\documentclass[12pt,reqno]{amsart}

\usepackage{amsfonts}
\usepackage{version}
\usepackage{amssymb}
\usepackage[all]{xy}
\setlength{\textheight}{20cm} \textwidth16cm \hoffset=-2truecm

\newcommand{\de}{\partial}
\newcommand{\R}{\mathbb R}

\newcommand{\al}{\alpha}

\newcommand{\Z}{\mathbb Z}
\newcommand{\C}{\mathbb C}

\newcommand{\B}{\mathbb B}

\newcommand{\Hol}{{\sf Hol}}
\newcommand{\Aut}{{\sf Aut}}
\newcommand{\D}{\mathbb D}

\newcommand{\la}{\langle}
\newcommand{\ra}{\rangle}

\newcommand{\N}{\mathbb N}

\newcommand{\Ha}{\mathbb H}

\def\v{\varphi}

\def\Re{{\sf Re}\,}
\def\Im{{\sf Im}\,}

\def\H{{\mathbb H}}

\newtheorem{theorem}{Theorem}[section]
\newtheorem{lemma}[theorem]{Lemma}
\newtheorem{proposition}[theorem]{Proposition}
\newtheorem{corollary}[theorem]{Corollary}

\theoremstyle{definition}
\newtheorem{definition}[theorem]{Definition}
\newtheorem{example}[theorem]{Example}

\theoremstyle{remark}
\newtheorem{remark}[theorem]{Remark}
\numberwithin{equation}{section}

\numberwithin{equation}{section}

%
%
%
%

\begin{document}
\title[Models]{Canonical Models for holomorphic iteration}
\author[L. Arosio]{Leandro Arosio}
\author[F. Bracci]{Filippo Bracci}
\address{L. Arosio, F. Bracci: Dipartimento Di Matematica\\
Universit\`{a} di Roma \textquotedblleft Tor Vergata\textquotedblright\ \\
Via Della Ricerca Scientifica 1, 00133 \\
Roma, Italy} \email{arosio@mat.uniroma2.it, fbracci@mat.uniroma2.it}\thanks{Supported by the ERC grant ``HEVO - Holomorphic Evolution Equations'' n. 277691}

\begin{abstract}
We construct canonical intertwining semi-models with Kobayashi hyperbolic base space for  holomorphic self-maps of complex manifolds which are univalent on some absorbing cocompact hyperbolic domain. In particular, in the unit ball  we solve the Valiron equation for hyperbolic univalent self-maps and for hyperbolic semigroups.
\end{abstract}

\subjclass[2000]{Primary 32H50; Secondary 39B12, 26A18}

\keywords{iteration theory; linear fractional models; dynamics in several complex variables}

\maketitle

\tableofcontents

\section{Introduction}

In 1884 K\"onigs \cite{Ko} proved that given a holomorphic self-map $f$ of the unit disc $\D$ such that $f(0)=0$ and $0<|f'(0)|<1$ there exists a unique holomorphic map $\sigma:\D \to \C$ satisfying the {\sl Schr\"{o}der equation} $$\sigma\circ f = f'(0) \sigma,$$ and such that $\sigma(0)=0$, $\sigma'(0)=1$. Notice that $\cup_{m\in \N}\, f'(0)^{-m}\sigma(\D)=\C$.
Of course the same holds if the self-map $f$ admits a fixed point $p\in \D$.

If the mapping $f$ has no fixed point in $\D$, then  by the Denjoy-Wolff theorem there exists a unique point $a\in \de \D$, called the {\sl Denjoy-Wolff point of $f$}, such that the sequence of iterates $\{f^n\}$ converges uniformly on compacta to the constant map $\D \ni z\mapsto a$. Moreover, the non-tangential limit of $f$ at $a$ is $a$ and the non-tangential limit of $f'$ at $a$ is a number $\lambda_f\in (0,1]$, called the {\sl dilation} of $f$. If $\lambda_f<1$, the map $f$ is called {\sl hyperbolic} and in 1931 Valiron \cite{Va}  proved that there exists a holomorphic map $\sigma:\D \to \Ha:=\{\zeta \in \C: \Im \zeta>0\}$ which satisfies  the Schr\"{o}der equation at the boundary
\[
\sigma \circ f=\frac{1}{\lambda_f} \sigma,
\]
also called the  {\sl Valiron equation}.
The solution $\sigma$  satisfies   $\cup_{m\in \N}\, \lambda_f^{-m}\sigma(\D)=\H$ and is  essentially unique in the sense that any other  holomorphic  solution is a positive multiple of $\sigma$ (see \cite[Prop. 2.4]{BP}).

In 1979 Pommerenke and Baker  \cite{Po1,BaPo} dealt with the {\sl parabolic case} $\lambda_f=1$, proving that in such a case the {\sl Abel equation}
\[
\sigma \circ f=\sigma + 1
\]
admits a holomorphic solution $\sigma:\D \to \C$.
For any solution $\sigma$ of the Abel equation the domain  $\cup_{m\in \N}\, (\sigma(\D)-m)$ is either the whole $\C$ or  is biholomorphic to $\D$, depending on whether $f$ is  zero-step or nonzero-step.  The uniqueness of the solutions of the Abel functional equations has been investigated in \cite{BP,CDP2,CDP,P}.

These three functional equations are examples of intertwining models, and since $f$ is intertwined with linear fractional maps, they are called {\sl linear fractional models}.
There is a geometric way of approaching the problem of intertwining models which was proposed  in 1981 by  Cowen \cite{Cowen}: given a holomorphic self-map $f$ of the unit disc, one wants to find a Riemann surface $\Omega$, an automorphism $\psi$ of $\Omega$ and a holomorphic mapping $\sigma\colon \D\to \Omega$ such that  the following diagram commutes:
$$\xymatrix{\D\ar[r]^{f}\ar[d]_{\sigma}& \D\ar[d]^{\sigma}\\
\Omega\ar[r]^{\psi}& \Omega.}$$
Notice that  one can always replace $\Omega$ with its smallest domain containing $\sigma(\D)$ on which $\psi$ is an automorphism, that is $\cup_{m\in \N}\, \psi^{-m}(\sigma(\D))$, hence we assume that  $\Omega=\cup_{m\in \N}\, \psi^{-m}(\sigma(\D)).$
Cowen exploited  a categorial construction (the tail space, also known as the abstract basin of attraction) to show that such a triple $(\Omega, \psi,\sigma)$ always exists, assuming that $f$ has no fixed point $p$ with $f'(p)=0$, and moreover the intertwining mapping $\sigma$ is  univalent on some  $f$-absorbing domain of $\D$ (that is an $f$-invariant domain which eventually contains every orbit of $f$). By
 the Riemann uniformization theorem $\Omega$ is then biholomorphic either to $\C$ or  to $\H$. The last step is to determine $\psi$, and Cowen shows that, if $f$ is not an elliptic automorphism of the disc,  there are only four possible cases:
 \begin{enumerate}
 \item $\Omega=\C$ and $\psi(z)=cz$ with $0<|c|<1$,
  \item $\Omega=\H$  and $\psi(z)= sz$ with $s\in \R, 0<s<1$,
 \item $\Omega=\H$ and  $\psi(z)=z\pm 1$,
  \item $\Omega=\C$ and  $\psi(z)=z+1$.
\end{enumerate}
In case (1) the self-map $f$ has a fixed point such that $f'(p)=c$. In case (2) the self-map $f$ is hyperbolic with dilation $\lambda_f=s$. Case (3) and (4) correspond to the nonzero-step and zero-step parabolic case.
Thus this approach allows  to treat the three previous functional equations in a unique framework.

In 1996 Bourdon and Shapiro \cite{BoSh} considered the case of holomorphic self-maps of the unit disc with no interior fixed points  having some regularity at the Denjoy-Wolff point. Assuming regularity at the Denjoy-Wolff point allows to obtain better knowledge of the intertwining map and of the shape of the corresponding image domain. Such results, aside an intrinsic interest, have been applied in many ways, for instance to study dynamics, commuting maps, semigroups and properties of the associated composition operators.

In higher dimension the situation is much more complicated. In the case of a holomorphic self-map $f$ of the unit ball (or more generally a holomorphic self-map of a taut manifold)  with the origin as a unique fixed point, the classical theory of Poincar\'e-Dulac can be applied (see, {\sl e.g.}, \cite{Ar,St,RR,C-Mc,Bi-Ge,BES}) and, for instance, if all eigenvalues have modulus less than one and in absence of resonances, one obtains a holomorphic solution $\sigma\colon \B^q\to \C^q$ of the Schr\"{o}der equation $$\sigma\circ f=d f_0 \circ \sigma,$$ such that $\sigma(0)=0$ and $d\sigma_0={\sf id}$. This gives   a linear intertwining model.

In the case of $f\in \Hol(\B^q,\B^q)$ with no interior fixed points, by a result of Herv\'e \cite{herve} (see also \cite{Mc,A}) there exists a unique point $a\in \de \B^q$, called the {\sl Denjoy-Wolff point} of $f$, such that the sequence of iterates of $f$ converges uniformly on compacta to the constant map $\B^q\ni z\mapsto a$. Moreover, the {\sl dilation} $\lambda_f\in (0,1]$ can be defined (see Section \ref{sec-ball}). Again, if $\lambda_f<1$ the map $f$ is called {\sl hyperbolic}, while it is called {\sl parabolic} if $\lambda_f=1$.
In \cite{BG} the second named author and Gentili proved that if $f$ is hyperbolic and has some regularity at the Denjoy-Wolff point $a\in\de \B^q$, then there exists a holomorphic mapping $\sigma\colon \B^q\to \C^q$ such that
$$ \sigma\circ f=df_a  \circ \sigma,$$
and $\sigma$ is  regular and essentially unique in the class of holomorphic maps having some regularity at $a$.
Such type of results, assuming  similar regularity conditions at the Denjoy-Wolff point of $f$, have later been generalized both in the hyperbolic and  parabolic case by Bayart \cite{bayart3}. Bayart  and Charpentier \cite{bayart2, bayart} also performed a deep study of  intertwining models for linear fractional self-maps of the unit ball, generalizing previous results of \cite{Bi-Br} and \cite{BCD}.

If $f\in \Hol(\B^q,\B^q)$ is hyperbolic with Denjoy-Wolff point $a\in \partial B^q$,
a holomorphic solution $\sigma\colon\B^q\to \Ha$ to the Valiron equation $$\sigma\circ f=\frac{1}{\lambda_f} \sigma$$ has been found in \cite{BGP} by the second named author, Gentili and Poggi-Corradini,  with the two following assumptions: 
\begin{enumerate}
\item  the  $K-\lim_{z\to a} \frac{1-\langle f(z),a\rangle}{1-\langle z,a\rangle}$ exists ($\langle \cdot, \cdot \rangle$ denotes the standard Hermitian product in $\C^q$ and $K\hbox{-}\lim$ denotes the limit in Kor\'anyi regions, see Section \ref{sec-ball}),
\item the mapping $f$ admits a  special orbit (see Definition \ref{restricted-special}).
\end{enumerate}
 In \cite{jury} the existence of a holomorphic solution is proved by Jury without assumption (2).

In all papers mentioned above, with the exception of Cowen's construction in dimension one, the intertwining map is defined via a limiting process using suitably normalized iterates of the original map. If the self-map has no fixed point, the estimates for the convergence of such a sequence are provided essentially by the Julia-Wolff-Carath\'eodory theorem in the unit disc, and by the Rudin's generalization of such a theorem in the unit ball. However, the Rudin's Julia-Wolff-Carath\'eodory theorem gives a good control only on the non-tangential normal direction and a holomorphic self-map of the ball may not even be differentiable at its Denjoy-Wolff point. Thus, in several variables, it seems to be hopeless to control the limiting process without assuming some regularity at the Denjoy-Wolff point.

In this paper we propose a categorial construction of  intertwining models for any holomorphic self-map which is univalent on an absorbing domain, without assuming any regularity condition at the Denjoy-Wolff point.
In particular, our results apply to univalent self-maps (such as the elements of a  continuous one-parameter semigroup of holomorphic self-maps). Here we state our results for univalent self-maps, referring the reader to Section \ref{sec-model} for details and the general case. Let $X$ be a complex manifold of dimension $q$, and let $f\in\Hol(X,X)$ be a univalent map. We define a {\sl semi-model} for $f$ as a triple $(\Omega, h, \psi)$, where the {\sl base space} $\Omega$ is a complex manifold of dimension $k\in \N$, $\psi\in {\sf Aut}(\Omega)$, and $h\in \Hol(X,\Omega)$, such that $h \circ f= \psi\circ h$ and $\cup_{m\in \N}\, \psi^{-m}(h(X))=\Omega$. If $h$ is univalent, we call the semi-model a {\sl model}. Our first result shows that models exist, are essentially unique and satisfy a universal property.

\begin{theorem}\label{main1}
Let $X$ be a complex manifold and let $f\in\Hol(X,X)$ be univalent. Then there exists a model $(\Omega, h, \psi)$ for $f$. Moreover, if $(\Lambda, \ell, \phi)$ is any semi-model for $f$ then there exists a surjective holomorphic map $g: \Omega\to \Lambda$ such that $\ell=g\circ h$ and $g\circ \psi= \phi\circ g$. In particular, if $(\Lambda, \ell, \phi)$ is a model ({\sl  i.e.}, $\ell$ is univalent), then $g$ is a biholomorphism.
\end{theorem}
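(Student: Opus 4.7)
The plan is to realize $\Omega$ as a direct limit (the ``abstract basin of attraction'' of $f$), generalizing Cowen's one-dimensional construction. Form the disjoint union $\bigsqcup_{n\in\N} X_n$ of copies of $X$ and mod out by the equivalence relation generated by $(x,n) \sim (f(x), n+1)$. Denote the quotient by $\Omega$ and write $p_n\colon X \to \Omega$ for the natural maps, so that $p_n = p_{n+1} \circ f$. Set $h := p_0$ and define the shift $\psi\colon\Omega\to\Omega$ by $\psi^{-1}(p_n(x)) := p_{n+1}(x)$; then $\psi \circ h = h \circ f$ and $\Omega = \bigcup_n \psi^{-n}(h(X)) = \bigcup_n p_n(X)$ hold by construction.

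The heart of the argument is to endow $\Omega$ with a complex manifold structure. Since $f$ is univalent, each $p_n$ is injective; and since $f$ is an open embedding, $p_n(X) = p_{n+1}(f(X))$ is open in $p_{n+1}(X)$, so inductively $p_n(X)$ is open in $p_m(X)$ and in $\Omega$ for every $m \geq n$. The bijections $p_n\colon X \to p_n(X)$ serve as charts, with transitions $p_m^{-1} \circ p_n = f^{m-n}$ (or its local inverse on the image of $f$) hence holomorphic. Hausdorffness is the delicate point: any two distinct classes can be brought to a common index in the form $p_n(x), p_n(y)$, and distinctness together with univalence of all iterates $f^k$ forces $x \neq y$; disjoint open neighborhoods $U \ni x$, $V \ni y$ in $X$ then yield disjoint open sets $p_n(U), p_n(V)$ in $\Omega$ by injectivity of $p_n$. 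Second countability follows since $\Omega$ is an increasing countable union of open subsets each biholomorphic to $X$, and univalence of $h = p_0$ is immediate.

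For the universal property, given a semi-model $(\Lambda, \ell, \phi)$, iterating $\ell \circ f = \phi \circ \ell$ yields $\ell \circ f^k = \phi^k \circ \ell$, so the holomorphic maps $\phi^{-n} \circ \ell \circ p_n^{-1}\colon p_n(X) \to \Lambda$ agree on the overlaps $p_n(X) \subseteq p_m(X)$ and paste to a holomorphic $g\colon\Omega \to \Lambda$ with $g \circ h = \ell$ and $g \circ \psi = \phi \circ g$. Surjectivity follows from $\bigcup_n \phi^{-n}(\ell(X)) = \bigcup_n g(p_n(X)) = g(\Omega) = \Lambda$. When $\ell$ is univalent, representing $\omega_1, \omega_2$ at a common index $n$ as $p_n(x), p_n(y)$, the equality $g(p_n(x)) = g(p_n(y))$ gives $\phi^{-n}(\ell(x)) = \phi^{-n}(\ell(y))$, hence $\ell(x) = \ell(y)$, hence $x = y$; so $g$ is injective. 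Since a univalent holomorphic map is an immersion, $\ell(X)$ is a $\dim X$-dimensional submanifold of $\Lambda$, and the coverage condition forces $\dim\Lambda = \dim X = \dim \Omega$; a bijective holomorphic map between equidimensional complex manifolds is a biholomorphism, completing the proof.

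The main obstacle I anticipate is the Hausdorffness verification for $\Omega$: this is the unique step where univalence of $f$ is used in an essential way, for without it the tail identifications can merge distinct classes and destroy the complex manifold structure. Once Hausdorffness and chart compatibility are in hand, both the existence of $g$ and the uniqueness up to biholomorphism follow formally from the universal property of the direct limit.
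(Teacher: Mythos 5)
Your construction is essentially identical to the paper's: the paper also builds $\Omega$ as the quotient of $X\times\N$ (the abstract basin of attraction), uses the injections $h_n$ as charts with transition maps given by iterates of $f$, verifies Hausdorffness via injectivity of the charts, and obtains the morphism $g$ by pasting the maps $\phi^{-n}\circ\ell\circ h_n^{-1}$ on the exhausting open sets, exactly as you do (the paper works slightly more generally with an $f$-absorbing domain $A$ in place of $X$, and deduces the final biholomorphism claim from the abstract uniqueness of models rather than by checking injectivity of $g$ directly). The only blemish is your assertion that a univalent holomorphic map is an immersion, which is false in general (e.g.\ $z\mapsto(z^2,z^3)$); the conclusion you need, namely $\dim\Lambda=\dim X$, is nevertheless correct and follows from the facts that an injective holomorphic map has generic rank equal to $\dim X$ (so $\dim\Lambda\geq\dim X$) while the coverage condition $\bigcup_n\phi^{-n}(\ell(X))=\Lambda$ rules out $\dim\Lambda>\dim X$.
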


Such a result {\sl \`a la Cowen} exploits the abstract basin of attraction of $f$, hence the manifold $\Omega$ is constructed as an abstract complex manifold. 
The problem with this approach in several complex variables is that there is no uniformization theorem available, and so even if $X=\B^q$, the complex structure of $\Omega$ and its automorphism group may  be very complicated.

However, in case $X=\B^q$ (or more  generally $X$ is any cocompact hyperbolic manifold), one can single out a special semi-model whose  base space is a possibly lower-dimensional ball $\B^k$, and which still gives interesting informations about $f$. This is what we do in Section   \ref{kob-semi}.
We start by noticing that $\Omega$ is the growing union of  domains biholomorphic to $\B^q$, and thus we can replace the uniformization theorem with a result of Fornaess and Sibony which describes this kind of manifolds. They prove the existence of a holomorphic submersion $r\colon \Omega\to \B^k$, where $0\leq k\leq q$, which gives a foliation $\mathcal{F}$ of $\Omega$ on whose leaves the Kobayashi pseudo-metric vanishes. Hence the automorphism $\psi$ of $\Omega$ defined in Theorem \ref{main1} preserves $\mathcal{F}$, and thus induces an automorphism $\tau$ of $\B^k$ which makes the following diagram commute:

$$\xymatrix{\B^q\ar[r]^{f}\ar[d]_{h}& \B^q\ar[d]^{h}\\
\Omega\ar[r]^{\psi}\ar[d]_{r}& \Omega\ar[d]^{r}\\
\B^k\ar[r]^{\tau}&\B^k.}$$

Define $\ell:=r\circ h$. Then $(\B^k,\ell, \tau)$ is a semi-model, and as long as Kobayashi pseudo-distance is concerned, it is indistinguishable from  the model $(\Omega,h,\psi)$: indeed the holomorphic submersion $r\colon \Omega\to \B^k$ is a pseudo-distance isometry.  This semi-model is universal in the following sense: all others  semi-models for $f$ with hyperbolic base space factor through it. We call such a semi-model the {\sl canonical Kobayashi hyperbolic semi-model}.

In Section \ref{hyper-balla} we describe the canonical Kobayashi hyperbolic semi-model of a hyperbolic univalent self-map, obtaining the following main result.

\begin{theorem}\label{iper-intro}
Let $f\in \Hol(\B^q,\B^q)$ be a hyperbolic univalent self-map with Denjoy-Wolff point $a\in \de \B^q$ and dilation $\lambda_f\in (0,1)$. Then there exists  $\ell\in \Hol(\B^q,\B^k)$, where $1\leq k\leq q$, and a hyperbolic automorphism $\tau\in {\sf Aut}(\B^k)$ with  Denjoy-Wolff point $b\in \de \B^q$ and dilation $\lambda_f$ such that
\begin{enumerate}
\item $\ell\circ f =\tau \circ \ell$,
\item  $\cup_{m\in \N}\, \tau^{-m}(\ell(\B^q))=\B^k$,
\item $K\hbox{-}\lim_{z\to a}\ell(z)=b$.
\end{enumerate}
Moreover, if $\Lambda$ is a Kobayashi hyperbolic manifold, $k\in {\sf Aut}(\Lambda)$ and $\varphi\in\Hol(\B^q, \Lambda)$ satisfies $k \circ f=\varphi\circ k$ and $\cup_{m\in \N}\,\varphi^{-m}(k(\B^q))=\Lambda$, then there exists $g\in \Hol(\B^k,\Lambda)$ surjective such that $g \circ \ell=k$ and $g\circ \tau=\varphi\circ g$.
\end{theorem}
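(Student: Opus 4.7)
The plan is to combine Theorem~\ref{main1} with the Fornaess--Sibony construction sketched in the paragraphs preceding the statement. First apply Theorem~\ref{main1} to $f$ to obtain the universal model $(\Omega,h,\psi)$ with $h\in\Hol(\B^q,\Omega)$ univalent. Since $\Omega=\cup_{m\in\N}\psi^{-m}(h(\B^q))$ is an increasing union of copies of $\B^q$, Fornaess--Sibony provides a holomorphic submersion $r:\Omega\to\B^k$, $0\leq k\leq q$, whose fibres form a foliation on which the Kobayashi pseudo-metric vanishes, and such that $r$ is a Kobayashi pseudo-distance isometry. Because $\psi$ is an automorphism, it preserves this foliation and descends to $\tau\in\Aut(\B^k)$ with $r\circ\psi=\tau\circ r$. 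Set $\ell:=r\circ h$. Then (1) follows from $\ell\circ f=r\circ h\circ f=r\circ\psi\circ h=\tau\circ r\circ h=\tau\circ\ell$, and (2) from applying the surjection $r$ to $\cup_m\psi^{-m}(h(\B^q))=\Omega$.

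The next step is to identify $\tau$ and rule out $k=0$. Recall that for a hyperbolic univalent $f$ the step
$$s(f):=\lim_{n\to\infty}\tfrac{1}{n}\kappa_{\B^q}(z_0,f^n(z_0))=-\tfrac12\log\lambda_f>0$$
is independent of $z_0\in\B^q$, where $\kappa_{\B^q}$ denotes the Kobayashi distance. Fix $z_0$. The map $\iota_M:z\mapsto\psi^{-M}(h(z))$ is a biholomorphism from $\B^q$ onto $\psi^{-M}(h(\B^q))$, and $h(z_0)=\iota_M(f^M(z_0))$, $\psi^n(h(z_0))=\iota_M(f^{M+n}(z_0))$. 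Letting $M\to\infty$ in the increasing union yields
$$\kappa_\Omega(h(z_0),\psi^n(h(z_0)))=\lim_{M\to\infty}\kappa_{\B^q}(f^M(z_0),f^{M+n}(z_0))=n\,s(f),$$
the last equality being a standard boundary computation for hyperbolic self-maps (the two points sit on nested horospheres at $a$ of shrinking horocyclic radius $\sim\lambda_f^M$, so their Kobayashi distance converges to $n\,s(f)$). Since $r$ is a pseudo-distance isometry, $\kappa_{\B^k}(\ell(z_0),\tau^n(\ell(z_0)))=n\,s(f)>0$, so $\tau$ cannot be elliptic or parabolic. Hence $k\geq 1$ and $\tau$ is hyperbolic with dilation $e^{-2s(f)}=\lambda_f$; let $b\in\de\B^k$ denote its Denjoy--Wolff point.

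For (3), apply Julia's lemma to $\ell:\B^q\to\B^k$ at $a$. From the step equality above, the Julia boundary quotient of $\ell$ at $a$ is finite, equal to $\lambda_f$, and is attained along the orbit $\{f^n(z_0)\}\to a$, whose images $\tau^n(\ell(z_0))$ tend to $b$; hence $\ell$ maps horospheres at $a$ into horospheres at $b$ with dilation $\lambda_f$. The orbits of a hyperbolic self-map of $\B^q$ lie eventually inside a Koranyi region at $a$, so $\ell$ admits a restricted limit along a distinguished sequence approaching $a$. A Lindel\"of-type theorem for bounded holomorphic maps of the ball then upgrades this to $K\hbox{-}\lim_{z\to a}\ell(z)=b$.

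For the universal property, let $(\Lambda,\mu,\varphi)$ be any semi-model with $\Lambda$ Kobayashi hyperbolic (we rename the paper's data to avoid clash with the integer $k$). By Theorem~\ref{main1} there is a surjective $g_0\in\Hol(\Omega,\Lambda)$ with $\mu=g_0\circ h$ and $g_0\circ\psi=\varphi\circ g_0$. Since each leaf of the Fornaess--Sibony foliation carries vanishing Kobayashi pseudo-metric and $\Lambda$ is Kobayashi hyperbolic, the distance-decreasing property forces $g_0$ to be constant on each leaf; hence $g_0$ factors uniquely as $g_0=g\circ r$ for some $g\in\Hol(\B^k,\Lambda)$, which satisfies $g\circ\ell=\mu$ and $g\circ\tau=\varphi\circ g$. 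The main obstacle will be (3): establishing the exact Julia quotient $\lambda_f$ and extending orbit convergence to a full Koranyi limit is the delicate analytic point, while everything else is a formal consequence of Theorem~\ref{main1}, Fornaess--Sibony, and the step calculation.
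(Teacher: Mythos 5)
Your overall architecture is exactly the paper's (model via the abstract basin, Forn\ae ss--Sibony submersion $r$, induced automorphism $\tau$, a step/rate computation to identify $\tau$, orbit plus a Lindel\"of argument for the $K$-limit), but two of your key steps are not correct as stated. First, the identity $\kappa_\Omega(h(z_0),\psi^n(h(z_0)))=\lim_{M\to\infty}k_{\B^q}(f^M(z_0),f^{M+n}(z_0))=n\,s(f)$ is false in general: already for a hyperbolic automorphism $f$ of $\D$ and a point $z_0$ off its axis one has $k_\D(f^M(z_0),f^{M+n}(z_0))=k_\D(z_0,f^n(z_0))$, which is strictly larger than $n$ times the translation length. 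The ``nested horospheres'' remark only gives a lower bound for $k_{\B^q}(0,f^M(0))$, not the distance between consecutive orbit points; indeed the paper's Proposition \ref{limit at DW} shows the one-step limit depends on the slope of approach and generally exceeds $-\log\lambda_f$. What is true, and what the paper uses, is the averaged (Fekete/subadditivity) version: the divergence rate $c(f)=\lim_m\frac1m k_{\B^q}(x,f^m(x))=\lim_m\frac{s_m(x)}{m}$ equals $-\log\lambda_f$ (lower bound from Julia's lemma, upper bound from $\liminf(1-\|f^m(0)\|)^{1/m}\ge\lambda_f$), it is preserved by the isometric submersion ($c(\tau)=c(f)$), and for an automorphism of $\B^k$ it equals $-\log$ of the dilation, which is all you need to force $k\ge 1$ and $\tau$ hyperbolic with dilation $\lambda_f$. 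Your conclusion is right, but the exact identity you lean on must be replaced by this asymptotic statement.

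Second, for item (3) the phrase ``a Lindel\"of-type theorem for bounded holomorphic maps of the ball then upgrades this'' hides the real difficulty. The classical Lindel\"of/\v{C}irka theorems require convergence along a \emph{continuous} special curve and deliver only a restricted $K$-limit (admissible limit), whereas here you only have convergence along the \emph{sequence} $\{f^n(p)\}$ and the theorem demands a full $K$-limit to a boundary point $b\in\de\B^k$. No off-the-shelf result does this; the paper proves a new sequential Lindel\"of theorem (Theorem \ref{circone}) precisely for this step, interpolating the orbit by a piecewise-affine curve and using the convexity of $k_{\B^q}$, with hypotheses (orbit eventually in a Kor\'anyi region, hence $k_{\B^q}(z_n,\langle z_n,a\rangle a)$ bounded, and $k_{\B^q}(z_n,z_{n+1})\le k_{\B^q}(p,f(p))$) that you correctly identify. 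Moreover your claim that the Julia quotient of $\ell$ at $a$ is finite and equal to $\lambda_f$ is unjustified: along the orbit $1-\|\tau^n(\ell(p))\|\asymp\lambda_f^n$ while $1-\|f^n(p)\|$ can decay like $e^{-n k_{\B^q}(p,f(p))}$, so the quotient need not stay bounded; the paper's argument avoids Julia's lemma for $\ell$ altogether. Minor point: your normalization $s(f)=-\frac12\log\lambda_f$ is internally consistent but conflicts with the paper's convention $k_{\B^q}(0,z)=\log\frac{1+\|z\|}{1-\|z\|}$, under which $c(f)=-\log\lambda_f$.
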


The number $k$ in Theorem \ref{iper-intro} can be computed in terms of the dynamics of $f$. Indeed, $k$ is the rank of $\lim_{m\to \infty} (f^m)^*\kappa_{\B^q}$, where $\kappa_{\B^q}$ is the Kobayashi infinitesimal metric of $\B^q$ (see Lemma \ref{climbhazard}). 

Composing the mapping $\ell\colon \B^q\to \B^k$ with a suitable projection to $\H$, we obtain a holomorphic solution to the Valiron equation in several variables.

\begin{corollary}
Let $f\in \Hol(\B^q, \B^q)$  be a univalent hyperbolic self-map with Denjoy-Wolff point $a\in \de \B^q$ and dilation $\lambda_f\in (0,1)$.  Then there exists a
holomorphic  solution  $\Theta\colon \B^q\to\H$ to the Valiron equation
\begin{equation}
\Theta\circ f=\frac{1}{\lambda_f}\Theta
\end{equation}  which satisfies $K\hbox{-}\lim_{z\to a}\Theta(z)=\infty$ and $\cup_{n\geq 0}\, \lambda_f^n(\Theta(\B^q))=\H.$
\end{corollary}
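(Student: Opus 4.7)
The plan is to post-compose the canonical Kobayashi hyperbolic semi-model supplied by Theorem \ref{iper-intro} with a holomorphic function $\B^k \to \H$ that conjugates the intertwining automorphism $\tau$ to the dilation $\zeta \mapsto \zeta/\lambda_f$ on the upper half-plane.

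First I apply Theorem \ref{iper-intro} to obtain $\ell \in \Hol(\B^q,\B^k)$ and a hyperbolic automorphism $\tau \in \Aut(\B^k)$ with Denjoy-Wolff point $b \in \partial \B^k$ and dilation $\lambda_f$ satisfying $\ell \circ f = \tau \circ \ell$, $\bigcup_{m \in \N} \tau^{-m}(\ell(\B^q)) = \B^k$, and $K\hbox{-}\lim_{z\to a}\ell(z)=b$. Let $b' \in \partial \B^k$ be the unique repelling boundary fixed point of $\tau$, and pick a Cayley-type biholomorphism $C \colon \B^k \to \mathcal{S}^k$ onto the Siegel upper half-space
\begin{equation*}
\mathcal{S}^k := \{(w_1, w') \in \C \times \C^{k-1} : \Im w_1 > |w'|^2\}
\end{equation*}
that sends $b \mapsto \infty$ and $b' \mapsto 0$.

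Then $\tilde\tau := C \tau C^{-1}$ is a hyperbolic automorphism of $\mathcal{S}^k$ fixing $0$ and $\infty$, so by the standard normal form for such elements of $\Aut(\mathcal{S}^k)$,
\begin{equation*}
\tilde\tau(w_1, w') = \left(\frac{w_1}{\lambda_f},\, \lambda_f^{-1/2} U w'\right)
\end{equation*}
for some unitary $(k-1) \times (k-1)$ matrix $U$, the scaling $1/\lambda_f$ being forced by the requirement that the boundary dilation of $\tilde\tau$ at $\infty$ equal $\lambda_f$ (which is preserved under conjugation by $C$). The projection $p \colon \mathcal{S}^k \to \H$, $p(w_1, w') := w_1$, is well defined since $\Im w_1 > |w'|^2 \ge 0$, and it satisfies $p \circ \tilde\tau = p / \lambda_f$. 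Setting $\Theta := p \circ C \circ \ell \in \Hol(\B^q, \H)$, the Valiron equation
\begin{equation*}
\Theta \circ f = p \circ C \circ \tau \circ \ell = p \circ \tilde\tau \circ C \circ \ell = \frac{1}{\lambda_f}\Theta
\end{equation*}
is then immediate.

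For the range, iterating $p \circ \tilde\tau^{-1} = \lambda_f\, p$ yields $p \circ \tilde\tau^{-n} = \lambda_f^n\, p$, so using the equivariance of $C$ and property (2) of Theorem \ref{iper-intro},
\begin{equation*}
\bigcup_{n \geq 0} \lambda_f^n \Theta(\B^q) = p\!\left(C\!\left(\bigcup_{n \geq 0} \tau^{-n}(\ell(\B^q))\right)\right) = p(C(\B^k)) = p(\mathcal{S}^k) = \H.
\end{equation*}
Finally, $p \circ C$ extends continuously to $\overline{\B^k}$ with value $\infty$ at $b$, so property (3) of Theorem \ref{iper-intro} forces $K\hbox{-}\lim_{z \to a} \Theta(z) = \infty$. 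The only nontrivial step is establishing the normal form of $\tilde\tau$, which is a classical computation in $\Aut(\mathcal{S}^k)$: automorphisms fixing both $0$ and $\infty$ decompose as Heisenberg rotations composed with anisotropic dilations $(w_1, w') \mapsto (r w_1, r^{1/2} w')$, and invariance of the boundary dilation under conjugation pins down $r = 1/\lambda_f$.
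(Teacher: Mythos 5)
Your construction is correct and is essentially the paper's own proof: both pass to the canonical Kobayashi hyperbolic semi-model of Theorem \ref{iper-intro}, transfer it to the Siegel half-space where the hyperbolic automorphism takes the normal form $(w_1,w')\mapsto(w_1/\lambda_f,\lambda_f^{-1/2}Uw')$, and set $\Theta$ equal to the first coordinate of the transferred intertwining map. The only (harmless) divergence is in verifying $\bigcup_{n\geq 0}\lambda_f^n\Theta(\B^q)=\H$: you compute it directly from $p\circ\tilde\tau^{-n}=\lambda_f^n p$ and property (2), whereas the paper establishes the stronger statement that \emph{every} solution fills $\H$ via the universal property and a theorem of Heins.
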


Aside the categorial construction of the canonical Kobayashi hyperbolic semi-model, the main difficulty is to prove that the automorphism $\tau\in {\sf Aut}(\B^k)$ that one constructs abstractly is indeed  hyperbolic with dilation $\lambda_f$. The problem here is that it is not clear how the dilation $\lambda_f$  behaves  under a holomorphic semiconjugation of the map $f$.
We overcome this difficulty by introducing in Section \ref{step} a dynamical invariant that we call the {\sl divergence rate}, which generalizes the dilation and is defined in terms of the Kobayashi pseudo-distance.
For a given holomorphic self-map $f$ of a complex manifold $X$, the divergence rate is the nonnegative real number defined as $$c(f):=\lim_{m\to\infty} \frac{k_X(x,f^m(x))}{m},$$ where  $x$ is any point in $X$ and $k_X$ denotes the Kobayashi pseudo-distance.  Roughly speaking, if $X$ is hyperbolic,  the number $c(f)$ measures the rate at which the orbits of $f$ leave the compact sets. In particular, if the sequence of iterates is not compactly divergent, then we have $c(f)=0$. We prove that if  $X=\B^q$ and $f$ has no fixed point,  then $c(f)=-\log \lambda_f$. The divergence rate is well-behaved with respect to holomorphic semiconjugations: if  $g\in \Hol(Y,Y)$  and there exists $h\colon X\to Y$ such that   $h\circ f=g\circ h$, then $c(f)\geq c(g)$. We also show that for the model $(\Omega, h,\psi)$   the inequality becomes an equality, that is $c(f)=c(\psi)$. Since
the model and the canonical Kobayashi hyperbolic semi-model  $(\B^k,\ell, \tau)$  are isometric with respect to the Kobayashi pseudo-distance, we finally have  $c(f)=c(\tau)$.

In order to get the regularity of the intertwining map $\ell$ at the Denjoy-Wolff point of $f$, we prove the  following generalization of the Lindel\"of theorem (see Section \ref{sec-lindelof}), which seems to be interesting by its own and not known even in dimension one.

\begin{theorem}
Let $k,q\in \N$.  Let $h:\B^q\to \B^k$ be holomorphic. Suppose that $\{z_m\}\subset \B^q$ is a sequence converging to a point  $a\in \de \B^q$.  Assume that  the sequences $ \{k_{\B^q}(z_m, z_{m+1})\}$ and $\{k_{\B^q}(z_m, \langle z_m, a\rangle a)\}$ are bounded.
If $\lim_{m \to \infty}h(z_m)=b\in \de \B^k$ then $K\hbox{-}\lim_{z\to a}h(z)=b$.
\end{theorem}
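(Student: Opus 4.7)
Fix $M>0$ and take an arbitrary Kor\'anyi-approach sequence $\{w_n\}\subset\B^q$ with $w_n\to a$ and $k_{\B^q}(w_n,\langle w_n,a\rangle a)\leq M$; it suffices to show $h(w_n)\to b$. The idea is to pair each $w_n$ with a point $z_{m_n}$ of the given sequence lying at uniformly bounded Kobayashi distance from $w_n$, and then transfer the convergence through $h$, using that $h$ is Kobayashi-contracting and that Kobayashi balls in $\B^k$ of fixed radius centred at points near $\partial\B^k$ shrink to zero Euclidean diameter.

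\textbf{Step 1 (matching heights).} Write $\pi(z)=\langle z,a\rangle a$ and $\alpha_m=\langle z_m,a\rangle\in\D$. Since $\pi$ realizes $\C a\cap\B^q$ as a complex-geodesic retract of $\B^q$, it is $1$-Lipschitz for the Kobayashi pseudo-distance, so $k_{\D}(\alpha_m,\alpha_{m+1})\leq k_{\B^q}(z_m,z_{m+1})\leq C$. Combined with $\alpha_m\to 1$, this forces the ratio $(1-|\alpha_{m+1}|)/(1-|\alpha_m|)$ to lie eventually in a fixed compact subset of $(0,\infty)$. Hence for every $n$ one can choose an index $m_n$ with $1-|\alpha_{m_n}|\asymp 1-|\langle w_n,a\rangle|$, and $m_n\to\infty$.

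\textbf{Step 2 (bounded Kobayashi distance).} The triangle inequality gives
\[
k_{\B^q}(z_{m_n},w_n)\leq R + k_{\B^q}(\pi(z_{m_n}),\pi(w_n)) + M,
\]
and since $\C a\cap\B^q$ is a complex geodesic, the middle term equals $k_\D(\alpha_{m_n},\langle w_n,a\rangle)$. Because $w_n$ lies in a standard Kor\'anyi region at $a$, the projection $\langle w_n,a\rangle$ tends to $1$ nontangentially in $\D$, and a similar angular control for $\alpha_{m_n}$ is expected to follow from the combination of the bounded-step hypothesis and the Kor\'anyi-type bound on $\{z_m\}$; granting this, two points in $\D$ approaching $1$ nontangentially with comparable $1-|\cdot|$ have bounded hyperbolic distance by a direct pseudohyperbolic computation, yielding $k_{\B^q}(z_{m_n},w_n)\leq C''$ uniformly in $n$.

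\textbf{Step 3 (transfer) and main obstacle.} Applying the distance-decreasing property of $h$ gives $k_{\B^k}(h(z_{m_n}),h(w_n))\leq C''$; since $h(z_{m_n})\to b\in\partial\B^k$ and a Kobayashi ball of fixed radius in $\B^k$ centred at $v$ has Euclidean diameter of order $(1-|v|^2)^{1/2}$ as $|v|\to 1$, we conclude $h(w_n)\to b$, whence the $K$-limit statement. The delicate point is the angular control in Step 2: the Kor\'anyi-type hypothesis $k_{\B^q}(z_m,\pi(z_m))\leq R$ only constrains the transverse deviation from the axis and does not by itself force the nontangential approach of $\alpha_m$ to $1$ in $\D$; establishing this seems to require exploiting more carefully that $h(z_m)\to b\in\partial\B^k$, e.g.\ via Julia-type horosphere estimates, in conjunction with the boundedness of $k_{\B^q}(z_m,z_{m+1})$, which is the combination that rules out a purely tangential approach.
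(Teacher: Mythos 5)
Your plan founders exactly where you say it does, and the obstacle you flag in Step~2 is a genuine gap, not a technicality. The hypotheses of the theorem do \emph{not} imply that $\alpha_m=\langle z_m,a\rangle$ tends to $1$ non-tangentially in $\D$: the bound $k_{\B^q}(z_m,\pi(z_m))\leq C$ controls only the transverse deviation of $z_m$ from the slice $\D a$, and the bounded-step condition $k_{\B^q}(z_m,z_{m+1})\leq C$ is compatible with $\alpha_m\to 1$ tangentially (the sequence is not assumed restricted). If $\alpha_{m_n}$ approaches tangentially while $\langle w_n,a\rangle$ approaches inside a Stolz angle, then even with $1-|\alpha_{m_n}|\asymp 1-|\langle w_n,a\rangle|$ the hyperbolic distance $k_\D(\alpha_{m_n},\langle w_n,a\rangle)$ is unbounded, so your matching argument cannot produce the uniform bound $k_{\B^q}(z_{m_n},w_n)\leq C''$. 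Your proposed repair (extracting angular control from $h(z_m)\to b$ via Julia-type estimates) is not carried out and does not obviously work, since $h$ and $b$ give no information about the geometry of $\{z_m\}$ inside $\B^q$. Steps~1 and~3 are fine (the comparability of consecutive heights, the contraction under $h$, and the shrinking of Kobayashi balls near $\de\B^k$), but the proof is incomplete without Step~2.

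The paper's proof sidesteps the need for any angular control on $\{z_m\}$ by a different mechanism. It interpolates the sequence by the piecewise affine curve $\gamma(t)=(1-[t])z_n+[t]z_{n+1}$ and uses the convexity of $k_{\B^q}$ to show both that $h(\gamma(t))\to b$ (from the bounded-step hypothesis) and that the projected curve $\beta(t)=\pi(\gamma(t))\subset \D a$ satisfies $k_{\B^q}(\gamma(t),\beta(t))\leq C$, whence $h(\beta(t))\to b$. The decisive point is then the classical one-variable Lindel\"of theorem applied to the slice map $\D\ni\zeta\mapsto h(\zeta a)$: that theorem accepts an \emph{arbitrary} continuous curve in $\D$ ending at $1$, tangential or not, and upgrades the limit along it to a non-tangential limit. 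Only after this upgrade does one compare a Kor\'anyi sequence $\{w_n\}$ with its projection $\pi(w_n)$ (which \emph{is} non-tangential, by Lemma~\ref{BGP}) to conclude. In other words, the angular control you are missing is manufactured on the slice by Lindel\"of's theorem rather than extracted from the hypotheses; this is the missing idea in your argument.
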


Another generalization of the previous Lindel\"of-type result in the case $b\in \B^k$ (with more assumptions  on the sequence $\{z_m\}$) is contained in Theorem \ref{circone2}.

Results similar to Theorem \ref{iper-intro} are obtained in the elliptic case (see Proposition \ref{notdivergent}) and in the parabolic case (see Section \ref{sec-parabolic}).

As we said, if  $f$ is semiconjugate to $g$, then $c(f)\geq c(g)$ and equality holds for the canonical Kobayashi hyperbolic semi-model. It is then a natural question whether  equality has to hold also for non canonical Kobayashi hyperbolic semi-models. In Section \ref{semim},  among other results on  semi-models with Kobayashi hyperbolic base space,  we obtain a negative  answer (see Proposition \ref{controes}), indeed we prove that the Abel equation can be always solved for hyperbolic self-maps of the ball.

\begin{theorem}
Let $f\in \Hol (\B^q,\B^q)$ be univalent and hyperbolic with Denjoy-Wolff point $p\in \de \B^q$ and dilation $\lambda_f$. Then there exists $\theta: \B^q \to \H$ holomorphic such that
  \begin{equation}
  \theta \circ f=\theta+1.
  \end{equation}
   Moreover, $\bigcup_{m\in \N}(\theta(\B^q)-m)=\H$.
\end{theorem}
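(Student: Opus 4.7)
My plan is to build the Abel semi-model by post-composing the canonical Kobayashi hyperbolic semi-model of $f$ with a holomorphic Abel map for the resulting hyperbolic automorphism. By Theorem \ref{iper-intro} there exist $\ell\in \Hol(\B^q,\B^k)$ and a hyperbolic automorphism $\tau\in \Aut(\B^k)$ with dilation $\lambda_f$ such that $\ell\circ f=\tau\circ \ell$ and $\bigcup_{m\in\N}\tau^{-m}(\ell(\B^q))=\B^k$. It therefore suffices to construct $\theta_0\in \Hol(\B^k,\H)$ with $\theta_0\circ \tau=\theta_0+1$ and $\theta_0(\B^k)=\H$; setting $\theta:=\theta_0\circ \ell$ then gives the Abel equation for $f$ and
\[
\bigcup_{m\in \N}(\theta(\B^q)-m)=\theta_0\Bigl(\bigcup_{m\in \N}\tau^{-m}(\ell(\B^q))\Bigr)=\theta_0(\B^k)=\H.
\]

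For the construction of $\theta_0$, I put $\tau$ in standard Siegel form: identify $\B^k$ with the Siegel upper half-space $\{(w_0,w')\in \C\times \C^{k-1} : \Im w_0>|w'|^2\}$ with the Denjoy-Wolff point of $\tau$ at infinity, so that $\tau(w_0,w')=(\mu w_0,\sqrt{\mu}\,Uw')$ with $\mu=1/\lambda_f>1$ and $U\in U(k-1)$. A particular Abel solution is then
\[
\theta_0^{(1)}(w_0,w'):=\frac{\log w_0}{\log \mu}
\]
(principal branch of $\log$ on $\H$), which satisfies $\theta_0^{(1)}\circ \tau=\theta_0^{(1)}+1$ but maps only into the horizontal strip $\{0<\Im \zeta<\pi/\log \mu\}$.

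To enlarge the image to all of $\H$, I modify by a $\tau$-invariant holomorphic correction. Any holomorphic $\tau$-invariant $\eta\colon \B^k\to \D$ composed with a biholomorphism $F\colon \D\to \H$ yields a holomorphic $\tau$-invariant map $F\circ \eta\colon \B^k\to \H$; setting $\theta_0:=\theta_0^{(1)}+F\circ \eta$ preserves the Abel equation (since $\eta\circ \tau=\eta$) and keeps $\Im \theta_0>0$. For $k\ge 2$ an invariant $\eta$ is supplied by monomials of the shape $w'^{\alpha}/w_0^{|\alpha|/2}$ formed from components of $w'$ along eigenvectors of $U$ whose eigenvalues combine to $1$ (e.g.\ $w_1^2/w_0$ when $U$ fixes a coordinate direction); for general $U$ one constructs $\eta$ as a holomorphic function on the quotient complex manifold $\B^k/\langle \tau\rangle$. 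Varying $w_0\in \H$ and $\eta\in \D$ independently, the joint image of $\theta_0$ fills $\H$: the real part $\log |w_0|/\log\mu + \Re F(\eta)$ sweeps out $\R$ as $|w_0|\in(0,\infty)$, while the imaginary part $\arg w_0/\log\mu + \Im F(\eta)$ attains every positive value as $\Im F(\eta)$ ranges over $(0,\infty)$.

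The main obstacle is the construction of the invariant $\eta$ for arbitrary $U\in U(k-1)$, particularly when $U$ is an irrational rotation and the natural monomial invariants all degenerate to constants; and, correspondingly, the verification that the enlargement really produces a surjection onto $\H$ rather than some proper subdomain. I expect this to be resolved by invoking the abstract basin $\Omega$ from Theorem \ref{main1} in place of $\B^k$, whose quotient $\Omega/\langle \psi\rangle$ is a noncompact complex manifold with sufficiently rich holomorphic function theory to produce the required invariant (for instance, from a meromorphic function with a prescribed pole on the boundary of a fundamental domain), after which the same additive decomposition $\theta_0=(\text{log-type})+(\text{invariant})$ succeeds.
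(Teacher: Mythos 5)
Your reduction to the canonical hyperbolic semi-model and the computation of $\bigcup_{m}(\theta(\B^q)-m)$ are fine, but the construction of $\theta_0$ has a genuine gap, and it is the heart of the matter. The ansatz $\theta_0=\theta_0^{(1)}+F\circ\eta$ does preserve the Abel equation and maps into $\H$, but your surjectivity argument treats $w_0$ and $\eta(w)$ as independent parameters, which they are not: $\eta$ is a function of $w=(w_0,w')$, so the image of $\theta_0$ is $\{\theta_0^{(1)}(w)+F(\eta(w))\}$ and not the Minkowski sum of the two images. If $\eta$ degenerates to a constant (as your monomial invariants do for an irrational rotation $U$, and as your recipe produces nothing at all when $k=1$, where there is no $w'$), then $\theta_0$ maps into a translate of the strip. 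Even for a genuinely nonconstant invariant $\eta$ (which does exist, since $\tau$-invariant functions are just functions on the quotient $\B^k/\langle\tau\rangle$, a perfectly good hyperbolic manifold --- so existence is not the real obstacle), there is no reason the correlated sum should fill $\H$: in the $k=1$ picture you are asking for a $1$-periodic holomorphic correction on a strip, \emph{constrained to take values in $\H$}, whose sum with the identity is onto $\H$, and that is precisely the unproved point. The appeal to the abstract basin $\Omega$ at the end does not address it.

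The paper takes a different route that avoids both obstacles. It first reduces all the way to dimension one: by Corollary \ref{valiron} there is $\Theta\in\Hol(\B^q,\H)$ with $\Theta\circ f=\lambda_f^{-1}\Theta$ and $\bigcup_{n}\lambda_f^{n}\Theta(\B^q)=\H$, so it suffices to produce a \emph{surjective} $\psi\in\Hol(\H,\H)$ with $\psi(\lambda w)=\psi(w)+1$, where $\lambda=1/\lambda_f$. This $\psi$ is obtained by covering space theory rather than by an additive correction: $\H/\langle w\mapsto\lambda w\rangle$ is an annulus $A$ and $\H/\langle w\mapsto w+1\rangle$ is the punctured disc $\D^\ast$; one constructs explicitly (a conformal map of $A$ onto a suitable slit domain, squaring, then a Riemann map) a surjective holomorphic $\tilde{\psi}\colon A\to\D^\ast$ inducing the identity on $\pi_1$, and lifts $\tilde{\psi}$ to $\psi\colon\H\to\H$. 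Surjectivity of $\psi$ is then inherited from that of $\tilde{\psi}$ together with the relation $\psi(\lambda^m w)=\psi(w)+m$ for all $m\in\Z$, so no ``independence'' argument is needed. In effect the paper allows the periodic correction to $\log w/\log\lambda$ to take values in all of $\C$ and obtains surjectivity for free from the quotient map; your insistence on an $\H$-valued correction is an unnecessary restriction that leaves the surjectivity claim unsupported.
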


In Section \ref{sec-ipermani} we introduce the {\sl hyperbolic self-maps} of a complex manifold $X$, namely  $f\in \Hol(X,X)$ such that $c(f)>0$. We generalize Theorem \ref{iper-intro} to such mappings.

Finally, in Section \ref{sec-semigroup}, the results are adapted to the case of continuous one-parameter semigroups of holomorphic self-maps of a complex manifold, and mainly of the unit ball.

\section{Hyperbolic steps and divergence rate}\label{step}
Let $X$ be a  complex manifold, and let $\Hol(X,X)$ denote the set of holomorphic self-maps of $X$. If $f\in \Hol(X,X)$, for $m\in \N$, we denote by $f^m:= f^{m-1}\circ f$ and $f^0={\sf id}$. Let also $k_X$ denote the Kobayashi pseudo-distance on $X$.
\begin{definition}
Let $X$ be a  complex manifold and $f\in \Hol(X,X)$. For $x\in X$ and $m\in \N$ we denote by
\[
s_m(x):=  \lim_{n\to\infty}k_X(f^{n}(x),f^{n+m}(x))
\]
the {\sl (hyperbolic)  $m$-step} of $f$ at $x$. When no confusion can arise we will write $s(x)$ instead of $s_1(x)$.
\end{definition}
\begin{remark}\label{decrescesm}
The number $s_m(x)$ is well-defined since $$k_X(f^{n+1}(x),f^{n+1+m}(x))= k_X(f(f^{n}(x)),f(f^{n+m}(x)))\leq  k_X(f^{n}(x),f^{n+m}(x))$$ for all $n\geq 0$.
\end{remark}

The following well-known lemma is due to Fekete.
\begin{lemma}
Let $(a_m)_{m\in \N}$ be a subadditive sequence of real numbers. Then  $\lim_{m\to\infty} \frac{a_m}{m}$ exists and
$$\lim_{m\to\infty} \frac{a_m}{m}=\inf_{m\in \N}\frac{a_m}{m}.$$
\end{lemma}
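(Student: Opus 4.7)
The plan is to prove this classical Fekete-type statement by the standard division-with-remainder trick, showing that $\limsup_{m\to\infty} a_m/m \le L$ where $L := \inf_{m\in \N} a_m/m \in [-\infty, +\infty)$.

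First I would observe that the inequality $\liminf_{m\to\infty} a_m/m \ge L$ is immediate from the definition of infimum, since every term of the sequence $a_m/m$ is bounded below by $L$. So the content is the reverse inequality $\limsup_{m\to\infty} a_m/m \le L$, from which the existence of the limit and its value $L$ follow at once.

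For the main step, fix $n \in \N$ and, given an arbitrary $m \in \N$ with $m > n$, write $m = qn + r$ with $q \in \N$ and $0 \le r < n$. Iterating the subadditivity inequality $a_{j+k} \le a_j + a_k$ gives
\[
a_m = a_{qn+r} \le q\, a_n + a_r,
\]
with the convention that the $a_r$ term is dropped when $r = 0$ (handled by noting $a_{qn}\le q a_n$ directly). Dividing by $m$ yields
\[
\frac{a_m}{m} \le \frac{q n}{m}\cdot\frac{a_n}{n} + \frac{a_r}{m}.
\]
As $m \to \infty$ (with $n$ fixed), $qn/m \to 1$ and, since $r$ stays in the finite set $\{0,1,\dots,n-1\}$, the term $a_r/m \to 0$. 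Therefore $\limsup_{m\to\infty} a_m/m \le a_n/n$. Since this holds for every $n$, taking the infimum over $n$ gives $\limsup_{m\to\infty} a_m/m \le L$, which combined with the trivial lower bound proves the claim.

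The only subtlety I foresee is the case $L = -\infty$, but it is handled by the same argument read as follows: for any $M > 0$, pick $n$ with $a_n/n < -M$, and the inequality above forces $\limsup a_m/m \le -M$; letting $M \to \infty$ gives $\lim a_m/m = -\infty = L$. No step looks like a genuine obstacle; the argument is essentially bookkeeping once subadditivity is used to compare $a_m$ to a multiple of $a_n$.
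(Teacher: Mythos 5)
Your proof is correct and is the standard argument for Fekete's lemma (division with remainder $m=qn+r$, iterate subadditivity, let $m\to\infty$ with $n$ fixed, then take the infimum over $n$); the paper itself states the lemma without proof, citing it as well known, so there is no alternative argument to compare against. Your handling of the two potential pitfalls --- the $r=0$ case and the possibility $\inf_m a_m/m=-\infty$ --- is also correct, so nothing is missing.
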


\begin{lemma}\label{subadd}
For all $x\in X$, the sequences $(k_X(x,f^m(x)))$ and $(s_m(x))$    are subadditive, and therefore the limits $$\lim_{m\to\infty} \frac{k_X(y,f^m(x))}{m} \quad\mbox{and}\quad \lim_{m\to\infty} \frac{s_m(x)}{m}$$ exist.
\end{lemma}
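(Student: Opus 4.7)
The plan is to verify subadditivity of both sequences directly from the triangle inequality for $k_X$ together with the fundamental contraction property of holomorphic maps ($k_X(f(z),f(w))\leq k_X(z,w)$ for every $f\in\Hol(X,X)$), and then invoke the preceding Fekete lemma. A minor wrinkle is that the first stated limit uses an arbitrary basepoint $y$, while the natural subadditive sequence is $(k_X(x,f^m(x)))$ with basepoint $x$; this is handled by a standard triangle inequality argument showing that the choice of basepoint does not affect the limit.

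First I would handle $(k_X(x,f^m(x)))_{m\in\N}$. For any $m,n\in\N$, the triangle inequality gives
\[
k_X(x,f^{m+n}(x)) \leq k_X(x,f^m(x)) + k_X(f^m(x),f^{m+n}(x)),
\]
and by the Kobayashi contraction applied to $f^m$,
\[
k_X(f^m(x),f^{m+n}(x)) = k_X(f^m(x),f^m(f^n(x))) \leq k_X(x,f^n(x)).
\]
Hence $(k_X(x,f^m(x)))_{m\in\N}$ is subadditive, and Fekete's lemma yields existence of $\lim_{m\to\infty}\frac{k_X(x,f^m(x))}{m}$. For a general basepoint $y\in X$, the triangle inequality gives
\[
\bigl|k_X(y,f^m(x)) - k_X(x,f^m(x))\bigr|\leq k_X(x,y),
\]
so dividing by $m$ and letting $m\to\infty$ shows that $\lim_{m\to\infty}\frac{k_X(y,f^m(x))}{m}$ exists and equals $\lim_{m\to\infty}\frac{k_X(x,f^m(x))}{m}$.

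Next I would treat $(s_m(x))_{m\in\N}$. For fixed $m,n\in\N$ and for every $j\in\N$, triangle inequality gives
\[
k_X(f^j(x),f^{j+m+n}(x)) \leq k_X(f^j(x),f^{j+m}(x)) + k_X(f^{j+m}(x),f^{j+m+n}(x)).
\]
By Remark \ref{decrescesm} both summands on the right are monotone in $j$, so the limits exist; passing to the limit as $j\to\infty$ yields
\[
s_{m+n}(x) \leq s_m(x) + s_n(x).
\]
Thus $(s_m(x))_{m\in\N}$ is subadditive, and Fekete's lemma gives existence of $\lim_{m\to\infty}\frac{s_m(x)}{m}$.

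The proof is essentially a two-line application of subadditivity; there is no serious obstacle, the only point requiring mild care is the basepoint change in the first limit, which is immediate from the triangle inequality.
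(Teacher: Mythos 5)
Your proof is correct and follows essentially the same route as the paper: triangle inequality plus the contracting property of $f$ for the first sequence, and the same inequality along the orbit followed by a passage to the limit for $(s_m(x))$, with Fekete's lemma closing both. The only (harmless) differences are that you justify the basepoint change to $y$ here, which the paper defers to Proposition \ref{base}, and that you take the limit of the shifted second summand directly instead of first bounding it by $k_X(f^u(x),f^{u+n}(x))$ via contractivity.
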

\begin{proof}
For all $n,m\geq 0$ we have $$k_X(x,f^{m+n}(x))\leq k_X(x,f^m(x))+k_X(f^m(x),f^{m+n}(x))\leq  k_X(x,f^m(x))+k_X(x,f^n(x)),$$
which proves that $(k_X(x,f^m(x)))$ is subadditive.
For all $n,m,u\geq 0$ we have
\begin{equation*}
\begin{split}
k_X(f^u(x),f^{u+m+n}(x))&\leq k_X(f^u(x),f^{u+m}(x))+k_X(f^{u+m}(x),f^{u+m+n}(x))\\&\leq  k_X(f^u(x),f^{u+m}(x))+ k_X(f^u(x),f^{u+n}(x)).
\end{split}
\end{equation*}
Taking the limit as $u\to \infty$, we have $s_{m+n}\leq s_m+s_n$.
The limits exist by Fekete's lemma.
\end{proof}

\begin{definition}\label{divrate}
Let $X$ be a   complex manifold and let $f\in \Hol(X,X)$. The {\sl divergence rate} $c(f)\in \R^+$ is defined as 
$$c(f):=\lim_{m\to \infty}\frac{k_X(x,f^m(x))}{m}.$$
\end{definition}

\begin{remark}\label{bah}
Clearly for all $k\in \N$ we have $$c(f^k)=kc(f).$$
\end{remark}

\begin{proposition}\label{base}
Let $X$ be a   complex manifold and let $f\in \Hol(X,X)$. Let $x\in X$. Then
\begin{equation}\label{cf}
c(f)=\lim_{m\to\infty} \frac{s_m(x)}{m}.
\end{equation}
Moreover, $c(f)$ does not depend on $x$, and for all $y\in X$ we have $c(f)=\lim_{m\to\infty} \frac{k_X(y,f^m(x))}{m}$.
\end{proposition}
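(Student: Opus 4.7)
The plan is to establish the three assertions in order, using two tools already at hand: the fact that every iterate $f^j$ is $k_X$-non-expansive (which is what makes Remark \ref{decrescesm} work and in particular gives $s_m(x)\le k_X(x,f^m(x))$), and the subadditivity of Lemma \ref{subadd} together with Fekete, so that the limits in play exist and coincide with the corresponding infima.

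For the equality \eqref{cf} at a fixed $x$, one direction is free: from $s_m(x)\le k_X(x,f^m(x))$ we obtain $\lim_m s_m(x)/m\le c(f)$. The reverse inequality is the main step. Fix $m,n\in\N$ and use the triangle inequality together with the non-expansive property of $f^{jm}$ to get, for every $k\ge 1$,
\begin{equation*}
k_X(x,f^{n+km}(x))\le k_X(x,f^n(x))+\sum_{j=0}^{k-1}k_X\bigl(f^{n+jm}(x),f^{n+(j+1)m}(x)\bigr)\le k_X(x,f^n(x))+k\,k_X(f^n(x),f^{n+m}(x)).
\end{equation*}
Dividing by $n+km$ and letting $k\to\infty$, the first summand on the right disappears and $k/(n+km)\to 1/m$; since by Lemma \ref{subadd} the full sequence $k_X(x,f^N(x))/N$ converges to $c(f)$, its subsequence indexed by $N=n+km$ does too. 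Hence $c(f)\le k_X(f^n(x),f^{n+m}(x))/m$. Letting $n\to\infty$ yields $c(f)\le s_m(x)/m$ for every $m$, and therefore $c(f)\le \lim_m s_m(x)/m$, closing the sandwich.

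For the independence of $c(f)$ on the base point, the triangle inequality combined with the non-expansiveness of $f^m$ gives, for any $x,y\in X$,
\begin{equation*}
k_X(y,f^m(y))\le k_X(y,x)+k_X(x,f^m(x))+k_X(f^m(x),f^m(y))\le 2k_X(x,y)+k_X(x,f^m(x)),
\end{equation*}
so dividing by $m$ and passing to the limit yields $c(f)_y\le c(f)_x$, and symmetry provides equality. The two-point limit then follows from
\begin{equation*}
k_X(x,f^m(x))-k_X(x,y)\le k_X(y,f^m(x))\le k_X(y,x)+k_X(x,f^m(x)),
\end{equation*}
which after division by $m$ sandwiches $k_X(y,f^m(x))/m$ between two sequences both tending to $c(f)$.

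I do not anticipate a serious obstacle; the only slightly delicate moment is justifying the interchange of the two limits (first $k\to\infty$, then $n\to\infty$) in the lower-bound argument, but this is legitimate because Lemma \ref{subadd} already guarantees that $k_X(x,f^N(x))/N$ has a limit, so any subsequence converges to that same value $c(f)$.
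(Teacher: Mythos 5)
Your proof is correct and follows essentially the same route as the paper: the easy inequality $s_m(x)\le k_X(x,f^m(x))$ gives one direction, and the other comes from a telescoping decomposition of the orbit into blocks of length $m$ combined with the non-expansiveness of the iterates and Fekete's lemma, with the independence of the base point and the two-point limit handled by the same triangle-inequality sandwich. The only cosmetic difference is that the paper estimates the telescoping sum via the Ces\`aro means theorem applied to $i\mapsto k_X(f^{ui}(x),f^{u(i+1)}(x))$, whereas you bound every block by the first one and then let the starting index $n\to\infty$; both yield $c(f)\le s_m(x)/m$ for each $m$, and your worry about interchanging limits is unfounded since the first passage to the limit just produces an inequality valid for each fixed $n$.
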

\begin{proof}

It is clear that $k_X(x,f^m(x))\geq s_m(x)$, thus $\lim_{m\to \infty}\frac{1}{m}k_X(x,f^m(x))\geq \lim_{m\to\infty} \frac{1}{m}s_m(x)$.
We claim that for all integers $u\geq 1$,
\begin{equation}\label{bound-cf}
\lim_{m\to \infty}\frac{k_X(x, f^{um}(x))}{um}\leq \frac{s_u(x)}{u}.
\end{equation}
 Fix $u\geq 1$. Then, for all $m\geq 1$,
\begin{equation*}
\frac{k_X(x,f^{um}(x))}{um}\leq \frac{1}{um}\sum_{i=0}^{m-1} k_X(f^{ui}(x),f^{u(i+1)}(x)),
\end{equation*}
and the claim follows since the right-hand side tends to $\frac{s_u(x)}{u}$ as $m\to \infty$ by the  Ces\`aro means theorem. Therefore  we have,  for every integer $u\geq 1$,
\begin{equation*}
\lim_{k\to\infty} \frac{k_X(x,f^{m}(x))}{m}=\lim_{m\to\infty} \frac{k_X(x,f^{um}(x))}{um} \leq \frac{s_u(x)}{u}.
\end{equation*}
Taking the limit as $u\to \infty$, we obtain $\lim_{m\to \infty}\frac{1}{m}k_X(x,f^m(x))\leq \lim_{m\to\infty} \frac{1}{m}s_m(x)$, hence \eqref{cf} holds.

Finally, let $x,y\in X$. Then
\begin{equation*}
\begin{split}
k_X(x,f^m(x))&\leq k_X(x,y)+k_X(y,f^m(y))+k_X(f^m(y),f^m(x))\\&\leq k_X(y,f^m(y)) +2 k_X(x,y),
\end{split}
\end{equation*}
which proves that $c(f)$ does not depend on $x$. A similar argument gives the last statement.
\end{proof}

\begin{remark}\label{c-elliptic}
Let $X$ be a hyperbolic complex manifold and let $f\in \Hol(X,X)$. If $\{f^k\}$ is not compactly divergent, then $c(f)=0$. In particular, if $f$ has a fixed point in $X$ then $c(f)=0$.
\end{remark}

\begin{lemma}\label{c}
Let $X$ and $Y$ be  complex manifolds.
Let $f\in \Hol(X,X)$ and $g\in \Hol(Y,Y)$. Suppose there exists $h\in\Hol(X,Y)$ which satisfies $h\circ f= g\circ h$, {\sl i.e.} $f$ is semiconjugate to $g$. Then $c(f)\geq c(g).$
\end{lemma}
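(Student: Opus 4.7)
My plan is to exploit two facts: the contracting property of holomorphic maps with respect to the Kobayashi pseudo-distance, and the fact that, by Proposition \ref{base}, the divergence rate $c(g)$ may be computed starting from any base point in $Y$, in particular from a point of the form $h(x)$.

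First I would observe that the semiconjugation relation $h\circ f = g\circ h$ iterates to $h\circ f^m = g^m\circ h$ for all $m\in\N$, by a straightforward induction. Then, fixing $x\in X$ and applying the distance-decreasing property of $h\colon X\to Y$ with respect to the Kobayashi pseudo-distances, I get
\[
k_Y(h(x), g^m(h(x))) = k_Y(h(x), h(f^m(x))) \leq k_X(x, f^m(x))
\]
for every $m\in\N$.

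Dividing by $m$ and letting $m\to\infty$, the right-hand side converges to $c(f)$ by Definition \ref{divrate}, while the left-hand side converges to $c(g)$ by Proposition \ref{base} applied at the base point $y=h(x)\in Y$. Hence $c(g)\leq c(f)$, as required.

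There is no real obstacle here; the argument is essentially a one-line application of the contraction of $k_X$ under holomorphic maps, combined with the fact, already established, that the divergence rate is base-point independent.
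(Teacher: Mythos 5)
Your argument is correct and is exactly the paper's proof: the paper likewise notes $k_X(x,f^m(x))\geq k_Y(h(x),h(f^m(x)))=k_Y(h(x),g^m(h(x)))$ and concludes by dividing by $m$. No issues.
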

\begin{proof}
Since $k_{X}(x,f^m(x))\geq k_{Y}(h(x),h(f^m(x)))=k_{Y}(h(x),g^m(h(x)))$, the result follows at once.
\end{proof}

\section{Models}\label{sec-model}

\begin{definition}
Let $X$ be a complex manifold and let $f\in \Hol(X,X)$. A domain $A\subset X$ is {\sl $f$-absorbing}  if $f(A)\subset A$ and for each point $z\in X$ there exists $m\in \N$ such that $f^m(z)\subset A$.
\end{definition}

\begin{remark}
If $f\in \Hol(X,X)$ and $A\subset X$ is an $f$-absorbing domain, then for any compact subset $K\subset X$ there exists $m\in \N$ such that $f^m(K)\subset A$. Indeed, given any $z\in K$ there exists $n(z)\in \N$ such that $f^n(z)\in A$. Hence $\{f^{-n(z)}(A)\}_{z\in K}$ is an open covering of $K$. Since $K$ is compact, there exists a finite number of points $\{z_1,\ldots, z_l\}\subset K$ such that $K\subset \cup_{j=1}^l f^{-n(z_j)}(A)$ and thus, if we set $m=\max_{j=1,\ldots, l}\{n(z_j)\}$, it follows that $f^m(K)\subset A$.
\end{remark}

Note that if $f$ is univalent on $X$, then $A=X$ is an $f$-absorbing domain on which $f$ is univalent.

\begin{definition}
Let $X$ be a complex manifold and let $f\in\Hol (X,X)$. A {\sl semi-model} for $f$ is a triple  $(\Omega,h,\psi)$  where
$\Omega$ is a complex manifold, $h\colon X\to \Omega$ is a holomorphic mapping, $\psi\colon \Omega\to\Omega$ is an automorphism such that
\begin{equation}\label{uno}
h\circ f=\psi\circ h,
\end{equation}
and
\begin{equation}\label{due}
\bigcup_{n\geq 0} \psi^{-n}(h(X))=\Omega.
\end{equation}
We call the manifold $\Omega$ the {\sl base space} and the mapping $h$ the {\sl intertwining mapping}.

If there exists an $f$-absorbing domain   $A\subset X$ such that $h|_A\colon A\to \Omega$ is univalent, we call the triple $(\Omega,h,\psi)$ a {\sl model} for $f$.

Let $(\Omega,h,\psi)$ and $(\Lambda, k,\varphi)$ be two semi-models for $f$. A {\sl morphism of models} $\hat\eta\colon(\Omega,h,\psi)\to(\Lambda, k,\varphi)$ is given by
 a holomorphic  mapping $\eta\colon \Omega\to \Lambda$ such that
$$ \eta\circ h=k,$$ and $$\varphi\circ \eta=\eta\circ \psi.$$ An {\sl isomorphism of models} is a morphism of models which admits an inverse.
\end{definition}

\begin{remark}
By \eqref{due}, $h$ is constant if and only if the semi-model $(\Omega,h,\psi)$ is {\sl trivial}, that is, $\Omega$ reduces to a point and $\psi$ is  the identity. Moreover, if $(\Omega,h,\psi)$ is a   model  for $f\in \Hol(X,X)$, then $\rm{dim}(\Omega)=\rm{dim}(X)$.
\end{remark}

The next lemma shows the relations between  the mapping $f$ and  the intertwining mapping.
\begin{lemma}\label{uni-abs}
Let $X$ be a complex manifold and  let $f\in \Hol(X,X)$.
\begin{enumerate}
  \item Assume that   $f$ admits a semi-model $(\Lambda,k,\varphi)$ such that $k$ is univalent on a domain $B\subset X$. Then $f$ is univalent on $B$.
  \item Assume that   $f$ admits a model $(\Omega,h,\psi)$. Suppose that $B\subset X$ is an $f$-invariant domain such that  $f$ is univalent on $B$. Then $h$ is univalent on $B$.
  \item Assume that   $f$ admits a semi-model $(\Lambda,k,\varphi)$. If $B\subset X$ is an $f$-absorbing set, then $k(B)$ is a $\psi$-absorbing set.
\end{enumerate}
\end{lemma}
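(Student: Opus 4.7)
The plan is to treat the three parts in order, using only the functional equation $h\circ f=\psi\circ h$ (resp.\ $k\circ f=\varphi\circ k$) together with the fact that $\psi$ (resp.\ $\varphi$) is an automorphism.

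For (1), I would argue directly. Suppose $x,y\in B$ satisfy $f(x)=f(y)$. Applying $k$ and using the semiconjugacy, $\varphi(k(x))=k(f(x))=k(f(y))=\varphi(k(y))$. Since $\varphi$ is an automorphism it is injective, so $k(x)=k(y)$, and the univalence of $k$ on $B$ forces $x=y$.

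For (2), the key trick is to push forward along the orbit until we land in the absorbing domain $A$ where univalence of $h$ is known, and then pull back using univalence of $f$ on $B$. Fix an $f$-absorbing $A\subset X$ with $h|_A$ univalent. Let $x,y\in B$ with $h(x)=h(y)$. Applying $\psi^m$ to both sides gives $h(f^m(x))=h(f^m(y))$ for every $m\in\N$. Pick $m$ large enough that both $f^m(x)$ and $f^m(y)$ lie in $A$; univalence of $h$ on $A$ yields $f^m(x)=f^m(y)$. Now $B$ is $f$-invariant, hence $f^j(x),f^j(y)\in B$ for all $j\geq 0$, and $f|_B$ is injective. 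A downward induction on $j$ (cancelling one $f$ at a time) gives $x=y$. The only real thing to check here is that we may choose a single $m$ that works simultaneously for $x$ and $y$, which is immediate since each orbit lands in $A$ in finite time and $A$ is forward invariant.

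For (3), I first verify forward invariance: $\psi(k(B))=k(f(B))\subset k(B)$ because $f(B)\subset B$. For the absorbing property, pick any $w\in \Lambda$. By \eqref{due} there exist $n\in\N$ and $x\in X$ with $\psi^n(w)=k(x)$. Since $B$ is $f$-absorbing, there is $j\in\N$ with $f^j(x)\in B$. Then
\begin{equation*}
\psi^{n+j}(w)=\psi^j(k(x))=k(f^j(x))\in k(B),
\end{equation*}
so $k(B)$ absorbs the orbit of $w$, as desired.

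I do not expect any serious obstacle; the only subtle point is part (2), where one must keep track of the fact that the backward induction needs $f^j(x),f^j(y)$ to remain in $B$ for all $j\le m$, which is exactly where the hypothesis that $B$ is $f$-invariant (and not merely an arbitrary injectivity locus of $f$) is used.
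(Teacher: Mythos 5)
Your proposal is correct and follows essentially the same route as the paper: part (1) is the paper's argument in contrapositive form, part (2) is the paper's ``push the orbit into the absorbing domain $A$ where $h$ is univalent'' argument (the paper phrases it forward from $x\neq y$ using injectivity of $f^m$ on $B$, while you phrase it backward from $h(x)=h(y)$ and cancel $f$ one step at a time, which is equivalent), and part (3) is identical. No gaps.
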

\begin{proof}
(1) Let $x,y\in B$, and $x\neq y$. Then $k(x)\neq k(y)$, which implies $\varphi(k(x))\neq \varphi(k(y))$. Thus $k(f(x))\neq k(f(y))$, which implies $f(x)\neq f(y)$.

(2) By definition there exists an $f$-absorbing domain $A$ such that $h$ is univalent on $A$. Let $x, y \in B$, and $x\neq y$. Then for all $n\in \N$ we have  $f^n(x)\neq f^n(y)$. Since $A$ is $f$-absorbing, there exists $m$ such that $f^m(x), f^m(x)\in A$. Thus $$\psi^m(h(x))=h(f^m(x))\neq h(f^m(x))=\psi^m(h(y)),$$ which implies $h(x)\neq h(y).$

(3) Finally, assume $B$ is $f$-absorbing. Let $z\in \Lambda$. Then there exists $m\in \N$ such $\varphi^m(z) \in k(X)$. Hence, there exists $x\in X$ such that $k(x)=\varphi^m(z)$. Since $B$ is $f$-absorbing, there exists $n\in \N$ such that $f^n(x)\in B$. Hence
\[
\varphi^{m+n}(z)=\varphi^n(\varphi^m(z))=\varphi^n(k(x))=k(f^n(x))\in k(B),
\]
which proves that $k(B)$ is $\varphi$-absorbing.
\end{proof}

Morphism between semi-models are very rigid, as the following lemmas show.
\begin{lemma}\label{morf-su}
Let $(\Omega,h,\psi)$ and $(\Lambda, k,\varphi)$ be two semi-models for $f$ and let $\hat \eta\colon  (\Omega,h,\psi)\to(\Lambda, k,\varphi)$ be a morphism of models. Then
$\eta\colon \Omega\to \Lambda$ is surjective.
\end{lemma}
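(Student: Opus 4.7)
The plan is to show that $\eta(\Omega)$ already exhausts the increasing union $\bigcup_{n\geq 0}\varphi^{-n}(k(X))$, which by condition \eqref{due} applied to $(\Lambda,k,\varphi)$ equals $\Lambda$. The only inputs needed are the two intertwining identities $\eta\circ h=k$ and $\eta\circ\psi=\varphi\circ\eta$, plus the fact that $\psi$ and $\varphi$ are automorphisms, so the argument will be essentially set-theoretic.

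First I would rewrite the equivariance relation in inverse form. Since $\varphi$ and $\psi$ are biholomorphisms, the identity $\varphi\circ\eta=\eta\circ\psi$ gives, after applying $\varphi^{-1}$ on the left and $\psi^{-1}$ on the right, that $\eta\circ\psi^{-1}=\varphi^{-1}\circ\eta$. An easy induction on $n\geq 0$ then yields $\eta\circ\psi^{-n}=\varphi^{-n}\circ\eta$ for every $n\in\N$.

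Next I would compute the image of each piece of the absorbing filtration of $\Omega$. Using the previous step together with $\eta\circ h=k$, I get
\[
\eta\bigl(\psi^{-n}(h(X))\bigr)=\varphi^{-n}\bigl(\eta(h(X))\bigr)=\varphi^{-n}(k(X))
\]
for every $n\in\N$. Taking the union over $n$ and invoking condition \eqref{due} for both semi-models,
\[
\eta(\Omega)=\eta\!\left(\bigcup_{n\geq 0}\psi^{-n}(h(X))\right)=\bigcup_{n\geq 0}\varphi^{-n}(k(X))=\Lambda,
\]
which is the desired surjectivity.

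There is no real obstacle here: the only subtlety is to remember that $\psi$ is an automorphism (so that $\psi^{-n}$ makes sense and the intertwining relation can be inverted), which is part of the definition of a semi-model. No analytic or manifold-theoretic input is required beyond the formal properties built into the definitions.
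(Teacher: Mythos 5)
Your proof is correct and follows exactly the same route as the paper's (one-line) argument: invert the intertwining relation to get $\eta\circ\psi^{-n}=\varphi^{-n}\circ\eta$, combine with $\eta\circ h=k$ to see that $\eta(\psi^{-n}(h(X)))=\varphi^{-n}(k(X))$, and take the union over $n$ using condition \eqref{due} on both sides. Your write-up just spells out the steps the paper compresses into a single displayed chain of equalities.
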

\begin{proof}
Indeed, $$\Lambda=\bigcup_{n\geq 0}\phi^{-n}(k(X))=\bigcup_{n\geq 0}\eta(\psi^{-n}(X))=\eta(\Omega).$$
\end{proof}
\begin{lemma}\label{two-one}
Let $(\Omega,h,\psi)$ and $(\Lambda, k,\varphi)$ be two semi-models for $f$ and let $\hat\beta,\hat\eta\colon  (\Omega,h,\psi)\to(\Lambda, k,\varphi)$ be two morphisms of models. Then $\hat\eta=\hat \beta$.
\end{lemma}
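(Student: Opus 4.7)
The plan is to show that any morphism of semi-models is completely determined by its restriction to $h(X)$, and that this restriction is forced by the requirement $\eta\circ h=k$. The key ingredient will be the generation property \eqref{due}, namely $\Omega=\bigcup_{n\geq 0}\psi^{-n}(h(X))$, which lets us spread agreement on $h(X)$ out to the whole of $\Omega$ by applying the intertwining relation between $\psi$ and $\varphi$.

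First, I would observe that both $\eta$ and $\beta$ satisfy $\eta\circ h=k=\beta\circ h$, so $\eta$ and $\beta$ coincide on the subset $h(X)\subset\Omega$. Next, I would iterate the intertwining relation $\varphi\circ\eta=\eta\circ\psi$ to obtain $\varphi^n\circ\eta=\eta\circ\psi^n$ for every $n\geq 0$; since $\varphi$ is an automorphism of $\Lambda$, this can be rewritten as $\eta=\varphi^{-n}\circ\eta\circ\psi^n$. The identical identity holds for $\beta$.

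Now fix an arbitrary point $x\in\Omega$. By \eqref{due} there exist an integer $n\geq 0$ and a point $y\in X$ with $\psi^n(x)=h(y)$. Applying the formula above,
\begin{equation*}
\eta(x)=\varphi^{-n}\bigl(\eta(\psi^n(x))\bigr)=\varphi^{-n}\bigl(\eta(h(y))\bigr)=\varphi^{-n}(k(y)),
\end{equation*}
and the same computation carried out for $\beta$ gives $\beta(x)=\varphi^{-n}(k(y))$. Hence $\eta(x)=\beta(x)$, and since $x$ was arbitrary we conclude $\hat\eta=\hat\beta$.

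There is no real obstacle here; the statement is essentially a tautology once the definitions are unwound. The conceptual point worth highlighting is that a semi-model is generated, in a categorial sense, by $h(X)$ under negative iterates of $\psi$, which is precisely what the saturation axiom $\bigcup_{n\geq 0}\psi^{-n}(h(X))=\Omega$ encodes; combined with equivariance, this forces uniqueness of morphisms.
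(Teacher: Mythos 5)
Your proof is correct and is essentially the paper's own argument: both use that $\eta$ and $\beta$ agree on $h(X)$ because each composes with $h$ to give $k$, and then propagate this to all of $\Omega$ via the intertwining relation and the saturation property $\bigcup_{n\geq 0}\psi^{-n}(h(X))=\Omega$. The paper merely writes the same computation as an identity of compositions, $\eta\circ\psi^{-n}\circ h=\varphi^{-n}\circ\eta\circ h=\varphi^{-n}\circ\beta\circ h=\beta\circ\psi^{-n}\circ h$, rather than pointwise.
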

\begin{proof}
Since $\eta\circ h=\beta\circ h$, it follows $$\eta\circ \psi^{-n}\circ h= \varphi^{-n}\circ \eta\circ h= \varphi^{-n}\circ\beta\circ h=\beta\circ \psi^{-n}\circ h,$$ which implies
$\eta=\beta$ on $\Omega$ by \eqref{due}.
\end{proof}
Clearly a morphism of models for which $\eta$ univalent is in fact an isomorphism of models. But something more is true:
\begin{corollary}\label{inverse}
Let $(\Omega,h,\psi)$ and $(\Lambda, k,\varphi)$ be two semi-models for $f$ and let $\hat \eta\colon  (\Omega,h,\psi)\to(\Lambda, k,\varphi)$ and $\hat\beta\colon(\Lambda, k,\varphi)\to  (\Omega,h,\psi)$ be two morphisms of models. Then $\hat\beta={\hat\eta}^{-1}$ and the two semi-models are isomorphic.
\end{corollary}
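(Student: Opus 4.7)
The plan is to deduce this corollary from Lemma \ref{two-one} by showing that both compositions $\hat\beta\circ\hat\eta$ and $\hat\eta\circ\hat\beta$ are morphisms of semi-models, and hence must coincide with the respective identity morphisms by the uniqueness statement.

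First I would verify that morphisms of semi-models compose: if $\hat\eta\colon(\Omega,h,\psi)\to(\Lambda,k,\varphi)$ and $\hat\beta\colon(\Lambda,k,\varphi)\to(\Omega,h,\psi)$ are morphisms, then $\beta\circ\eta\colon\Omega\to\Omega$ satisfies
\[
(\beta\circ\eta)\circ h=\beta\circ(\eta\circ h)=\beta\circ k=h,
\]
and
\[
\psi\circ(\beta\circ\eta)=(\psi\circ\beta)\circ\eta=(\beta\circ\varphi)\circ\eta=\beta\circ(\varphi\circ\eta)=\beta\circ(\eta\circ\psi)=(\beta\circ\eta)\circ\psi.
\]
Thus $\widehat{\beta\circ\eta}$ is a morphism $(\Omega,h,\psi)\to(\Omega,h,\psi)$. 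The identity $\widehat{\mathrm{id}}_\Omega\colon(\Omega,h,\psi)\to(\Omega,h,\psi)$ is trivially such a morphism as well.

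By Lemma \ref{two-one} applied to the pair of morphisms $\widehat{\beta\circ\eta}$ and $\widehat{\mathrm{id}}_\Omega$, we conclude $\beta\circ\eta=\mathrm{id}_\Omega$. The same argument, with the roles of the two semi-models interchanged, shows that $\eta\circ\beta=\mathrm{id}_\Lambda$. Hence $\beta=\eta^{-1}$ as maps, and therefore $\hat\beta=\hat\eta^{-1}$ as morphisms, so the two semi-models are isomorphic.

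There is no real obstacle here: the argument is a formal categorical consequence of Lemma \ref{two-one}, since that lemma says the hom-sets between semi-models of $f$ contain at most one element, and in any category with this property a pair of mutually directed morphisms is automatically a pair of mutually inverse isomorphisms. The only thing to check carefully is that the composition of morphisms is again a morphism, which is the direct computation displayed above.
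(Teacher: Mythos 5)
Your proof is correct and follows exactly the paper's argument: both compose the two morphisms, observe that the composite and the identity are morphisms from a semi-model to itself, and invoke Lemma \ref{two-one} to conclude they coincide. The only difference is that you spell out the verification that the composite is a morphism, which the paper leaves as "easy to see."
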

\begin{proof}
It is easy to see that $\hat{\sf id}_{\Omega}$ and ${\hat\beta}\circ \hat \eta$ are  morphism of models from  $ (\Omega,h,\psi)$ to itself. Thus ${\hat\beta}\circ \hat \eta=\hat{\sf id}_{\Omega}$ by Lemma \ref{two-one}. Similarly, $ {\hat\eta}\circ\hat\beta=\hat{\sf id}_{\Lambda}.$
\end{proof}
\begin{remark}\label{ordering}
The previous results show that there is a natural partial ordering among the isomorphism classes of semi-models for $f$, defined in the following way: $(\Omega, h,\psi)\geq (\Lambda,k,\psi)$ if  there exists a morphism of models $\hat \eta\colon  (\Omega,h,\psi)\to(\Lambda, k,\varphi)$.
\end{remark}

\begin{proposition}\label{uniq}
Assume that  $f\in \Hol(X,X)$ admits a model $(\Omega,h,\psi)$. Then $(\Omega,h,\psi)$ satisfies the following universal property: if $(\Lambda, k,\varphi)$ is a semi-model for $f$, then there exists a  morphism of models
$\hat\eta\colon(\Omega,h,\psi)\to(\Lambda, k,\varphi)$.
\end{proposition}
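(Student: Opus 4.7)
The plan is to define $\eta$ first on a large $\psi$-invariant subset of $\Omega$ using the univalence of $h$ on the absorbing domain, and then to extend it to all of $\Omega$ by conjugating with iterates of $\psi$ and $\varphi$. Let $A\subset X$ be an $f$-absorbing domain on which $h$ is univalent. My first step is to show that $h(A)$ is $\psi$-absorbing in $\Omega$: invariance follows from $\psi(h(A))=h(f(A))\subset h(A)$, and for any $\omega \in \Omega$ I pick $x\in X$ and $n\in\N$ with $\psi^{n}(\omega)=h(x)$ (using \eqref{due}), and then pick $m\in \N$ with $f^{m}(x)\in A$; this gives $\psi^{n+m}(\omega)=h(f^{m}(x))\in h(A)$.

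Next, I define $\eta_0\colon h(A)\to \Lambda$ by $\eta_0:=k\circ (h|_A)^{-1}$, which is well-defined and holomorphic by univalence of $h|_A$. For a general $\omega\in \Omega$, I choose $n\in \N$ with $\psi^{n}(\omega)\in h(A)$ and set
\[
\eta(\omega):=\varphi^{-n}\bigl(\eta_0(\psi^{n}(\omega))\bigr).
\]
To see that $\eta$ is well-defined, suppose $\psi^{n}(\omega),\psi^{m}(\omega)\in h(A)$ with $m\geq n$, and let $a:=(h|_A)^{-1}(\psi^{n}(\omega))\in A$. Then $\psi^{m-n}(h(a))=h(f^{m-n}(a))$, and since $A$ is $f$-invariant, $f^{m-n}(a)\in A$, so $\eta_0(\psi^{m}(\omega))=k(f^{m-n}(a))$. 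On the other hand, the intertwining $k\circ f=\varphi\circ k$ iterates to give $\varphi^{m-n}(k(a))=k(f^{m-n}(a))$, so the two candidate values agree. Holomorphicity is automatic because $\eta$ is locally a composition of holomorphic maps.

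Finally, I verify the two defining relations. For $\eta\circ h=k$: given $x\in X$, pick $n$ with $f^{n}(x)\in A$; then $\psi^{n}(h(x))=h(f^{n}(x))\in h(A)$, so $\eta(h(x))=\varphi^{-n}(k(f^{n}(x)))=k(x)$ by the intertwining for $k$. For $\varphi\circ\eta=\eta\circ \psi$: if $\psi^{n}(\omega)\in h(A)$ then also $\psi^{n-1}(\psi(\omega))\in h(A)$, whence
\[
\eta(\psi(\omega))=\varphi^{-(n-1)}\bigl(\eta_0(\psi^{n}(\omega))\bigr)=\varphi\bigl(\varphi^{-n}\eta_0(\psi^{n}(\omega))\bigr)=\varphi(\eta(\omega)).
\]
The main subtlety is really the well-definedness argument, because it is the only place where the hypothesis that $h$ is univalent on an $f$-absorbing (not just $f$-invariant) domain is essential: univalence ensures $\eta_0$ is unambiguously defined, while the absorbing property is what makes every $\omega\in \Omega$ eventually land in $h(A)$ under iteration of $\psi$ and thus makes the formula meaningful for every point.
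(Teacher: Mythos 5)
Your proof is correct and follows essentially the same route as the paper: the paper defines $\eta_n:=\varphi^{-n}\circ k\circ h|_A^{-1}\circ\psi^n$ on $\Omega_n:=\psi^{-n}(h(A))$ and glues these via the compatibility $\eta_m|_{\Omega_n}=\eta_n$, which is exactly your formula $\eta(\omega)=\varphi^{-n}(\eta_0(\psi^n(\omega)))$ together with your well-definedness check. You simply spell out the details (the $\psi$-absorbing property of $h(A)$, the compatibility, and the two intertwining identities) that the paper leaves as ``easy to see''.
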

\begin{proof}
Let $A$ be an $f$-absorbing domain  such that $h|_A\colon A\to \Omega$ is univalent. Set $\Omega_n:= \psi^{-n}(h(A)).$ Since $h(A)$ is $\psi$-absorbing it follows that $\Omega$ is the growing union of the domains $\Omega_n$. For all $n\geq 0$ define the holomorphic mapping $\eta_n\colon \Omega_n\to \Lambda$ by
\begin{equation}\label{def-morf}
\eta_n:=\varphi^{-n}\circ k\circ h|_A^{-1}\circ \psi^n
\end{equation}
Using the definition of semi-models, it is easy to see that $\eta_m|_{\Omega_n}=\eta_n$ for all $m\geq n$. Therefore, setting $\eta(w)=\eta_n(w)$ if $w\in \Omega_n$ we  have  a holomorphic map $\eta\colon \Omega\to\Lambda$.
Using the properties of semi-models, it is also easy to see that $\eta$ defines a morphism of models $\hat\eta\colon(\Omega,h,\psi)\to(\Lambda, k,\varphi)$.
\end{proof}

This shows that if $f$ admits a model,  it is the maximum element  in the ordering of semi-models.
Since two maximum elements necessarily coincide by Corollary \ref{inverse}, we have the following.
\begin{corollary}\label{unique-model}
Let $X$ be a complex manifold. Let $f\in \Hol(X,X)$. If  $f$ admits a model then it is unique (up to isomorphisms of models). In particular, the base space of the model is unique up to biholomorphisms.
\end{corollary}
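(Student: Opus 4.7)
The statement is essentially a formal consequence of Proposition \ref{uniq} (universal property) together with Corollary \ref{inverse} (two-sided morphisms automatically invert each other), so the plan is to chain these two results.

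Suppose $(\Omega, h, \psi)$ and $(\Omega', h', \psi')$ are two models for $f$. The plan is to apply Proposition \ref{uniq} twice, in opposite directions. First, since $(\Omega, h, \psi)$ is a model and $(\Omega', h', \psi')$ is in particular a semi-model, Proposition \ref{uniq} supplies a morphism of models $\hat\eta \colon (\Omega, h, \psi) \to (\Omega', h', \psi')$. Second, swapping the roles (which is legitimate because $(\Omega', h', \psi')$ is also a model, not merely a semi-model), Proposition \ref{uniq} also supplies a morphism $\hat\beta \colon (\Omega', h', \psi') \to (\Omega, h, \psi)$.

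Now Corollary \ref{inverse} applies directly to the pair $(\hat\eta, \hat\beta)$: it forces $\hat\beta = \hat\eta^{-1}$, and hence the two models are isomorphic. This gives the uniqueness of the model up to isomorphism of models. For the final sentence, note that an isomorphism of models is given by a holomorphic map $\eta \colon \Omega \to \Omega'$ which by definition admits a holomorphic inverse $\beta$, hence is in particular a biholomorphism between the base spaces.

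There is essentially no obstacle here: the two nontrivial ingredients (the construction \eqref{def-morf} producing a morphism into any semi-model, and the rigidity statement Lemma \ref{two-one} which underlies Corollary \ref{inverse}) have already been carried out. The corollary is just the observation that a maximum element of a partial order, in the sense of Remark \ref{ordering}, is unique whenever it exists.
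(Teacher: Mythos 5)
Your proposal is correct and follows exactly the paper's route: the paper deduces the corollary from Proposition \ref{uniq} (the model is a maximum among semi-models) combined with Corollary \ref{inverse} (two maximum elements coincide), which is precisely your two-applications-plus-inversion argument spelled out. No issues.
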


As a direct consequence of the previous corollary we have the following result on the essential uniqueness of the intertwining map once the base space and the automorphism are fixed.

\begin{corollary}
Let $X$ be a complex manifold. Let $f\in \Hol(X,X)$.  Let $(\Omega,h,\psi)$ and $(\Omega, k,\psi)$ be two models for $f$ with the same base space.  Then there exists an automorphism $\Psi$ of $\Omega$ commuting with $\psi$ such that $k=\Psi\circ h$.
\end{corollary}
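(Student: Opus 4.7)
The plan is to apply the universal property of models (Proposition \ref{uniq}) twice and then invoke the rigidity result (Corollary \ref{inverse}) to conclude that the resulting morphism is invertible, hence an automorphism of $\Omega$, which will automatically commute with $\psi$.

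More precisely, first I would apply Proposition \ref{uniq} to the model $(\Omega, h, \psi)$ and the semi-model $(\Omega, k, \psi)$. This produces a morphism of models $\hat{\Psi}\colon(\Omega,h,\psi)\to(\Omega,k,\psi)$, that is, a holomorphic map $\Psi\colon \Omega\to\Omega$ satisfying
\[
\Psi\circ h=k\quad\text{and}\quad \Psi\circ\psi=\psi\circ\Psi.
\]
Symmetrically, applying Proposition \ref{uniq} with the roles of $(\Omega,h,\psi)$ and $(\Omega,k,\psi)$ interchanged yields a morphism of models $\hat{\Phi}\colon(\Omega,k,\psi)\to(\Omega,h,\psi)$, given by a holomorphic $\Phi\colon\Omega\to\Omega$ with $\Phi\circ k=h$ and $\Phi\circ\psi=\psi\circ\Phi$.

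By Corollary \ref{inverse}, applied to the pair of morphisms $\hat{\Psi}$ and $\hat{\Phi}$, these two morphisms are mutually inverse isomorphisms of models. In particular, $\Psi$ is a biholomorphism of $\Omega$, hence an automorphism, and the relation $\Psi\circ\psi=\psi\circ\Psi$ obtained from the morphism property tells us precisely that $\Psi$ commutes with $\psi$. Combined with $\Psi\circ h=k$, this completes the proof.

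There is essentially no obstacle here: the statement is a formal consequence of the universal property together with the rigidity lemmas already established. The only point to be careful about is to apply Proposition \ref{uniq} in both directions, since $(\Omega,k,\psi)$ is itself a model (not merely a semi-model) by assumption; this is what allows Corollary \ref{inverse} to kick in and guarantee that $\Psi$ is invertible rather than merely surjective.
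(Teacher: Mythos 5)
Your proof is correct and follows essentially the same route as the paper: the paper deduces this corollary directly from Corollary \ref{unique-model}, which is itself established by exactly the double application of Proposition \ref{uniq} combined with Corollary \ref{inverse} that you spell out. Your only difference is that you unwind that uniqueness statement explicitly in the case of a common base space, which is fine.
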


By Lemma \ref{uni-abs}, in order for $f\in \Hol(X,X)$ to admit  a model, a necessary condition is that  there exists an $f$-absorbing open subset $A\subset X$ such that $f|_A$ is univalent. The following result shows that  such a condition is also   sufficient.

\begin{theorem}\label{tifa}
Let $X$ be a complex manifold. Let $f\in \Hol(X,X)$. Suppose there exists an $f$-absorbing domain $A\subset X$ such that $f|_A$ is univalent. Then there exists a model $(\Omega, h,\psi)$ for $f$.
\end{theorem}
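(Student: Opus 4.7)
The plan is to construct $(\Omega, h, \psi)$ as an abstract basin of attraction, following Cowen's strategy in dimension one. I would first define $\Omega$ as the quotient of $\bigsqcup_{n \geq 0} A_n$ (with each $A_n$ a copy of $A$) by the equivalence relation generated by $(x,n) \sim (f(x), n+1)$ for $x \in A$ and $n \geq 0$. Because $f|_A$ is univalent, the chain $\{(x,n),(f(x),n+1),(f^2(x),n+2),\ldots\}$ representing the class of $(x,n)$ meets each level $A_m$ with $m \geq n$ in at most one point, so two distinct points $(x,n),(y,n) \in A_n$ cannot be identified in $\Omega$. This is the heart of the verification that $\Omega$ is Hausdorff: disjoint neighborhoods of $x,y$ in $A$ yield disjoint neighborhoods of $[x,n],[y,n]$ in $\Omega$.

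Next, the natural maps $\iota_n\colon A \to \Omega$ defined by $\iota_n(x) = [x,n]$ provide an open cover of $\Omega$, and their transition functions are iterates of $f|_A$, hence biholomorphisms onto their images. This endows $\Omega$ with the structure of a complex manifold of dimension equal to $\dim X$. I would then define $\psi\colon \Omega \to \Omega$ by $\psi([x,n]) = [f(x),n]$ with inverse $\psi^{-1}([x,n]) = [x,n+1]$; one checks these are well defined on equivalence classes, and in every chart $\iota_n$ the map $\psi$ coincides with $f|_A$, hence is a biholomorphism of $\Omega$.

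For the intertwining map, I would exploit the $f$-absorbing property: for each $x \in X$ choose any $m \in \N$ with $f^m(x) \in A$ and set $h(x) := [f^m(x),m]$, which is independent of the choice of $m$ by the defining equivalence. On the open set $f^{-m}(A) \subset X$ one has $h = \iota_m \circ f^m$, so $h$ is holomorphic. The semi-conjugation $h \circ f = \psi \circ h$ is immediate from the definitions, and $\bigcup_{n \geq 0} \psi^{-n}(h(X)) = \Omega$ holds because $[x,m] = \psi^{-m}([x,0]) = \psi^{-m}(h(x))$ for every $x \in A$. Finally, $h|_A = \iota_0$ is univalent, so $(\Omega,h,\psi)$ is a model.

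The main technical obstacle is verifying Hausdorffness of $\Omega$ and checking that the charts $\iota_n$ glue into a genuine complex manifold; both hinge on the univalence of $f|_A$, which is precisely what prevents distinct points in a single level $A_n$ from being identified via a chain running through higher levels. Once these point-set and complex-analytic issues are settled, all remaining verifications are direct consequences of the construction.
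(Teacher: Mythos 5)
Your construction is exactly the paper's: the same abstract basin of attraction $\Omega = (A\times\N)/\!\sim$ with the equivalence generated by $(x,n)\sim(f(x),n+1)$, the same charts $\iota_n$ with transition maps given by iterates of $f|_A$, the same shift automorphism $\psi$, and the same extension of $h$ to all of $X$ via the absorbing property. All the verifications you outline (Hausdorffness via univalence of $f|_A$, well-definedness of $h$, the semi-conjugation, and the exhaustion $\bigcup_n\psi^{-n}(h(X))=\Omega$) match the paper's argument, so the proposal is correct and essentially identical in approach.
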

\begin{proof}
Endow $\N$ with the discrete topology and consider the equivalence relation $\sim$ on $A\times \N$ given as follows:
$(x,m)\sim (y,n)$  if and only if there exists $u\in \N$, $u\geq m,n$ such that $f^{u-m}(x)=f^{u-n}(y)$.

Let $\Omega:= A\times \N/_\sim$ endowed with the quotient topology. Let $\pi\colon A\times \N\to \Omega$ be the natural projection and for $n\in \N$ let $h_n \colon A \to \Omega$ be defined as $h_n(x):= \pi (x,n)$. Clearly $h_n$ is  continuous and open for all $n\in \N$. Moreover each  $h_n$  is injective since $f$ and   all its iterates are injective on $A$. Thus  each  $h_n$ is a homeomorphism on its image. Set $\Omega_n:= h_n(A)$. Since  $\cup_{n\in \N} \Omega_n=\Omega$,  it follows easily that $\Omega$ is a Hausdorff, second countable, arcwise-connected topological space. Notice moreover that
\begin{equation}\label{associazione}
h_n=h_m\circ f^{m-n}|_A,\quad 0\leq n\leq m\in \N.
\end{equation}
Therefore, $\{(A, h_n)\}_{n\in \N}$ is an atlas for $\Omega$ which gives $\Omega$ a structure of complex manifold of the same dimension of $X$. The maps $h_n$ are thus univalent. Equation (\ref{associazione}) implies that $\Omega_n\subset \Omega_m$ for all $0\leq n\leq m$.

Define $$\psi(z)= h_0\circ h_1^{-1}(z),\quad z\in \Omega_1.$$ We can extend this mapping to $\Omega_2$ by setting  $$\psi(z)= h_1\circ h_2^{-1}(z),\quad z\in \Omega_2.$$
The map $\psi$ is well-defined. Indeed, by \eqref{associazione}, it follows $h_2^{-1}\circ h_1= h_1^{-1}\circ h_0$ and hence for all $z\in \Omega_1,$
$$h_1\circ h_2^{-1}(z)=h_1\circ h_2^{-1}\circ h_1\circ h_1^{-1}(z)=h_1\circ h_1^{-1}\circ h_0\circ h_1^{-1}(z)=h_0\circ h_1^{-1}(z).$$

We can then extend $\psi$ on $\Omega_n$ inductively as $\psi(z)=h_{n-1}\circ h_n^{-1}(z)$ for $z\in \Omega_n$.  It is clear that $\psi \colon \Omega\to \Omega$ is an automorphism.

Let $h:=    h_0\colon A\to \Omega$. Then by \eqref{associazione}
\begin{equation}
\label{onA}
\psi\circ h=h_0\circ h_1^{-1}\circ h_0=h_0\circ f|_A=h\circ f|_A.
\end{equation}

Now we extend $h$ to a holomorphic map, still denoted by the same name, $h: X \to \Omega$ such that $\psi\circ h=h\circ f$ on $X$. In order to do this, let $z\in X$. Then there exists $m\in \N$ such that $f^m(z)\in A$. We set
\[
h(z):=    \psi^{-m} (h_0(f^m(z))).
\]
By \eqref{onA} it is easy to see that $h$ is well defined, holomorphic and  intertwines   $f$ with $\psi$. Moreover, $h|_A=h_0\colon A\to \Omega$ is univalent.

Moreover, by the very definition of $\psi$ we have $h_n=\psi^{-n}\circ h|_A$ for all $n\geq 0$ and $\cup_{n\in \N} h_n(A)=\Omega$. Hence
\begin{equation}\label{2star}
\bigcup_{n\in \N} \psi^{-n}(h(A))=\Omega,
\end{equation}
and the proof is concluded.
\end{proof}

\begin{remark}
By Corollary \ref{unique-model} the model $(\Omega,h,\psi)$ constructed in Theorem \ref{tifa} does not depend on the set $A$.
\end{remark}

\begin{remark}
The construction in Theorem \ref{tifa} relies on the  abstract basins of attraction of a univalent self-map (see, {\sl e.g.}, \cite{Aro1,ABW,Fo-St}). A similar construction, together with the uniformization theorem, was employed in \cite{Cowen} for constructing models for holomorphic self-maps of the unit disc. In that case it is known that,  except for mappings with one fixed point in the unit disc of multiplicity strictly greater than $1$, every holomorphic self-map $f$ of the unit disc admits a simply connected $f$-absorbing domain   on which it is univalent (see \cite[Thm. 2]{Po1}). In higher dimension, even for the unit ball, the existence of such an $f$-absorbing domain is in general false (for example if the map has non maximal rank at every point).
\end{remark}

The hyperbolic geometry of the base space of a model depends on the dynamics of the map, as the following result shows.
\begin{lemma}\label{climbhazard}
Let $X$ be a  complex manifold and let $f\in \Hol(X,X)$. Assume that $f$ admits a model  $(\Omega,h,\psi)$ and let $A\subset X$ be an $f$-absorbing domain such that $h|_A$ is univalent. Then
\begin{equation}\label{gia}
h^\ast\kappa_\Omega=\lim_{m\to\infty}(f^{m})^*\kappa_{A}=\lim_{m\to\infty}(f^{m})^*\kappa_{X},
\end{equation}
and
\begin{equation}\label{barret}
h^\ast k_\Omega=\lim_{m\to\infty}(f^{m})^*k_{A}=\lim_{m\to\infty}(f^{m})^*k_{X}.
\end{equation}
In particular,  $c(f|_A)=c(\psi)=c(f).$
\end{lemma}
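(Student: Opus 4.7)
The plan is to prove the infinitesimal identity \eqref{gia} first, then the integrated version \eqref{barret} by the same template, and finally deduce the divergence rate equalities from Proposition \ref{base}. Throughout, the key ingredients are: $\psi\in\Aut(\Omega)$ is a $\kappa_\Omega$- and $k_\Omega$-isometry, $h|_A\colon A\to h(A)$ is a biholomorphism, and $\Omega=\bigcup_n\Omega_n$ with $\Omega_n:=\psi^{-n}(h(A))$ is an increasing exhaustion by open subdomains. The main obstacle is one of direction: the trivial contractivity $h^*\kappa_\Omega\le\kappa_X$ goes the wrong way, and to close the gap one has to read $\kappa_\Omega$ through the exhaustion $\{\Omega_n\}$.

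First, the sequences $(f^m)^*\kappa_X$ and $(f^m)^*\kappa_A$ (the latter defined pointwise once $f^m(x)\in A$, which happens eventually for every $x\in X$ since $A$ is $f$-absorbing) are monotonically decreasing in $m$ by holomorphic contractivity of $\kappa$ under $f$, so the limits exist. From the intertwining $h\circ f=\psi\circ h$ one has $h=\psi^{-m}\circ h\circ f^m$ on $X$; pulling back $\kappa_\Omega$ and using that $\psi^{-m}$ is a $\kappa_\Omega$-isometry yields the $f$-invariance $h^*\kappa_\Omega=(f^m)^*(h^*\kappa_\Omega)$. Combined with $h^*\kappa_\Omega\le\kappa_X$, this gives $h^*\kappa_\Omega=(f^m)^*(h^*\kappa_\Omega)\le (f^m)^*\kappa_X$ for every $m$, so $h^*\kappa_\Omega\le\lim_m(f^m)^*\kappa_X$.

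For the reverse, exploit the exhaustion. The standard fact $\kappa_\Omega=\lim_n\kappa_{\Omega_n}$ pointwise (any analytic disc into $\Omega$ has relatively compact image, hence lies in some $\Omega_n$) reduces $\kappa_\Omega$ to the $\kappa_{\Omega_n}$. Since both $\psi^{-n}\colon h(A)\to\Omega_n$ and $h|_A\colon A\to h(A)$ are biholomorphisms, composing them and using $\psi^n\circ h|_A=h\circ f^n$ gives, for $x\in A$ and $v\in T_xA$,
\[
\kappa_{\Omega_n}(h(x),dh_x(v))=\kappa_A(f^n(x),df^n_x(v))=((f^n)^*\kappa_A)_x(v).
\]
Letting $n\to\infty$ yields $(h^*\kappa_\Omega)_x(v)=\lim_n((f^n)^*\kappa_A)_x(v)$ on $A$, and the $f$-invariance already noted propagates this identity to all of $X$ (given $x\in X$, pick $m_0$ with $f^{m_0}(x)\in A$ and push both sides back by $f^{m_0}$). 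Since the inclusion $A\hookrightarrow X$ is contractive one has $\kappa_X\le\kappa_A$ on $A$, whence $(f^m)^*\kappa_X\le(f^m)^*\kappa_A$ where both are defined, and assembling everything gives the chain
\[
h^*\kappa_\Omega\;\le\;\lim_m(f^m)^*\kappa_X\;\le\;\lim_m(f^m)^*\kappa_A\;=\;h^*\kappa_\Omega,
\]
forcing equality and proving \eqref{gia}.

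The distance identity \eqref{barret} follows by the same three-step template, replacing $\kappa$ by $k$ throughout and using that (a) holomorphic maps contract $k$, (b) biholomorphisms are $k$-isometries, and (c) $k_\Omega(p,q)=\lim_n k_{\Omega_n}(p,q)$ as soon as $p,q\in\Omega_{n_0}$, since any chain of analytic discs realizing $k_\Omega(p,q)$ up to $\varepsilon$ consists of finitely many discs whose images together lie in a compact subset of $\Omega$, hence in some $\Omega_n$. For the divergence rates, evaluate \eqref{barret} at $p=x\in A$, $q=f^\ell(x)$: since $h(f^\ell(x))=\psi^\ell(h(x))$,
\[
k_\Omega(h(x),\psi^\ell(h(x)))=\lim_m k_X(f^m(x),f^{m+\ell}(x))=s_\ell(x),
\]
and the same computation with $k_A$ in place of $k_X$ shows that $\lim_m k_A(f^m(x),f^{m+\ell}(x))$ also equals $s_\ell(x)$. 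Dividing by $\ell$ and letting $\ell\to\infty$, Proposition \ref{base} applied to $(X,f)$, $(A,f|_A)$ and $(\Omega,\psi)$ respectively gives $c(f)=c(f|_A)=c(\psi)$.
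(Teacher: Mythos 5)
Your proof is correct and follows essentially the same route as the paper's: one inequality comes from reading $k_\Omega$ (resp. $\kappa_\Omega$) through the exhaustion $\Omega=\bigcup_n\psi^{-n}(h(A))$ and the biholomorphisms $\psi^{-n}\circ h|_A$, the other from the isometry property of $\psi$ together with contractivity of $h$, and the divergence-rate identities then follow from Proposition \ref{base} via $s_\ell(x)=k_\Omega(h(x),\psi^\ell(h(x)))$. The only cosmetic difference is that you treat the infinitesimal version \eqref{gia} in detail and deduce \eqref{barret} by the same template, whereas the paper does the reverse.
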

\begin{proof}
We give the proof of  \eqref{barret} as  \eqref{gia} can be obtained by minor modifications. By Lemma \ref{uni-abs}  the complex manifold $\Omega$ is the union of the growing sequence of the domains $\psi^{-m}(h(A))$, thus for all $z,w \in X$
\begin{equation*}
\begin{split}
k_\Omega(h(z),h(w))&=\lim_{m\to\infty}k_{\psi^{-m}(h(A))}(h(z),h(w))
=\lim_{m\to\infty}k_{h(A)}(\psi^m(h(z)),\psi^m(h(w)))\\&=\lim_{m\to\infty}k_{h(A)}(h(f^m(z)),h(f^m(w)))=\lim_{m\to\infty}k_{A}(f^m(z),f^m(w))\\
&\geq \lim_{m\to\infty}k_{X}(f^m(z),f^m(w)),
\end{split}
\end{equation*}
where the last inequality follows from the fact that $k_A\geq k_X$ since $A\subset X$.

On the other hand, since $\psi$ is an isometry for $k_\Omega$,
\begin{equation*}
\begin{split}
k_\Omega(h(z),h(w))&=\lim_{m\to\infty}k_{\Omega}(\psi^m(h(z)),\psi^m(h(w)))\\&
=\lim_{m\to\infty}k_{\Omega}(h(f^m(z)),h(f^m(w)))\leq  \lim_{m\to\infty}k_{X}(f^m(z),f^m(w)).
\end{split}
\end{equation*}

Next, denoting by $s_m(x)$ the $m$-th hyperbolic step of $f$ and by $s_m^A(x)$ the $m$-th hyperbolic step of $f|_A$, equation \eqref{barret} implies that for all $x\in A$
\begin{equation}
s_m^A(x)=k_\Omega(h(x),\psi^m(h(x))=s_m(x).
\end{equation}
Hence Proposition \ref{base} yields $c(f|_A)=c(\psi)=c(f).$
\end{proof}

\section{Canonical Kobayashi hyperbolic semi-models}\label{kob-semi}
Let $X$ be a complex manifold. We denote by $\kappa_X$ the Kobayashi pseudo-metric of $X$, and by $k_X$ the Kobayashi pseudo-distance.
Let $f\in \Hol(X,X)$. Suppose there exists an $f$-absorbing domain   $A\subset X$ such that $f|_A$ is univalent,  and assume $(\Omega,h,\psi)$ is a model for $f$. Then $\Omega$ is the growing union of domains biholomorphic to $A$. Complex manifolds which are growing union of domains biholomorphic to a given manifold have been studied by  Forn\ae ss and Sibony \cite{FS}, who proved the following result.
\begin{theorem}[Forn\ae ss-Sibony]\label{aeris}
Let $M$ be a hyperbolic complex manifold of dimension $q$ and assume that $M/{\sf Aut}(M)$ is compact. Let $\Omega$ be a complex manifold such that $\Omega=\cup_{n\in \N} \Omega_n$, where $\Omega_n$ is  a domain biholomorphic to $M$ and $\Omega_n\subset \Omega_{n+1}$ for all  $n\in \N$. Then there exist  $r\in \Hol(\Omega,M)$ and a holomorphic retract $Z$ of $M$ of dimension $k$ such that the following holds.
\begin{itemize}
\item[i)] The map $r$ has constant rank $k$ and $\kappa_\Omega=r^\ast \kappa_M$ has constant corank $q-k$.
\item[ii)]  We have $r(\Omega)=Z$ and if $k=q$, then $Z=M$.
\item[iii)] The fibers of $r$ are $(q-k)$-dimensional connected complex submanifolds of $\Omega$ which are topologically cells. Moreover $\kappa_{r^{-1}(z)}\equiv 0$ for all $z\in M$.
\item[iv)]  If $M$ is the ball $\B^q$, then $Z$ is biholomorphic to $\B^k$. If $M$ is the polydisc $\D^q$, then $Z$ is biholomorphic to $\D^k$.
\item[v)] If $k=q$, then $\Omega$ is biholomorphic to $M$.
\end{itemize}
\end{theorem}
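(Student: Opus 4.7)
The plan is to construct $r : \Omega \to M$ as a normal-families limit of the coordinate transitions along the exhaustion, and then to recognise $Z := r(\Omega)$ as a holomorphic retract of $M$. The cocompactness of $\Aut(M)$ on $M$ is used at every step to prevent the construction from degenerating.

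Fix biholomorphisms $\phi_n : M \to \Omega_n$, a base point $x_0 \in \Omega_0$, and a compact set $K \subset M$ with $\Aut(M)\cdot K = M$. Post-composing each $\phi_n$ with a suitable automorphism of $M$, I arrange that $\phi_n^{-1}(x_0)\in K$ for every $n$. Put $g_n := \phi_{n+1}^{-1}\circ\phi_n \in \Hol(M,M)$ and $G_{N,n} := g_{n-1}\circ\cdots\circ g_N$. The normalisation forces the orbits $G_{N,n}(\phi_N^{-1}(x_0))$ to remain in $K$, and since $M$ is taut (hyperbolic plus cocompact $\Aut(M)$-action), the family $\{G_{N,n}\}_{n\ge N}$ is normal. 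A diagonal extraction produces limits $G_N : M\to M$ satisfying the compatibility $G_N = G_{N+1}\circ g_N$, which glue into a well-defined holomorphic map $r : \Omega \to M$ with $r\circ\phi_N = G_N$.

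Next I would prove the identity $\kappa_\Omega = r^{\ast}\kappa_M$ and the constancy of $\rk(dr)$. Each $G_{N,n}$ is a $\kappa_M$-contraction, so passing to the limit in $\kappa_\Omega = \lim_n \kappa_{\Omega_n}$ yields $r^{\ast}\kappa_M \le \kappa_\Omega$; the reverse inequality comes from the biholomorphism $\phi_N : (M,\kappa_M) \to (\Omega_N,\kappa_{\Omega_N})$ together with the monotone decrease $\kappa_{\Omega_n}\searrow\kappa_\Omega$. Constancy of $\rk(dr)$ is the central technical step and, I expect, the \emph{main obstacle}: given $p,q\in\Omega$, pick $N$ with both in $\Omega_N$, use $\Aut(\Omega_N)\cong\Aut(M)$ to move them into $\phi_N(K)$, and compare differentials by a further normal-family argument. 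Without cocompactness the rank could be merely lower-semicontinuous; cocompactness is precisely what upgrades it to be globally constant. Set $k := \rk(dr)$; this proves (i) and identifies $Z := r(\Omega)$ as a $k$-dimensional subset of $M$.

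Finally, shifting the exhaustion index by one step and rerunning the diagonal limit exhibits $G_N|_Z$ as a limit of identity maps, so $G_N|_Z = \mathrm{id}_Z$ and $Z$ is a holomorphic retract of $M$, yielding (ii). The fibres $r^{-1}(z)$ are $(q-k)$-dimensional connected complex submanifolds on which $\kappa$ vanishes by (i); they are topological cells because each is an ascending union of sets sitting inside the copies of $M$ in the exhaustion, giving (iii). Statement (iv) is then the classical classification of holomorphic retracts of $\B^q$ (as totally geodesic balls $\B^k$, due to Rudin) and of $\D^q$ (as coordinate polydiscs $\D^k$). For (v), when $k=q$ each $G_N$ is a full-rank holomorphic retraction of $M$ onto itself, hence an automorphism, so $r$ is a local biholomorphism; the identity $\kappa_\Omega = r^{\ast}\kappa_M$ makes $\kappa_\Omega$ a genuine distance on $\Omega$, forcing $r$ to be injective and therefore a biholomorphism.
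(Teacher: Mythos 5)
A preliminary remark: the paper does not prove this statement at all --- it is quoted as a black box from Forn\ae ss--Sibony \cite{FS} --- so there is no internal proof to compare with, and you are in effect reconstructing the original 1981 argument. Your overall architecture (normalise the transition maps $g_n=\phi_{n+1}^{-1}\circ\phi_n$ so that a marked orbit stays in a compact set meeting every $\Aut(M)$-orbit, use tautness of $M$ to extract $G_N=\lim_n\phi_n^{-1}\circ\phi_N$, glue these into $r$, and identify $\kappa_\Omega$ with $r^\ast\kappa_M$ via $\kappa_\Omega=\lim_n\kappa_{\Omega_n}$) is indeed the right strategy and matches the source.

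There is, however, a genuine error at the step producing the retract. You assert that $G_N|_Z=\mathrm{id}_Z$, so that $G_N$ itself retracts $M$ onto $Z$. This is false: take $M=\D$, $\Omega_n=\{|\zeta|<r_n\}$ with $r_n\uparrow 1$ and $\phi_n(\zeta)=r_n\zeta$ (the normalisation $\phi_n^{-1}(0)=0$ holds). Then $G_{N,n}(\zeta)=(r_N/r_n)\zeta\to r_N\zeta=G_N(\zeta)$, so $r=\mathrm{id}_\D$ and $Z=\D$, yet no $G_N$ is the identity anywhere. The maps $G_N=r\circ\phi_N$ are retractions only in the limit $N\to\infty$: the correct step is to extract a further normal-families limit $\rho=\lim_j G_{N_j}$ (possible because each $G_{N_j}$ sends a point of $K$ to the fixed value $r(x_0)$), pass to the limit in the cocycle identity $G_{n,m}\circ G_{N,n}=G_{N,m}$ to get $\rho\circ G_N=G_N$ for every $N$, hence $\rho|_{Z}=\mathrm{id}_Z$ and $\rho\circ\rho=\rho$, and then $Z=\rho(M)$ is the holomorphic retract. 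This missing map $\rho$ is also what your sketch needs elsewhere: in (v), a full-rank holomorphic self-map of $M$ need not be an automorphism (again $\zeta\mapsto r_N\zeta$), and $\kappa_\Omega=r^\ast\kappa_M$ being nondegenerate does not by itself make $r$ injective; one uses that $\rho=\mathrm{id}_M$ when $k=q$. Two further points are asserted rather than proved: the constancy of $\rk (dr)$ (which you rightly flag as the crux, but ``moving points by $\Aut(\Omega_N)$'' is not obviously legitimate, since those automorphisms neither extend to $\Omega$ nor commute with $r$), and in (iii) the vanishing of the \emph{intrinsic} metric $\kappa_{r^{-1}(z)}$, which does not follow from the vanishing of $\kappa_\Omega$ along the fibre because $\kappa_{r^{-1}(z)}\geq\kappa_\Omega|_{r^{-1}(z)}$.
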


\begin{remark}\label{retrazione}
Notice that, since $Z$ is a holomorphic retract of $M$, it follows that $k_M|_Z=k_Z$ and $\kappa_M|_Z=\kappa_Z$. From i) of Theorem \ref{aeris} it follows immediately that $r\colon \Omega\to Z$ satisfies
\begin{equation}\label{eins}
r^* \kappa_Z=\kappa_\Omega,
\end{equation} and
\begin{equation}\label{zwei}
r^* k_Z=k_\Omega.
\end{equation}
\end{remark}

We start examining the case of hyperbolic base space:

\begin{proposition}\label{hyper-model}
Let $X$ be a complex manifold. Let $f\in \Hol(X,X)$. Suppose $A\subset X$ is an $f$-absorbing hyperbolic domain such that $f|_A$ is univalent and  $A/{\sf Aut}(A)$ is compact.  Let $(\Omega,h,\psi)$ be a model for $f$. Then the following are equivalent:
\begin{enumerate}
  \item $\Omega$ is hyperbolic,
  \item the corank of $\kappa_\Omega$ is zero,
  \item $\Omega$ is biholomorphic to $A$,
\item there exists $x\in X$ such that $\lim_{m\to\infty}\kappa_X(f^m(x),(df^m)_x v)> 0$ for all $v\in T_xX\setminus\{ 0\}$,
 \item for every $x,y\in X$ we have $\lim_{m\to\infty}k_X(f^m(x),f^m(y))>0,$
   \item there exists a model for $f$ with hyperbolic base space.
\end{enumerate}
\end{proposition}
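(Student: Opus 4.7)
The plan is to split the proof into three independent parts: the cycle $(3) \Rightarrow (1) \Rightarrow (2) \Rightarrow (3)$ via the Forn\ae ss--Sibony structure theorem; the equivalence $(1) \Leftrightarrow (6)$ via uniqueness of the model; and the equivalences $(1) \Leftrightarrow (4)$, $(1) \Leftrightarrow (5)$ via the identity in Lemma \ref{climbhazard}. Since $\Omega = \bigcup_n \psi^{-n}(h(A))$ is an increasing union of domains biholomorphic to $A$, Theorem \ref{aeris} applies with $M = A$ and produces a holomorphic submersion $r\colon \Omega \to Z$ onto a holomorphic retract $Z \subset A$ of dimension $k$, with $\kappa_\Omega = r^\ast \kappa_Z$ of constant corank $q - k$. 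From this the cycle is straightforward: $(3) \Rightarrow (1)$ is trivial since $A$ is hyperbolic; $(2) \Rightarrow (3)$ follows from item (v) of Theorem \ref{aeris}, as corank zero forces $k = q$; and $(1) \Rightarrow (2)$ holds because, by item (iii), a positive corank would make the fibers of $r$ positive-dimensional complex submanifolds on which the intrinsic Kobayashi pseudo-metric, and hence the intrinsic Kobayashi pseudo-distance, vanishes identically; the distance-decreasing property of the inclusion of a fiber in $\Omega$ would then force $k_\Omega$ to vanish on pairs of distinct points of the fiber, contradicting hyperbolicity. The equivalence $(1) \Leftrightarrow (6)$ is then immediate from Corollary \ref{unique-model}: any model with hyperbolic base space is isomorphic to $(\Omega, h, \psi)$, forcing $\Omega$ to be hyperbolic.

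For $(1) \Leftrightarrow (4)$ and $(1) \Leftrightarrow (5)$, the central tool is Lemma \ref{climbhazard}, which rewrites $\lim_{m \to \infty} \kappa_X(f^m(x), (df^m)_x v) = \kappa_\Omega(h(x), dh_x v)$ and $\lim_{m \to \infty} k_X(f^m(x), f^m(y)) = k_\Omega(h(x), h(y))$. For $(1) \Rightarrow (4)$, pick any $x \in A$: since $h|_A$ is univalent and $\dim A = \dim \Omega$, the differential $dh_x$ is a linear isomorphism, and hyperbolicity of $\Omega$ (equivalently, nondegeneracy of $\kappa_\Omega$) gives $\kappa_\Omega(h(x), dh_x v) > 0$ for every $v \neq 0$. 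Conversely, $(4)$ forces $dh_x$ to be injective (else the limit would vanish on $\ker dh_x$), hence a bijection between equidimensional spaces, so $\kappa_\Omega$ is nondegenerate at $h(x)$; the constancy of the corank from Theorem \ref{aeris}(i) then promotes this to $k = q$, giving $(2)$. For $(1) \Leftrightarrow (5)$, given distinct $p, q \in \Omega$, pick $n$ so that $\psi^n p, \psi^n q \in h(A)$ and write them as $h(x), h(y)$; the injectivity of $h|_A$ forces $x \neq y$ in $A$, so $(5)$ combined with the identity yields $k_\Omega(\psi^n p, \psi^n q) > 0$, and then the isometry property of $\psi$ with respect to $k_\Omega$ gives $k_\Omega(p, q) > 0$, so $\Omega$ is hyperbolic. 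The converse direction uses the same identity restricted to distinct pairs in $A$.

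The principal obstacle is converting the pointwise (resp.\ orbit-restricted) positivity statements in $(4)$ and $(5)$ into the global structural statement $k = q$ in $(2)$. This is precisely the role played by the constancy of the corank of $\kappa_\Omega$ from Theorem \ref{aeris}(i), together with the fact that $\psi \in \Aut(\Omega)$ is a $k_\Omega$-isometry: these two features allow us to transport the nondegeneracy we verify on $h(A)$---where the univalence of $h|_A$ and the equidimensionality $\dim A = \dim \Omega$ give us full control---to all of $\Omega$ via the exhausting sequence $\psi^{-n}(h(A))$.
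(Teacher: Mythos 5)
Your proof is correct and takes essentially the same route as the paper's, which simply invokes Theorem \ref{aeris} for the equivalence of (1)--(3), Lemma \ref{climbhazard} for the equivalence with (4) and (5), and Corollary \ref{unique-model} for (6); you have merely supplied the details the paper leaves implicit. The only caveat is that (5) must be read for $x\neq y$ (and your verification of $(1)\Rightarrow(5)$, like the paper's, really covers pairs with distinct orbits, e.g.\ pairs in $A$), an imprecision inherited from the statement itself rather than introduced by your argument.
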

\begin{proof}
By Theorem \ref{aeris} we have that (1), (2) and (3) are equivalent.  By Lemma \ref{climbhazard}, (2) is equivalent to (4) and (5). Finally (6) and (1) are equivalent by Corollary \ref{unique-model}.
\end{proof}

Proposition \ref{hyper-model} and Lemma \ref{climbhazard} show that even if the complex structure of the base space $\Omega$ of the model depends only on the dynamics of $f|_A$, its hyperbolic geometry  is related to  the hyperbolic geometry of the whole $X$.

\begin{example}
Let $f\in \Hol(\H, \H)$ be defined by $f(\zeta)=\zeta+i$. Since $f$ extends to a univalent mapping $\tilde{f}(\zeta)=\zeta+i$ of $\C$ to itself and $\H$ is absorbing for $\tilde f$. Since $k_{\C}\equiv 0$, it follows from Proposition \ref{hyper-model} (or from the fact that every holomorphic mapping from $\C$ to a hyperbolic Riemann surface is constant) that  $\tilde{f}$  does not admit any model with hyperbolic base space, which implies that neither does $f$.
\end{example}

The next example shows that there exists a self-map $f$ of a hyperbolic manifold $X$ which admits a model whose base space is hyperbolic but is not biholomorphic to $X$.
\begin{example}
Let $\lambda>1$ and set $X:=\Ha\setminus\{i \lambda^{-n}\}_{n\in \N}$. Let $f(\zeta):=\lambda \zeta$. We can take $A:=\{\zeta\in \C: \Im \zeta>1\}$. It is easy to see that $A$ is $f$-absorbing and, since $A$ is biholomorphic to the unit disc, Proposition \ref{hyper-model} applies.  In such a case, $f|_A:A \to A$ is a ``hyperbolic self-map'' and, as we show in Section \ref{hyper-balla} (or see \cite{Cowen}), the base space $\Omega$ is biholomorphic to the unit disc. Notice that  $f$ is univalent in $X$, but $X/{\sf Aut}(X)$ is not compact.
\end{example}

\begin{remark}
Proposition \ref{hyper-model} has the following consequence. Let $X$ be a complex manifold and let $f\in \Hol(X,X)$. If $f$ admits a model $(\Omega,h,\psi)$ such that $\Omega$ is hyperbolic, then {\sl every} $f$-absorbing hyperbolic domain $A\subset X$ such that $f|_A$ is univalent and $A/{\sf Aut}(A)$ is compact is biholomorphic to $\Omega$.
\end{remark}

If the corank of $\kappa_\Omega$ is greater than zero, it is not possible to find a model whose base space is hyperbolic. Anyway, the next result shows that there exists a semi-model whose base space is hyperbolic and which is, in some sense, canonical. Such a semi-model is the maximum of all  semi-models with hyperbolic base space with respect to the partial ordering introduced in Remark \ref{ordering}.
\begin{definition}
Let $X$ be a complex manifold and let $f\in \Hol(X,X)$. Let $(\Gamma, g,\alpha)$ be a semi-model  $(\Gamma, g,\alpha)$ with $\Gamma$ hyperbolic. We say that  $(\Gamma, g,\alpha)$ is a {\sl canonical Kobayashi hyperbolic semi-model} if for any semi-model
$(\Lambda, k,\varphi )$ for $f$ such that  $\Lambda$ is hyperbolic,  there exists a morphism of models $\hat\sigma\colon (\Gamma, g,\alpha)\to (\Lambda, k,\varphi )$.
 \end{definition}

The uniqueness of  the canonical Kobayashi hyperbolic semi-model follows at once by Corollary \ref{inverse}:

\begin{proposition}\label{unico-semimodel}
Let $X$ be a complex manifold and let $f\in \Hol(X,X)$.  Suppose  $(\Gamma, g,\alpha)$ is a canonical Kobayashi hyperbolic semi-model for $f$. Then $(\Gamma, g,\alpha)$ is unique up to isomorphisms of models.
\end{proposition}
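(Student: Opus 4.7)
The plan is to deduce uniqueness directly from the defining universal property together with the rigidity results already established for morphisms of semi-models, namely Lemma \ref{two-one} and Corollary \ref{inverse}.

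First I would take two canonical Kobayashi hyperbolic semi-models $(\Gamma_1, g_1, \alpha_1)$ and $(\Gamma_2, g_2, \alpha_2)$ for $f$. Since $\Gamma_2$ is hyperbolic, $(\Gamma_2, g_2, \alpha_2)$ is, in particular, a semi-model for $f$ with hyperbolic base space; applying the universal property of $(\Gamma_1, g_1, \alpha_1)$ yields a morphism of models
\[
\hat\sigma\colon (\Gamma_1, g_1, \alpha_1)\to (\Gamma_2, g_2, \alpha_2).
\]
Exchanging the roles of the two semi-models (using that $\Gamma_1$ is hyperbolic and that $(\Gamma_2, g_2, \alpha_2)$ is canonical) produces a morphism in the opposite direction
\[
\hat\beta\colon (\Gamma_2, g_2, \alpha_2)\to (\Gamma_1, g_1, \alpha_1).
\]

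Then I would invoke Corollary \ref{inverse}, which states precisely that a pair of morphisms between two semi-models in opposite directions must be mutual inverses, whence $\hat\sigma$ and $\hat\beta$ are isomorphisms of models and the two canonical Kobayashi hyperbolic semi-models are isomorphic. There is essentially no obstacle here: the proposition is a formal consequence of the universal property built into the definition and the categorical rigidity already proved in Section \ref{sec-model}. The only point requiring a brief remark is that the hypothesis of the universal property (hyperbolicity of the target's base space) is satisfied on both sides, which is immediate because both semi-models are assumed to be Kobayashi hyperbolic.
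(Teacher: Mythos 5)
Your argument is correct and is exactly the paper's: the definition of a canonical Kobayashi hyperbolic semi-model yields morphisms in both directions between any two such semi-models, and Corollary \ref{inverse} then forces them to be mutually inverse isomorphisms. Nothing is missing.
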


Again we have essential uniqueness  for the intertwining mapping once the base space and the automorphism are fixed.
\begin{corollary}
Let $X$ be a complex manifold and let $f\in \Hol(X,X)$. Let $(\Gamma, g,\alpha)$ and $(\Gamma, k, \alpha)$ be two canonical Kobayashi hyperbolic semi-models for $f$. Then  there exists an automorphism $\eta$ of $\Gamma$ commuting with $\alpha$ such that $g=\eta\circ k$.
\end{corollary}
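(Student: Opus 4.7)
The plan is to derive this corollary directly from the categorical machinery already assembled in Section \ref{sec-model}: the universal property that defines a canonical Kobayashi hyperbolic semi-model plus the rigidity lemmas for morphisms of semi-models. Nothing new about the Kobayashi geometry of $\Gamma$ will be needed.

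First, I invoke the universal property of each of the two semi-models. Since $(\Gamma, g, \alpha)$ is a canonical Kobayashi hyperbolic semi-model for $f$ and $(\Gamma, k, \alpha)$ is a semi-model with hyperbolic base space $\Gamma$, there exists a morphism of semi-models
\[
\hat{\sigma}_1 \colon (\Gamma, g, \alpha) \to (\Gamma, k, \alpha),
\]
given by a holomorphic map $\sigma_1 \colon \Gamma \to \Gamma$ satisfying $\sigma_1 \circ g = k$ and $\sigma_1 \circ \alpha = \alpha \circ \sigma_1$. Swapping the roles of the two semi-models and using that $(\Gamma, k, \alpha)$ is also canonical, I obtain a second morphism
\[
\hat{\sigma}_2 \colon (\Gamma, k, \alpha) \to (\Gamma, g, \alpha),
\]
with $\sigma_2 \circ k = g$ and $\sigma_2 \circ \alpha = \alpha \circ \sigma_2$.

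Now I apply Corollary \ref{inverse}: the existence of the two morphisms $\hat{\sigma}_1$ and $\hat{\sigma}_2$ forces them to be mutual inverses in the category of semi-models. In particular the underlying holomorphic maps $\sigma_1$ and $\sigma_2$ are biholomorphisms of $\Gamma$, i.e.\ automorphisms. Setting $\eta := \sigma_2$, the relation $\sigma_2 \circ k = g$ gives exactly $g = \eta \circ k$, while the identity $\eta \circ \alpha = \alpha \circ \eta$ is built into the definition of a morphism of semi-models, hence $\eta$ commutes with $\alpha$.

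There is essentially no obstacle here; the result is a routine consequence of the framework. The only point worth double-checking in the write-up is the direction of the morphisms supplied by the universal property (the canonical semi-model is the \emph{source}, not the target, of morphisms into arbitrary hyperbolic semi-models), but since both $(\Gamma, g, \alpha)$ and $(\Gamma, k, \alpha)$ are canonical, morphisms exist in both directions and Corollary \ref{inverse} closes the argument.
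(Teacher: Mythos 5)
Your proof is correct and is exactly the argument the paper intends: the corollary is stated as an immediate consequence of the universal property of canonical semi-models applied in both directions together with Corollary \ref{inverse}, which is precisely what you do. The directions of the morphisms and the resulting identities $\sigma_1\circ g=k$, $\sigma_2\circ k=g$ are handled correctly.
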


Let now $X$ be a complex manifold and let $f\in \Hol(X,X)$. Suppose $A\subset X$ is an $f$-absorbing hyperbolic domain for $f$ such that $f|_A$ is univalent and $A/{\sf Aut}(A)$ is compact. Let $r\colon A\to Z$ be the map defined in Theorem \ref{aeris}.
Let $\mathcal F$ be the non-singular holomorphic foliation on $\Omega$ induced by $r\colon \Omega\to Z$.

\begin{theorem}\label{canonical}
Let  $X$ be a complex manifold and let $f\in \Hol(X,X)$. Suppose $A\subset X$ is an $f$-absorbing hyperbolic domain for $f$ such that $f|_A$ is univalent and $A/{\sf Aut}(A)$ is compact. Let $(\Omega,h,\psi)$ be a model for $f$. Then $\psi$ preserves the foliation $\mathcal{F}$ of $\Omega$, and thus   induces an automorphism $\tau\colon Z\to Z$ such that
\begin{equation}\label{cid}
\tau\circ r=r\circ \psi.
\end{equation}
The triple $(Z, r\circ h, \tau)$ is a canonical Kobayashi hyperbolic semi-model for $f$.
\end{theorem}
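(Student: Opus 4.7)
The plan is to exploit the Kobayashi-theoretic characterisation of the fibres of $r$ to push $\psi$ down to an automorphism of $Z$, and then to verify the universal property by factoring any morphism out of the model through $r$.

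First I would establish that the fibres of $r$ are precisely the equivalence classes of the pseudo-distance $k_\Omega$. Indeed, by equation \eqref{zwei} of Remark \ref{retrazione}, $k_\Omega=r^{*}k_Z$, and $k_Z$ is a genuine distance because $Z$, being a holomorphic retract of the hyperbolic manifold $A$, is itself hyperbolic. Hence $k_\Omega(x,y)=0$ if and only if $r(x)=r(y)$. Since $\psi\in\Aut(\Omega)$ is an isometry for $k_\Omega$, it maps fibres of $r$ to fibres of $r$, i.e.\ it preserves $\mathcal{F}$. This produces a unique set-theoretic map $\tau\colon Z\to Z$ with $\tau\circ r=r\circ\psi$; because $r$ is a surjective holomorphic submersion with connected fibres (Theorem \ref{aeris}(iii)) and $\tau\circ r$ is holomorphic, $\tau$ is itself holomorphic. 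Applying the same argument to $\psi^{-1}$ gives a holomorphic two-sided inverse, so $\tau\in\Aut(Z)$.

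Next I would verify that $(Z,r\circ h,\tau)$ is a semi-model. The intertwining
\[
\tau\circ(r\circ h)=(\tau\circ r)\circ h=(r\circ \psi)\circ h=r\circ h\circ f
\]
is immediate, and the absorption property follows from applying $r$ to \eqref{due}:
\[
Z=r(\Omega)=\bigcup_{n\geq 0}r(\psi^{-n}(h(X)))=\bigcup_{n\geq 0}\tau^{-n}((r\circ h)(X)).
\]
The base space $Z$ is hyperbolic, so the semi-model is Kobayashi hyperbolic.

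Finally, for the universal property, let $(\Lambda, k,\varphi)$ be any semi-model for $f$ with $\Lambda$ hyperbolic. By Proposition \ref{uniq} there is a morphism $\hat\eta\colon (\Omega,h,\psi)\to(\Lambda,k,\varphi)$ with underlying holomorphic map $\eta\colon\Omega\to\Lambda$. Since $\eta$ is distance-decreasing for the Kobayashi pseudo-distances and $k_\Lambda$ is a distance, the chain of implications $r(x)=r(y)\Rightarrow k_\Omega(x,y)=0\Rightarrow k_\Lambda(\eta(x),\eta(y))=0\Rightarrow \eta(x)=\eta(y)$ shows that $\eta$ is constant on the fibres of $r$. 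As $r$ is a surjective holomorphic submersion with connected fibres, $\eta$ descends to a holomorphic $\sigma\colon Z\to\Lambda$ with $\eta=\sigma\circ r$. Then $\sigma\circ(r\circ h)=\eta\circ h=k$, and $\sigma\circ\tau\circ r=\sigma\circ r\circ\psi=\eta\circ\psi=\varphi\circ\eta=\varphi\circ\sigma\circ r$, which by surjectivity of $r$ yields $\sigma\circ\tau=\varphi\circ\sigma$. Thus $\hat\sigma$ is a morphism of semi-models, confirming that $(Z,r\circ h,\tau)$ is canonical. The main obstacle is the initial step of pushing $\psi$ down: one must ensure that the Forn\ae ss--Sibony retraction $r$ really does collapse precisely the degeneracy locus of $k_\Omega$; once this is in hand the rest of the argument consists of diagram chases together with the standard descent of holomorphic maps across submersions with connected fibres.
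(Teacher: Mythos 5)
Your proposal is correct and follows essentially the same route as the paper: identify the fibres of $r$ with the degeneracy classes of $k_\Omega$ (via $k_\Omega=r^*k_Z$ and hyperbolicity of $Z$), push $\psi$ down to $\tau$, and obtain the universal property by descending the morphism $\eta$ from Proposition \ref{uniq} across $r$. The only cosmetic difference is that the paper realises the descent of $\tau$ and $\sigma$ through local holomorphic sections of $r$ furnished by the implicit function theorem, whereas you invoke the equivalent general fact about submersions with connected fibres.
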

\begin{proof}
 Since $Z$ is hyperbolic and the Kobayashi distance along the leaves of $\mathcal F$ is identically zero by Theorem \ref{aeris}, it follows easily that $k_\Omega(x,y)=0$ if and only if $x$ and $y$ belong to the same leaf of $\mathcal F$.
 The automorphism   $\psi$ preserves  $k_\Omega$, and thus  it preserves the foliation $\mathcal{F}$: two points $x$ and $y$ of $\Omega$ belong to the same leaf of $\mathcal F$ if and only if the images $\psi(x)$ and $\psi(y)$ belong to the same leaf of $\mathcal F$.

 Let $z\in Z$. If the point  $x\in \Omega$ belongs to the leaf $r^{-1}(z)$, we define $\tau\colon Z\to Z$  by setting
\[
\tau(z):=r(\psi(x)).
\]
 By the preceding discussion it follows that $\tau$ is well defined and is bijective.

We claim that $\tau$ is a holomorphic automorphism of $Z$. Let thus $z\in Z$ and $x\in r^{-1}(z)$. By the implicit function theorem there exists a neighborhood $U$ of $z$ and a holomorphic map $\gamma\colon U\to \Omega$ such that $r\circ\gamma={\sf id}_U.$ But then $$\tau=r\circ \psi\circ\gamma$$ is holomorphic in $U$. It is then easy to see that  $(Z, r\circ h, \tau)$ is a semi-model with hyperbolic base space.

Now, let  $(\Lambda, k,\varphi)$ be a semi-model for $f$, with $\Lambda$ hyperbolic. By Theorem \ref{tifa} there exists a morphism of models $\hat\eta\colon(\Omega,h,\psi)\to (\Lambda, k,\varphi)$, that is  $\eta\in \Hol(\Omega,\Lambda)$ such that $k=\eta\circ h$ and $\eta\circ\psi=\varphi\circ \eta.$ Since $\Lambda$ is hyperbolic, the mapping $\eta$ is constant on the leaves of $\mathcal F$. Therefore, given $z\in Z$ and $x\in r^{-1}(z)$, setting
\[
\sigma(z):=\eta(x),
\]
gives a well defined mapping $\sigma\colon Z\to \Lambda$ which satisfies $\eta=\sigma\circ r$ by construction. To see that $\sigma$ is holomorphic, just notice that $\sigma=\eta \circ \gamma$ on $U$, where $U$ and $\gamma$ are given by the implicit function theorem as before.

Finally, notice that
 $$\varphi\circ\sigma\circ r=\sigma\circ r\circ\psi=\sigma\circ\tau\circ r,$$
which implies $\varphi\circ\sigma=\sigma\circ\tau.$ Hence $\sigma$ induces a morphism of models from $(Z, r\circ h, \tau)$ to $(\Lambda, k,\varphi)$, which shows that indeed $(Z, r\circ h, \tau)$ is a canonical Kobayashi hyperbolic semi-model.
\end{proof}

\begin{corollary}
Let  $X$ be a complex manifold and let $f\in \Hol(X,X)$. Suppose $A\subset X$ is an $f$-absorbing hyperbolic domain for $f$ such that $f|_A$ is univalent and $A/{\sf Aut}(A)$ is compact. Let $(Z, \ell, \tau)$ be a canonical Kobayashi hyperbolic semi-model for $f$ and assume that $1\leq \dim Z<\dim X$. Then $f$ preserves the foliation induced by $\ell$.
\end{corollary}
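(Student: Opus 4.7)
The plan is to derive the claim immediately from the defining intertwining identity $\ell\circ f=\tau\circ\ell$ of the semi-model. If $z\in Z$ and $x\in\ell^{-1}(z)$, then $\ell(f(x))=\tau(\ell(x))=\tau(z)$, so $f$ sends the fiber $\ell^{-1}(z)$ into the fiber $\ell^{-1}(\tau(z))$. In other words, $f$ takes leaves of the level-set partition of $\ell$ to leaves, which is precisely the statement that $f$ preserves the foliation induced by $\ell$.

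To make this precise, the one point that requires verification is that the level sets of $\ell$ genuinely define a holomorphic foliation. Up to isomorphism of semi-models (Proposition \ref{unico-semimodel}), one can take $\ell=r\circ h$ as constructed in Theorem \ref{canonical}, with $h\colon X\to\Omega$ the intertwining map of the model $(\Omega,h,\psi)$ and $r\colon\Omega\to Z$ the Forn\ae ss--Sibony submersion provided by Theorem \ref{aeris}. On the $f$-absorbing domain $A$ the map $h|_A$ is a biholomorphism onto the open set $h(A)\subset\Omega$, and $r|_{h(A)}$ is a holomorphic submersion of constant rank $k=\dim Z$. Consequently $\ell|_A$ is a holomorphic submersion of rank $k$, and its fibers define a nonsingular holomorphic foliation of $A$ by complex submanifolds of codimension $\dim X-k$; the hypothesis $1\le k<\dim X$ ensures this foliation is nontrivial, neither by isolated points nor by a single leaf.

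There is no real obstacle: the content of the corollary is already encoded in the commutative square $\ell\circ f=\tau\circ\ell$ of the semi-model structure. The role of the dimension assumption $1\le\dim Z<\dim X$ is purely to make the word \emph{foliation} meaningful; the invariance under $f$ is an entirely formal consequence of the semi-conjugation relation, combined with the fact that $\tau$ is an automorphism of $Z$ (so the assignment $\ell^{-1}(z)\mapsto\ell^{-1}(\tau(z))$ is a bijection on leaves).
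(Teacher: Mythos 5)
Your proposal is correct and takes essentially the same route as the paper: the paper's proof is the one-line observation that $\psi$ preserves the foliation $\mathcal F$ of $\Omega$ (established in Theorem \ref{canonical}), which, transported through $h\circ f=\psi\circ h$, is exactly your computation $\ell(f(x))=\tau(\ell(x))$ showing fibers of $\ell$ map into fibers. Your added remark that $\ell|_A=r\circ h|_A$ has constant rank $k$, so that the level sets genuinely form a nonsingular holomorphic foliation on the absorbing domain, is a reasonable (and harmless) precision that the paper leaves implicit.
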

\begin{proof}
It follows from the fact that $\psi$ preserves the foliation $\mathcal F$ of $\Omega$.
\end{proof}

The hyperbolic step defined in Section \ref{step} turns out to be useful for understanding the nature of the canonical Kobayashi semi-model.

\begin{lemma}\label{usare}
Let $X$ be a complex manifold and $f\in \Hol(X,X)$. Assume there exists an $f$-absorbing domain $A\subset X$ such that $f|_A$ is univalent and $A/{\sf Aut}(A)$ is compact. Let $(Z, \ell, \tau)$ be a  canonical Kobayashi hyperbolic semi-model for $f$. Then $c(f)=c(\tau)$.
\end{lemma}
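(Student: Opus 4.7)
The plan is to bridge $c(f)$ and $c(\tau)$ through the model $(\Omega,h,\psi)$, using Lemma \ref{climbhazard} for the first half and the Fornæss--Sibony submersion $r\colon\Omega\to Z$ for the second. The whole proof is essentially a chase through the commutative diagram
$$\xymatrix{X\ar[r]^{f}\ar[d]_{h}& X\ar[d]^{h}\\ \Omega\ar[r]^{\psi}\ar[d]_{r}& \Omega\ar[d]^{r}\\ Z\ar[r]^{\tau}& Z,}$$
coupled with the Kobayashi isometry properties of the vertical arrows.

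First, since $A$ is $f$-absorbing and $f|_A$ is univalent, Lemma \ref{climbhazard} applies and gives $c(f)=c(\psi)$. So it suffices to prove $c(\psi)=c(\tau)$.

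For this, I would use Remark \ref{retrazione}, which states that the retraction $r\colon\Omega\to Z$ satisfies $r^{\ast}k_Z=k_\Omega$. Fix any $x\in\Omega$ and set $z:=r(x)\in Z$. Using $r\circ\psi=\tau\circ r$, iteration gives $r(\psi^m(x))=\tau^m(z)$ for every $m\in\N$, hence
\begin{equation*}
k_\Omega(x,\psi^m(x))\;=\;k_Z\bigl(r(x),r(\psi^m(x))\bigr)\;=\;k_Z(z,\tau^m(z)).
\end{equation*}
Dividing by $m$ and letting $m\to\infty$, Definition \ref{divrate} yields $c(\psi)=c(\tau)$, as desired.

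Combining the two equalities we obtain $c(f)=c(\psi)=c(\tau)$. I do not anticipate any real obstacle here: the only non-trivial ingredient is the equality $r^{\ast}k_Z=k_\Omega$, which was already isolated in Remark \ref{retrazione} as a direct consequence of part (i) of Theorem \ref{aeris} together with the fact that $Z$ is a holomorphic retract of the base manifold. Note that the inequality $c(f)\ge c(\tau)$ also follows cheaply from Lemma \ref{c} applied to the semiconjugation $\ell\circ f=\tau\circ\ell$; the content of the lemma is the reverse inequality, which is exactly what the Kobayashi-isometry property of $r$ provides.
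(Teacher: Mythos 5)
Your proposal is correct and follows essentially the same route as the paper: the paper likewise combines Lemma \ref{climbhazard} with the isometry property $r^{\ast}k_Z=k_\Omega$ of Remark \ref{retrazione}, merely phrasing the conclusion via the hyperbolic steps $s_m(x)=k_Z(\ell(x),\tau^m(\ell(x)))$ and Proposition \ref{base} instead of passing explicitly through $c(\psi)$.
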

\begin{proof}
By Lemma \ref{climbhazard} and Remark \ref{retrazione} for all $x\in A$ we have
\begin{equation}\label{sancta}
s_m(x)=k_Z(\ell(x),\tau^m(\ell(x)).
\end{equation}
Hence, the result follows from Proposition \ref{base}.
\end{proof}

\begin{proposition}\label{puntifissi}
Let $X$ be a complex manifold and $f\in \Hol(X,X)$. Assume there exists an $f$-absorbing domain $A\subset X$ such that $f|_A$ is univalent and $A/{\sf Aut}(A)$ is compact. Let $(Z, \ell, \tau)$ be a  canonical Kobayashi semi-model for $f$. Then   $z\in Z$ is a fixed point of $\tau$ if and only if there exists $x\in X$ such that $z=\ell(x)$ and $s(x)=0$.
\end{proposition}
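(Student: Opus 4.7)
The plan is to first upgrade the identity from Lemma \ref{usare}, namely $s_m(x) = k_Z(\ell(x), \tau^m(\ell(x)))$ for $x \in A$, to an identity valid on all of $X$ in the case $m=1$. Given $x \in X$, choose $m_0 \in \N$ with $y := f^{m_0}(x) \in A$ (possible since $A$ is $f$-absorbing), and apply (4.2) to $y$. Using the semi-model intertwining $\ell \circ f = \tau \circ \ell$, one has $\ell(y) = \tau^{m_0}(\ell(x))$; since $\tau$ is a $k_Z$-isometry of $Z$,
\[
k_Z(\ell(y), \tau(\ell(y))) = k_Z(\tau^{m_0}(\ell(x)), \tau^{m_0+1}(\ell(x))) = k_Z(\ell(x), \tau(\ell(x))).
\]
On the other hand the defining limit $s(x) = \lim_{n\to\infty} k_X(f^n(x), f^{n+1}(x))$ depends only on the tail of the orbit, so $s(x) = s(y)$. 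Combining these gives the key identity
\[
s(x) = k_Z(\ell(x), \tau(\ell(x))) \quad \text{for every } x \in X.
\]

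With this identity, both implications are short. For the \emph{if} direction, suppose $z = \ell(x)$ and $s(x) = 0$; then $k_Z(z, \tau(z)) = 0$, and since $Z$ is Kobayashi hyperbolic the pseudo-distance $k_Z$ is a genuine distance, forcing $\tau(z) = z$. For the \emph{only if} direction, suppose $\tau(z) = z$. By property \eqref{due} of the semi-model, there exists $m \in \N$ with $\tau^m(z) \in \ell(X)$; but $\tau^m(z) = z$, so already $z \in \ell(X)$, and any preimage $x \in \ell^{-1}(z)$ satisfies $s(x) = k_Z(z, \tau(z)) = 0$.

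The only genuine step is the extension of (4.2) to all of $X$, which is handled transparently by the absorbing property and the fact that $\tau \in \Aut(Z)$ is a Kobayashi isometry; after that, the equivalence becomes a tautology of the hyperbolicity of $Z$ and the density condition \eqref{due}.
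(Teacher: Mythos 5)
Your proof is correct and rests on the same key identity as the paper's, namely equation \eqref{sancta} from Lemma \ref{usare}, $s_m(x)=k_Z(\ell(x),\tau^m(\ell(x)))$. The differences are minor but worth noting. First, you make explicit the extension of \eqref{sancta} to all $x\in X$ (via the absorbing property of $A$, the intertwining relation, and the fact that $\tau$ is a $k_Z$-isometry); the paper applies \eqref{sancta} to arbitrary $x\in X$ without comment, so your step closes a small gap rather than adding redundancy. Second, for the ``only if'' direction the paper passes through the model $(\Omega,h,\psi)$ and the foliation $\mathcal F$: a fixed point $z$ of $\tau$ gives a $\psi$-invariant leaf $r^{-1}(z)$, which must meet $h(X)$ because $h(X)$ is $\psi$-absorbing. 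You instead stay entirely at the level of the semi-model and use condition \eqref{due} together with $\tau^m(z)=z$ to conclude $z\in\ell(X)$ directly. Both mechanisms are correct; yours is slightly more self-contained and avoids invoking the explicit construction of Theorem \ref{canonical} beyond what is needed to justify \eqref{sancta} (which, via the uniqueness of the canonical semi-model up to isomorphism, is legitimate). The remaining steps --- hyperbolicity of $Z$ forcing $k_Z(z,\tau(z))=0\Rightarrow\tau(z)=z$ --- match the paper's logic exactly.
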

\begin{proof} Let $(\Omega, h, \psi)$ be a model for $f$. We can assume that the canonical Kobayashi hyperbolic semi-model $(Z, \ell, \tau)$ is given by  Theorem \ref{canonical}, with $\ell:=    r\circ h$.
Assume $z$ is a fixed point of $\tau$, then the leaf $r^{-1}(z)$ of $\mathcal{F}$ is $\psi$-invariant, and thus  $r^{-1}(z)\cap h(X)\neq\varnothing$ because $h(X)$ is $\psi$-absorbing.   If $x\in X$ is such that $h(x)\in r^{-1}(z)$, then (\ref{sancta}) shows that $s(x)=0$.
Conversely, assume that $x\in X$ satisfies $s(x)=0$,  then  (\ref{sancta}) shows that $r^{-1}(z)$ is $\psi$-invariant, that is $\tau(z)=z$.
\end{proof}

We end up this section by constructing a canonical Kobayashi hyperbolic semi-model for any holomorphic self-map of a taut manifold whose sequence of iterates is not compactly divergent.

Assume $X$ is a taut complex manifold and $f\in \Hol(X,X)$. If $\{f^m\}$  is not compactly divergent (for instance if $f$ has a periodic or a fixed point in $X$) then by \cite[Theorem 2.1.29]{A} there exists a complex submanifold $M\subset X$ (called the {\sl limit manifold} of $f$) and a holomorphic retraction $\rho: X \to M$ (which we call the {\sl canonical retraction} associated with $f$) such that $\rho$ is a limit point of $\{f^m\}$ and every other limit point of $\{f^m\}$ is of the form $\gamma\circ \rho$ for some automorphism $\gamma$ of $M$. Moreover, $f(M)=M$, $f|_M$ is an automorphism of $M$ and $\rho \circ f=f\circ \rho$. Note that  any $f$-absorbing domain $A\subset X$---if it exists---has to contain $M$. Then we can construct a canonical Kobayashi hyperbolic semi-model in this case.
\begin{proposition}\label{notdivergent}
Assume $X$ is a taut complex manifold and $f\in \Hol(X,X)$. Suppose $\{f^m\}$  is not compactly divergent and let $M$ be the limit manifold of $f$ and $\rho$ the canonical retraction associated with $f$. Then $(M,\rho, f|_M)$ is a canonical Kobayashi hyperbolic semi-model for $f$. In particular, $f$ has a model with hyperbolic base space if and only if $f$ is an automorphism of $X$.
\end{proposition}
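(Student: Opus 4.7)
The plan is to first verify directly that $(M,\rho,f|_M)$ is a semi-model whose base space is Kobayashi hyperbolic, then establish the universal property by using the description of $\rho$ as a limit of iterates of $f$ together with the fact that the automorphism in any competing semi-model is a Kobayashi isometry, and finally read off the ``in particular'' statement from uniqueness together with a dimension count.

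The semi-model axioms are immediate: the identity $\rho\circ f=f\circ\rho=(f|_M)\circ\rho$ (valid because $\rho$ lands in $M$) gives the intertwining, $\rho(X)=M$ makes the saturation condition $\bigcup_{n\in\N}(f|_M)^{-n}(\rho(X))=M$ trivial, and $f|_M\in\Aut(M)$ by hypothesis. The base space $M$ is hyperbolic because it is a holomorphic retract of the taut (hence hyperbolic) manifold $X$, so $k_M=k_X|_M$.

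For the universal property, let $(\Lambda,k,\varphi)$ be any semi-model for $f$ with $\Lambda$ hyperbolic. The crux is to prove that $k$ is constant on the fibers of $\rho$, i.e.\ $k=k\circ\rho$; once that is done, $\sigma:=k|_M\colon M\to\Lambda$ is manifestly a morphism $(M,\rho,f|_M)\to(\Lambda,k,\varphi)$, since $\sigma\circ\rho=k$ is the factorization itself and $\sigma\circ f|_M=\varphi\circ\sigma$ is just the intertwining $k\circ f=\varphi\circ k$ restricted to $M$. To establish $k=k\circ\rho$, I would pick a subsequence $f^{m_j}\to\rho$ in $\Hol(X,X)$. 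For any $x\in X$, both $f^{m_j}(x)\to\rho(x)$ and $f^{m_j}(\rho(x))\to\rho(\rho(x))=\rho(x)$, so continuity of $k$ together with $k\circ f^{m_j}=\varphi^{m_j}\circ k$ gives $\varphi^{m_j}(k(x))\to k(\rho(x))$ and $\varphi^{m_j}(k(\rho(x)))\to k(\rho(x))$. Since $\varphi$ is a $k_\Lambda$-isometry,
$$k_\Lambda(k(x),k(\rho(x)))=k_\Lambda\bigl(\varphi^{m_j}(k(x)),\varphi^{m_j}(k(\rho(x)))\bigr)\longrightarrow 0\quad\text{as }j\to\infty,$$
and hyperbolicity of $\Lambda$ forces $k(x)=k(\rho(x))$. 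This isometry-plus-limit trick is the only nontrivial point of the argument.

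For the ``in particular'' assertion: if $f\in\Aut(X)$, then $(X,\mathrm{id}_X,f)$ is a model for $f$ with hyperbolic base. Conversely, if $f$ admits a model $(\Omega,h,\psi)$ with $\Omega$ hyperbolic, then Proposition \ref{uniq} yields a morphism $(\Omega,h,\psi)\to(M,\rho,f|_M)$, while the universal property just proved yields one in the opposite direction; by Corollary \ref{inverse} the two semi-models are isomorphic, so $\dim M=\dim\Omega=\dim X$ (the latter equality being noted in the remark following the definition of model). Since $M$ is a connected complex submanifold of $X$ of full dimension, it must equal $X$, whence $f=f|_M\in\Aut(X)$.
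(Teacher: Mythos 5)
Your proof is correct and follows essentially the same route as the paper: the key step in both is to take a subsequence $f^{m_j}\to\rho$ and use that $\varphi$ is a $k_\Lambda$-isometry together with hyperbolicity of $\Lambda$ to conclude that $k$ is constant on the fibers of $\rho$ (the paper applies this to a pair $x,y$ with $\rho(x)=\rho(y)$, you to the pair $x,\rho(x)$ — a cosmetic difference). Your dimension-count argument for the ``in particular'' clause, which the paper leaves implicit, is also sound.
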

\begin{proof}
Clearly, $(M,\rho, f|_M)$ is a Kobayashi hyperbolic semi-model for $f$. In order to prove that it is a canonical one, we prove that for any Kobayashi hyperbolic semi-model $(\Lambda,k,\v)$ there exists a morphism of models $\hat\eta\colon (M,\rho, f|_M)\to (\Lambda,k,\v)$.
Since $\rho$ is a limit point of $\{f^m\}$ there exists a subsequence $\{f^{m_n}\}$ converging to $\rho$ uniformly on compacta of $X$. Let $x,y\in X$ be such that $\rho(x)=\rho(y).$ For all $n\geq 0$ we have $$k\circ f^{m_n}=\v^{m_n}\circ k,$$ and thus
\begin{equation*}
\begin{split}
0=k_\Lambda(k(\rho(x)), k(\rho(y))&=\lim_{n\to \infty}k_\Lambda(k(f^{m_n}(x)),k(f^{m_n}(y)))\\&=\lim_{n\to \infty} k_\Lambda(\v^{m_n}(k(x)),\v^{m_n}(k(y)))=k_\Lambda(k(x),k(y)).
\end{split}
\end{equation*}
Since $\Lambda$ is hyperbolic this implies $k(x)=k(y)$. Therefore $k$ is constant along the fibers of $\rho$, hence $k\circ \rho=k$. Thus, it is easy to see that the map $\eta \colon M\to \Lambda$ defined as $\eta=k|_M$ gives the desired morphism of models.
\end{proof}

\section{Univalent self-maps of the  ball}\label{sec-ball}
\subsection{Iteration in the unit ball} In this section we focus on the model problem for {\sl univalent} self-maps of the unit ball. We briefly recall here some facts about holomorphic self-maps of the unit ball, referring the reader to \cite{A}, \cite{Ru} for details and proofs.

We use the Poincar\'e metric of the disc $\D$ with constant curvature $-1$.  Let $\B^q:=\{z\in \C^q : \|z\|<1\}$, and recall that for all $z\in \B^q$ we have
\[
k_{\B^q}(0,z)=\log \frac{1+\|z\|}{1-\|z\|},
\]
while $k_{\B^q}(z,w)=k_{\B^q}(0,T_w(z))$ where $T_w\in {\sf Aut}(\B^q)$ is any automorphism of $\B^q$ which maps $w$ to $0$.

The Siegel upper half-space $\mathbb H^q$ is defined by $$\mathbb{H}^q=\left\{(z,w)\in \C\times \C^{q-1}, \Im(z)>\|w\|^2\right\}.$$ Recall that $\mathbb H^q$ is biholomorphic to the ball $\B^q$ via the {\sl Cayley transform} $\Psi\colon \B^q\to \H^q$ defined as $$\Psi(z)=i\frac{e_1+z}{1-z_1}.$$

In  several variables the natural notion of limit at the boundary of the ball is defined using the Kor\'anyi approach regions.
 If $a\in \partial\B^q$, then the set $$K(a,R):=\{z\in \B^q: |1-(z,a)|< R(1-\|z\|)\}$$ is a {\sl Kor\'anyi region of vertex $a$ and amplitude $R> 1$} (see
\cite[Section 2.2.3]{A}). In \cite[Section 5.4.1]{Ru}   a slightly different but essentially equivalent definition is given and used.
Let $f: \B^n \to \C^m$ be a holomorphic map. We say that $f$ has {\sl $K$-limit} $L$ at
$a$---and we write $K\hbox{-}\lim_{z\to a}f(z)=L$---if for
each sequence $\{z_k\}\subset \B^q$ converging to $a$ such that
$\{z_k\}$  belongs eventually to some Kor\'anyi region of vertex $a$, we have
that $f(z_k)\to L$.

In what follows we shall also need a slightly weaker notion of limit.
\begin{definition}\label{restricted-special}
A sequence $\{z_k\}\subset \B^n$ converging to $a\in \partial\B^q$ is said to be {\sl restricted} at $a$ if and only if  $\la z_k, a\ra\to 1$ non-tangentially in $\D$, while  it is said to be {\sl special} at $a$ if and only if
\[
\lim_{k\to \infty}k_{\B^n}(z_k,\langle z_k,a\rangle a)=0.
\]

This latter condition is equivalent to $\lim_{k\to \infty}\frac{\|z_k-\la z_k,a\ra a\|^2}{1-|\la z_k,a\ra|^2}= 0$.
We say that $\{z_k\}$ is  {\sl admissible} at $a$ if it is both special and restricted at $a$.
We say that $f$ has {\sl admissible limit} (or  {\sl restricted $K$-limit})
$L$ at $a$---and we write $\angle_K\lim_{z\to a}f(z)=L$---if
for every admissible sequence $\{z_k\}\subset \B^n$ converging to $a$ we have
that $f(z_k)\to L$.
\end{definition}

One can show that
\[
K\hbox{-}\lim_{z\to a}f(z)=L\Longrightarrow \angle_K\lim_{z\to
a}f(z)=L,
\]
but the converse implication  is not true in general.

For the convenience of the reader we recall the following Lemma proved in \cite[Lemma 2.4]{BGP}.
\begin{lemma}\label{BGP}
Let $\{z_k\}$ be a sequence in $\B^q$ converging to $a\in \partial\B^q$. Then the following are equivalent:
\begin{enumerate}
\item $\{z_k\}$ is eventually contained in some Kor\'anyi region with vertex $a$,
\item $\{z_k\}$ is restricted at $a$ and the sequence $\{k_{\B^n}(z_k,\langle z_k,a\rangle a)\}$ is bounded.
\end{enumerate}
\end{lemma}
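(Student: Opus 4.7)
The plan is to reduce both conditions to elementary inequalities among the three real scalars $1-\|z_k\|$, $|1-\la z_k,a\ra|$, and $1-|\la z_k,a\ra|$. First I would recall the standard formula for the pseudohyperbolic distance on the ball: for $z,w\in\B^q$,
\[
1-\|T_w(z)\|^2 \;=\; \frac{(1-\|z\|^2)(1-\|w\|^2)}{|1-\la z,w\ra|^2},
\]
and note that boundedness of $k_{\B^q}(z_k,\la z_k,a\ra a)$ is equivalent to $\|T_{\la z_k,a\ra a}(z_k)\|$ staying bounded away from $1$. Specializing the formula to $w=\la z,a\ra a$, where $\|w\|^2 = \la z,w\ra = |\la z,a\ra|^2$, the right hand side collapses to $(1-\|z\|^2)/(1-|\la z,a\ra|^2)$. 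Since $1+\|z_k\|\to 2$ and $1+|\la z_k,a\ra|\to 2$, condition (2) is therefore equivalent to $\la z_k,a\ra\to 1$ non-tangentially in $\D$ together with $(1-\|z_k\|)/(1-|\la z_k,a\ra|)\ge\varepsilon$ eventually for some $\varepsilon>0$.

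For the implication (1)$\Rightarrow$(2), I would start from $|1-\la z_k,a\ra|<R(1-\|z_k\|)$ and apply the trivial inequality $1-\|z_k\|\le 1-|\la z_k,a\ra|$ (from $|\la z_k,a\ra|\le\|z_k\|$) on the right, obtaining $|1-\la z_k,a\ra|\le R(1-|\la z_k,a\ra|)$, which is the non-tangential condition. Conversely, feeding $|1-\la z_k,a\ra|\ge 1-|\la z_k,a\ra|$ into the same inequality on the left gives $(1-\|z_k\|)/(1-|\la z_k,a\ra|)\ge 1/R$, so by the translation above the Kobayashi distances are bounded.

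For (2)$\Rightarrow$(1), I would pick $C$ and $\varepsilon$ so that $|1-\la z_k,a\ra|\le C(1-|\la z_k,a\ra|)$ and $(1-\|z_k\|)/(1-|\la z_k,a\ra|)\ge\varepsilon$ hold eventually, and then chain them:
\[
|1-\la z_k,a\ra|\;\le\; C(1-|\la z_k,a\ra|)\;\le\;\frac{C}{\varepsilon}(1-\|z_k\|),
\]
placing $\{z_k\}$ eventually inside the Kor\'anyi region $K(a,C/\varepsilon)$.

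The one step worth pausing over is the explicit evaluation of $\|T_{\la z,a\ra a}(z)\|$, but this is immediate from the formula above combined with the observations $\|\la z,a\ra a\|^2=|\la z,a\ra|^2$ and $\la z,\la z,a\ra a\ra = |\la z,a\ra|^2$. Beyond that, the proof is routine bookkeeping with elementary inequalities, so I anticipate no serious obstacle.
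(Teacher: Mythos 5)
Your proof is correct: the reduction of condition (2) to the two scalar inequalities via the identity $1-\|T_w(z)\|^2=(1-\|z\|^2)(1-\|w\|^2)/|1-\la z,w\ra|^2$ specialized at $w=\la z,a\ra a$ (where both $\|w\|^2$ and $\la z,w\ra$ equal $|\la z,a\ra|^2$) is exactly the standard computation, and the two chains of elementary inequalities in either direction are sound, up to the trivial adjustment of replacing $C/\varepsilon$ by any strictly larger amplitude to respect the strict inequality in the definition of $K(a,R)$. The paper itself offers no proof, citing \cite[Lemma 2.4]{BGP}, and your argument is essentially the one found there.
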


Recall that the group  of automorphisms ${\sf Aut}(\B^q)$  is made of linear fractional maps (see {\sl e.g.} \cite{A,Ru}). In particular every automorphism of $\B^q$ extends analytically to $\de \B^q$. An automorphism $\tau\in {\sf Aut}(\B^q)$ is  {\sl elliptic} if it has fixed points in $\B^q$. If $\tau$ is not elliptic, then it is  hyperbolic if it has exactly two different fixed points in $\de\B^q$, and it is parabolic if it has exactly one fixed point in $\de\B^q$.

Given a holomorphic self-map $f: \B^q\to \B^q$ with no fixed points in $\B^q$ there exists a unique  point $a\in \de \B^q$, called the {\sl Denjoy-Wolff point} of $f$, such that $\{f^n\}$ converges uniformly on compacta to the constant map $z\mapsto a$. Moreover, $K\hbox{-}\lim_{z\to a}f(z)=a$. The  {\sl  dilation} (also called the {\sl boundary dilatation coefficient}) of $f$ at its Denjoy-Wolff point $a\in \de \B^q$ is defined as
\[
\lambda_f:=\liminf_{z\to a}\frac{1-\|f(z)\|}{1-\|z\|}\in (0,1].
\]

The partition of the family of automorphisms in three different categories can be extended to a partition of the family of self-maps of the ball.
\begin{definition}
A map $f\in \Hol(\B^q, \B^q)$ is  {\sl elliptic} if it has fixed points in $\B^q$. If $f$ is not elliptic, then it is {\sl hyperbolic} if its dilation  $\lambda_f\in (0,1)$, and it is {\sl parabolic} if its dilation  $\lambda_f=1$.
\end{definition}

Let $f\in \Hol(\B^q,\B^q)$ be non-elliptic, with Denjoy-Wolff point $a$ and dilation $\lambda_f\in (0,1]$. Recall that the {\sl horosphere}  $E(a,R)$ of center $a$ and radius $R>0$ is defined as $$E(a,R):=   \left\{  z\in \B^q : \frac{|1- \langle z, a\rangle|^2}{1-\|z\|^2}< R \right\}.$$
It follows from the several variable version of Julia's Lemma (\cite[Thm. 8.5.3]{Ru} and \cite[Thm. 2.2.21]{A})  that for all $R>0$,
\begin{equation}\label{Julia}
f(E(a,R))\subset E(a, \lambda_f R).
\end{equation}

Let $e_1=(1,0,\ldots, 0)$. Since the group of automorphisms of $\B^q$ acts bi-transitively on $\partial \B^q$, in our considerations regarding boundary points $a\in \B^q$ we will often assume without loss of generality that $a=e_1$.

We shall need the following generalization to higher dimension of \cite[Prop.3.3]{CD}.

\begin{proposition}\label{limit at DW}
Let $f\in \Hol(\B^q,\B^q)$  with Denjoy-Wolff point $e_1$ and dilation $\lambda_f\in (0,1]$. Then for any admissible sequence $\{z_k\}$    which converges to $e_1$ such that $\lim_{k\to \infty}\frac{1-\langle z_k,e_1\rangle}{|1-\langle z_k,e_1\rangle|}=c$, it follows
\begin{equation}\label{special-formula}
\lim_{k\to \infty} k_{\B^q}(z_k, f(z_k))=\log \frac{|\overline{c}^2+\lambda_f|+(1-\lambda_f)}{|\overline{c}^2+\lambda_f|-(1-\lambda_f)}.
\end{equation}
In particular, if $f$ is parabolic, {\sl i.e.} $\lambda_f=1$, then for any admissible sequence $\{z_k\}$    which converges to $e_1$ we have $\lim_{k\to \infty} k_{\B^q}(z_k, f(z_k))=0$.
\end{proposition}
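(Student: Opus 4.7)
The strategy is to reduce the several-variable estimate to the classical one-dimensional Julia-Wolff-Caratheodory computation on the complex axis $\C e_1$, where the Kobayashi distance has a completely explicit form. Without loss of generality I take $a=e_1$, and I write $\pi(z):=\langle z,e_1\rangle$ for the projection onto this axis.

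First I would use Julia's Lemma \eqref{Julia} to propagate the boundary control from $\{z_k\}$ to $\{f(z_k)\}$. Since $\{z_k\}$ is admissible, Lemma \ref{BGP} places it eventually inside some Kor\'anyi region at $e_1$, so that $R_k:=|1-\pi(z_k)|^2/(1-\|z_k\|^2)$ is bounded and the inclusion $f(E(e_1,R_k))\subset E(e_1,\lambda_f R_k)$ gives $f(z_k)\in E(e_1,\lambda_f R_k)$. The Denjoy-Wolff theorem for $\B^q$ ensures $f$ has $K$-limit $e_1$ at $e_1$, hence $f(z_k)\to e_1$; combining with the horospherical bound, $\pi(f(z_k))\to 1$ inside a Stolz region at $1$ in $\D$, so $\{f(z_k)\}$ is restricted at $e_1$.

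Next I would invoke Rudin's several variable Julia-Wolff-Caratheodory theorem (see \cite{Ru}): for any admissible sequence approaching the Denjoy-Wolff point one has both the Julia quotient limit
\[
\lim_{k\to\infty}\frac{1-\pi(f(z_k))}{1-\pi(z_k)}=\lambda_f
\]
and the tangential decay $\|f(z_k)-\pi(f(z_k))e_1\|^2=o(|1-\pi(z_k)|)$. Because $\pi(f(z_k))$ is non-tangential at $1$, $1-|\pi(f(z_k))|^2$ is of order $|1-\pi(z_k)|$, and the tangential decay then yields $\|f(z_k)-\pi(f(z_k))e_1\|^2/(1-|\pi(f(z_k))|^2)\to 0$. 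Thus $\{f(z_k)\}$ is also admissible at $e_1$.

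Finally, I would plug into the explicit formula
\[
k_{\B^q}(z,w)=\frac{1}{2}\log\frac{1+\|T_z(w)\|}{1-\|T_z(w)\|},\qquad \|T_z(w)\|^2=1-\frac{(1-\|z\|^2)(1-\|w\|^2)}{|1-\langle z,w\rangle|^2},
\]
with $z=z_k,\ w=f(z_k)$, measuring everything in units of $|1-\pi(z_k)|$. Specialness of $\{z_k\}$ and $\{f(z_k)\}$ combined with Cauchy-Schwarz makes the cross term $\langle z_k,f(z_k)\rangle-\pi(z_k)\overline{\pi(f(z_k))}$ an $o(|1-\pi(z_k)|)$; together with the phase assumption $(1-\pi(z_k))/|1-\pi(z_k)|\to c$ and the Julia quotient limit one computes
\[
\frac{|1-\langle z_k,f(z_k)\rangle|}{|1-\pi(z_k)|}\longrightarrow |c+\bar c\lambda_f|=|\bar c^{2}+\lambda_f|.
\]
Specialness likewise gives $(1-\|z_k\|^2)\sim(1-|\pi(z_k)|^2)$ and similarly for $f(z_k)$, so the whole numerator $(1-\|z_k\|^2)(1-\|f(z_k)\|^2)$ reduces asymptotically to its one-dimensional analogue for $\pi(z_k),\pi(f(z_k))\in\D$. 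The classical disc computation, using $k_\D(\zeta,\eta)=\log\frac{|1-\zeta\bar\eta|+|\zeta-\eta|}{|1-\zeta\bar\eta|-|\zeta-\eta|}$, then produces exactly formula \eqref{special-formula}; the parabolic case $\lambda_f=1$ falls out as the degenerate value $\log 1=0$. The hard part is the tangential decay in the Rudin step: without it $\{f(z_k)\}$ need not be special, and the reduction to the $\C e_1$ axis collapses.
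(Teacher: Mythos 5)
Your overall strategy coincides with the paper's: both proofs hinge on showing, via Rudin's Julia--Wolff--Carath\'eodory theorem, that $\{f(z_k)\}$ is again special, and then reducing the computation to the slice $\D e_1$. Where you differ is in the reduction itself. The paper sandwiches $k_{\B^q}(z_k,f(z_k))$ between $k_{\B^q}(\pi(z_k),\pi(f(z_k)))$ from below (the map $z\mapsto\langle z,e_1\rangle e_1$ is a holomorphic self-map of $\B^q$, so the Kobayashi distance contracts) and $k_{\B^q}(\pi(z_k),\pi(f(z_k)))+k_{\B^q}(z_k,\pi(z_k))+k_{\B^q}(f(z_k),\pi(f(z_k)))$ from above, the two extra terms vanishing precisely by specialness of the two sequences; it then computes the one-dimensional M\"obius quantity $T_k$. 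You instead expand the explicit formula for $\|T_{z_k}(f(z_k))\|$ and show the cross term $\langle z_k',f(z_k)'\rangle$ is $o(|1-\langle z_k,e_1\rangle|)$. Both routes lead to the same limit for $|T_k|$ (and hence to \eqref{special-formula}); the sandwich argument just saves the Cauchy--Schwarz bookkeeping. One cosmetic slip: with the paper's normalization $k_{\B^q}(0,z)=\log\frac{1+\|z\|}{1-\|z\|}$, the factor $\frac12$ in your displayed formula for $k_{\B^q}(z,w)$ should not be there; your final disc formula is already consistent with the paper's convention.

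One step needs repair. You deduce that $\pi(f(z_k))\to 1$ inside a Stolz region from the horospherical containment $f(z_k)\in E(e_1,\lambda_f R_k)$ given by Julia's lemma. This does not follow: a bounded Julia quotient $|1-w_k|^2/(1-|w_k|^2)\leq C$ is compatible with $|1-w_k|/(1-|w_k|)\to\infty$, since horocyclic approach is strictly weaker than non-tangential approach. The restrictedness of $\{f(z_k)\}$ is true, but it should be derived from the Julia quotient limit $\frac{1-\pi(f(z_k))}{1-\pi(z_k)}\to\lambda_f$ together with the phase hypothesis $\frac{1-\pi(z_k)}{|1-\pi(z_k)|}\to c$ with $\Re c>0$: writing $w_k=\pi(f(z_k))$ one gets $\frac{1-|w_k|^2}{|1-w_k|}=2\,\Re\frac{1-w_k}{|1-w_k|}-|1-w_k|\to 2\Re c>0$, which gives both the non-tangentiality and the order estimate $1-|w_k|^2\asymp|1-\pi(z_k)|$ that you need for specialness of $\{f(z_k)\}$. (The paper obtains that order estimate directly from $\liminf (1-\|f(z_k)\|)/(1-\|z_k\|)\geq\lambda_f$ and the restrictedness of $\{z_k\}$, never needing $\{f(z_k)\}$ to be restricted.) Finally, in the parabolic ``in particular'' no phase limit is assumed, so you should pass to subsequences along which the phase converges and note that the resulting value $\log 1=0$ is independent of $c$.
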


\begin{proof}
Let $\pi_1(z)=(z_1,0,\ldots, 0)$. First of all, we claim that $\{f(z_k)\}$ is special, namely
\begin{equation}\label{specialseq}
\lim_{k\to \infty} k_{\B^q}(\pi_1(f(z_k)), f(z_k))=0.
\end{equation}
Equation \eqref{specialseq} is equivalent to
\begin{equation}\label{special-ball}
    \lim_{k\to \infty} \frac{\|f(z_k)-\langle f(z_k), e_1\rangle e_1\|^2}{1-|\langle f(z_k),e_1\rangle|^2}=0.
\end{equation}
Since $\{z_k\}$ is admissible, by Rudin's Julia-Wolff-Carath\'eodory theorem (see \cite[Thm. 8.5.6]{Ru} or \cite[Thm. 2.2.29]{A}) it follows that
\begin{equation}\label{p1}
    \lim_{k\to \infty}\frac{\|f(z_k)-\langle f(z_k), e_1\rangle e_1\|}{|1-\langle z_k, e_1\rangle|^{1/2}}=0.
\end{equation}
Notice also that
\begin{equation}\label{stima-alto}
\begin{split}
   \liminf_{k\to \infty} \frac{1-|\langle f(z_k), e_1\rangle|}{|1-\langle z_k,e_1\rangle|}& \geq \liminf_{k\to \infty}\frac{1-\| f(z_k)\|}{\|e_1-z_k\|}\frac{\|e_1-z_k\|}{|1-\langle z_k,e_1\rangle|}\\&\geq \liminf_{k\to \infty}\frac{1-\| f(z_k)\|}{1-\|z_k\|}\frac{\|e_1-z_k\|}{|\langle e_1-z_k,e_1\rangle|} \geq \lambda_f.
\end{split}
\end{equation}
From \eqref{stima-alto} it follows that
\begin{equation}\label{stima-basso}
    \limsup_{k\to \infty} \frac{|1-\langle z_k,e_1\rangle|}{1-|\langle f(z_k), e_1\rangle|^2}\leq C<+\infty
\end{equation}
for some constant $C>0$. Hence by \eqref{p1} and \eqref{stima-basso} we have
\begin{equation*}
\begin{split}
\limsup_{k\to \infty}& \frac{\|f(z_k)-\langle f(z_k), e_1\rangle e_1\|^2}{1-|\langle f(z_k),e_1\rangle|^2}\\&=\limsup_{k\to \infty}\left(\frac{\|f(z_k)-\langle f(z_k), e_1\rangle e_1\|}{|1-\langle z_k,e_1\rangle|^{1/2}}\right)^2\cdot \frac{|1-\langle z_k,e_1\rangle|}{1-|\langle f(z_k), e_1\rangle|^2}=0,
\end{split}
\end{equation*}
and \eqref{special-ball} is proved.

Now, in order to prove \eqref{special-formula}, we notice that by \eqref{specialseq} and since $\{z_k\}$ is admissible,
\begin{equation*}
\begin{split}
\limsup_{k\to \infty} k_{\B^q}(z_k, f(z_k))&\leq \limsup_{k\to \infty}[k_{\B^q}(\pi_1(z_k), \pi_1(f(z_k)))+k_{\B^q}(\pi_1(f(z_k)), f(z_k))\\&+k_{\B^q}(\pi_1(z_k), z_k)] =\limsup_{k\to \infty}k_{\B^q}(\pi_1(z_k), \pi_1(f(z_k))).
\end{split}
\end{equation*}
Thus we are left to compute  $\limsup_{k\to \infty}k_{\B^q}(\pi_1(z_k), \pi_1(f(z_k)))$, and we show  that such a limit actually exists. Now,
$k_{\B^q}(\pi_1(z_k), \pi_1(f(z_k)))=\log \frac{1+|T_k|}{1-|T_k|}$, where, setting $\zeta_k:=\langle z_k, e_1\rangle$ and $g(z_k):=\langle f(z_k), e_1\rangle$ we have
\[
T_k:=\frac{\zeta_k -g(z_k)}{1-\overline{\zeta_k}g(z_k)}.
\]
A direct computation shows
\begin{equation}\label{Ttt}
T_k=\frac{1}{\overline{\zeta_k}+\frac{1-\zeta_k}{1-g(z_k)}\frac{\overline{1-\zeta_k}}{1-\zeta_k}}
-\frac{1}{\overline{\zeta_k}\frac{1-g(z_k)}{1-\zeta_k}+\frac{\overline{1-\zeta_k}}{1-\zeta_k}}
\end{equation}
By Rudin's Julia-Wolff-Carath\'eodory theorem, $\frac{1-g(z_k)}{1-\zeta_k}\to \lambda_f$. Moreover, since $\{\pi_1(z_k)\}$ converges to $e_1$ non-tangentially by hypothesis, it follows that $\Re c\neq 0$. Then
\[
\frac{\overline{1-\zeta_k}}{1-\zeta_k}=\frac{\overline{1-\zeta_k}}{|1-\zeta_k|}\frac{|1-\zeta_k|}{1-\zeta_k}\to \frac{\overline{c}}{c}=\overline{c}^2\neq -1.
\]
Hence, taking the limit in \eqref{Ttt} it follows that $T_k\to \frac{1-\lambda_f}{\overline{c}^2+\lambda_f}$ as $k\to \infty$. Therefore
\begin{equation*}
\limsup_{k\to \infty} k_{\B^q}(z_k, f(z_k))\leq \lim_{k\to \infty}k_{\B^q}(\pi_1(z_k), \pi_1(f(z_k)))=\log \frac{|\overline{c}^2+\lambda_f|+(1-\lambda_f)}{|\overline{c}^2+\lambda_f|-(1-\lambda_f)}.
\end{equation*}
By the contractivity property of the Kobayashi distance under holomorphic maps, for all $k$ we have $k_{\B^q}(\pi_1(z_k), \pi_1(f(z_k)))\leq k_{\B^q}(z_k, f(z_k))$. Thus we have the result.
\end{proof}

\begin{remark}\label{ammissibile1}
Let $\{z_k\}\subset \B^q$ be admissible, with limit $e_1$ and let $f\in \Hol(\B^q,\B^q$) with Denjoy-Wolff point $e_1$. Since $\{z_k\}$ eventually belongs to  Kor\'anyi region with vertex $e_1$, it follows that  $\{f(z_k)\}$ is eventually contained in a Kor\'anyi region with vertex $e_1$ (see, {\sl e.g.}, \cite[Lemma 1.10]{Br}). By Lemma \ref{BGP} it follows that  $\{f(z_k)\}$ is restricted, and
by \eqref{specialseq} we have that the sequence  $\{f(z_k)\}$ is admissible at $e_1$.
\end{remark}

\subsection{A generalization of the Lindel\"of theorem}\label{sec-lindelof}

The classical Lindel\"of theorem states that if $h\in \Hol(\D,\D)$  admits a limit $L\in\overline{\D}$ along a continuous curve in $\D$ which tends to $1$ then $h$ has non-tangential limit $L$ at $1$. Such a result has been generalized to the unit ball by $\check{\hbox{C}}$irca  (see, {\sl e.g.}, \cite[Thm. 2.2.25]{A} or \cite[Theorem 8.4.8]{Ru}) who proved that if $h\in \Hol(\B^q,\C^k)$ is  bounded and admits a limit $L\in \C^k$ along a {\sl special} continuous curve in $\B^q$ which converges to $e_1$, then $\angle_K\lim_{z\to e_1}h(z)=L$. Such a result (and its generalization for normal functions or functions which are bounded in Kor\'anyi regions) has been generalized to strongly convex domains (using the so-called Lempert projection devices) and  to strongly pseudoconvex domains by Abate \cite{A, A0, A1}, by Cima and Krantz \cite{CK} and others (we refer the reader to the paper \cite{A2} for a detailed account about the subject).

Here we prove a generalization of Lindel\"of theorem in the ball which does not use continuous curves but rather sequences. Such a result seems to be new also for the one-dimensional case. We point out that the same proof applies to the case of bounded strongly convex domains with smooth boundary, using the Lempert projection devices. We leave the details to the interested reader.

\begin{theorem}\label{circone}
Let $k,q\in \N$ and let $h\in \Hol(\B^q,\B^k)$. Suppose that $\{z_n\}\subset \B^q$ is a sequence converging to a point  $a\in \de \B^q$. Assume that  there exists $C>0$ such that for all $n\in \N,$
\begin{equation}\label{boundC}
k_{\B^q}(z_n, z_{n+1})\leq C,\quad \mbox{and}\quad k_{\B^q}(z_n, \langle z_n, a\rangle a)\leq C.
\end{equation}
If $\lim_{n\to \infty}h(z_n)=b\in \de \B^k$ then $K\hbox{-}\lim_{z\to a}h(z)=b$.
\end{theorem}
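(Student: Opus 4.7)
The plan is to reduce the statement to a one-dimensional Lindel\"of-type lemma via projection onto the complex line through $a$, and then appeal to the classical Lindel\"of theorem. We may assume without loss of generality that $a=e_1$. Set $\zeta_n:=\langle z_n,e_1\rangle\in\D$, $\pi_n:=\zeta_n e_1$, and consider the bounded holomorphic map $g\colon\D\to\B^k$ defined by $g(\zeta):=h(\zeta e_1)$. From $z_n\to e_1$ it follows that $\zeta_n\to 1$, and the triangle inequality combined with the two hypotheses gives $k_\D(\zeta_n,\zeta_{n+1})\le 3C$. Moreover, since $h(z_n)\to b\in\de\B^k$ and the Kobayashi balls of fixed radius around points tending to $\de\B^k$ have Euclidean diameter tending to zero, the contraction inequality $k_{\B^k}(h(z_n),g(\zeta_n))\le k_{\B^q}(z_n,\pi_n)\le C$ forces $g(\zeta_n)\to b$ as well.

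I would next prove the following one-variable Lindel\"of-type lemma: \emph{if $g\colon\D\to\B^k$ is holomorphic, $\{\zeta_n\}\subset\D$ converges to $1$ with $k_\D(\zeta_n,\zeta_{n+1})$ bounded, and $g(\zeta_n)\to b\in\de\B^k$, then $g$ admits non-tangential limit $b$ at $1$}. Joining consecutive $\zeta_n$'s by hyperbolic geodesic segments produces a continuous arc $\gamma\colon[0,\infty)\to\D$. Since on the $n$-th segment every point lies within fixed Kobayashi distance of $\zeta_n$, the shrinking of Kobayashi balls at $\de\D$ yields $\gamma(t)\to 1$ as $t\to\infty$, and similarly $g(\gamma(t))\to b$ by the contraction inequality and the shrinking at $\de\B^k$. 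The classical Lindel\"of theorem, applied componentwise to the bounded holomorphic map $g$ along the arc $\gamma$, then delivers the non-tangential limit $b$ at $1$.

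To conclude, take any sequence $\{w_m\}\subset\B^q$ converging to $e_1$ and eventually contained in some Kor\'anyi region $K(e_1,R)$. By Lemma~\ref{BGP} the sequence $\nu_m:=\langle w_m,e_1\rangle$ tends to $1$ non-tangentially in $\D$ and $k_{\B^q}(w_m,\nu_m e_1)$ is uniformly bounded. The one-dimensional lemma applied to $g$ gives $h(\nu_m e_1)=g(\nu_m)\to b$, and a final invocation of the Kobayashi-ball shrinking principle via $k_{\B^k}(h(w_m),h(\nu_m e_1))\le k_{\B^q}(w_m,\nu_m e_1)$ yields $h(w_m)\to b$, which is exactly the $K$-limit statement. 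The main obstacle is the one-dimensional lemma itself: the standard \v{C}irka-type Lindel\"of theorems for the ball demand that the approach curve be \emph{special} (distance to the normal radius tending to zero), whereas here only boundedness of this distance is available; the Kobayashi-ball shrinking at the boundary of the ball is precisely the ingredient that bridges this gap and lets us bypass the special-curve hypothesis.
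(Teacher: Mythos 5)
Your proof is correct and follows essentially the same route as the paper's: reduce to the radial slice $\D e_1$ using the bound $k_{\B^q}(z_n,\langle z_n,a\rangle a)\le C$ together with the shrinking of Kobayashi balls near $\de\B^k$, interpolate the sequence into a continuous curve so that the classical one-variable Lindel\"of theorem applies componentwise, and transfer back to Kor\'anyi regions via Lemma \ref{BGP}. The only difference is technical and cosmetic: the paper interpolates with Euclidean affine segments in $\B^q$ and invokes the convexity of $k_{\B^q}$, whereas you project to the disc first and join consecutive points by hyperbolic geodesics; both devices serve the same purpose.
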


\begin{proof}
Without loss of generality, we can suppose $a=e_1$  and set $\pi(z):=\langle z, e_1\rangle e_1$.

Recall that the Kobayashi distance of $\B^q$ is convex, namely (see, {\sl e.g.}, \cite[Lemma 4.1]{KKR}):
\begin{enumerate}
  \item for all $x,y,z,w \in \B^q$ and $t\in [0,1]$ we have
  \begin{equation}\label{convex1}
  k_{\B^q}(tx+(1-t)y, tz+(1-t)w)\leq \max\{k_{\B^q}(x,z),k_{\B^q}(y,w)\};
  \end{equation}
  \item for all  $x,y\in \B^q$ and  $t,s\in [0,1]$ we have
  \begin{equation}\label{convex2}
  k_{\B^q}(tx+(1-t)y, sx +(1-s)y)\leq k_{\B^q}(x,y).
  \end{equation}
\end{enumerate}

Define the  curve $\gamma:[0,+\infty)\to \B^q$ as follows. For $t\geq 0$, let $I(t)$ denote the integer part of $t$ and $[t]:=t-I(t)\in [0,1)$. Then we set $\gamma(t):=(1-[t])z_n+[t]z_{n+1}$ if $t\in [n,n+1)$, $n\in \N$. The curve is the piecewise affine interpolation of the sequence $\{z_n\}$ and it is clearly continuous.

Notice that $\gamma(n)=z_n$ for $n\in \N$ and $\lim_{\N\ni n\to \infty}z_n=e_1$ by assumption. For $m\in \N$ let $B_m$ denote the Euclidean ball of center $e_1$ and radius $1/m$. Then for every fixed $m$, the sequence $\{z_n\}$ is eventually contained in $B_m$ and, since $B_m\cap \B^q$ is convex, the curve $\gamma$ is eventually contained in $B_m$. Hence $\lim_{t\to \infty}\gamma(t)=e_1$.

Note that for all $n\in \N$, $h(\gamma(n))=h(z_n)$, hence $\lim_{\N\ni n\to \infty}h(\gamma(n))=b$. Let $\{t_k\}\subset [0,+\infty)$ be any sequence which converges to $\infty$, and let $n_k:=I(t_k)\in \N$, so that  $t_k\in [n_k, n_k+1)$. Then
\begin{equation*}
\begin{split}
k_{\B^k}(h(\gamma(t_k)), h(\gamma(n_k)))&\leq k_{\B^q}(\gamma(t_k), \gamma(n_k))= k_{\B^q}((1-[t_k])z_{n_k}+[t_k] z_{n_k+1}, z_{n_k})\\&\stackrel{\eqref{convex2}}{\leq} k_{\B^q}(z_{n_k}, z_{n_k+1})\stackrel{\eqref{boundC}}{\leq} C.
\end{split}
\end{equation*}
Hence $\lim_{k\to \infty}h(\gamma(t_k))=b$, and by the arbitrariness of the sequence $\{t_k\}$ we have  $\lim_{t\to \infty}h(\gamma(t))=b$.

Next, define $\beta(t):=\pi(\gamma(t))$. Clearly, $\beta:[0,+\infty)\to \B^q$ is continuous and moreover, $\lim_{t\to\infty}\beta(t)=e_1$. Trivially, the curve $\beta$ is contained in $\D e_1$.  As before,  let $\{t_k\}\subset [0,+\infty)$ be any sequence which converges to $\infty$, and let $n_k:=I(t_k)\in \N$. Then
\begin{equation*}
\begin{split}
k_{\B^k}(h(\gamma(t_k)), h(\beta(t_k)))&\leq k_{\B^k}(\gamma(t_k), \beta(t_k))\\&=k_{\B^q}((1-[t_k])z_{n_k}+[t_k]z_{n_k+1}, (1-[t_k])\pi(z_{n_k})+[t_k]\pi(z_{n_k+1}))\\
&\stackrel{\eqref{convex1}}{\leq}\max\{k_{\B^q}(z_{n_k},\pi(z_{n_k})), k_{\B^q}(z_{n_k+1}, \pi(z_{n_k+1}))\}\stackrel{\eqref{boundC}}{\leq} C.
\end{split}
\end{equation*}
Hence, since  $\lim_{k\to \infty}h(\gamma(t_k))=b$, it follows that $\lim_{k\to \infty}h(\beta(t_k))=b$, and by the arbitrariness of the sequence $\{t_k\}$ we have  $\lim_{t\to \infty}h(\beta(t))=b$.

The classical Lindel\"of theorem applied to each component of the map  $\D\ni \zeta \mapsto h(\zeta e_1)\in \B^k$ implies that $\angle\lim_{\zeta\to 1}h(\zeta e_1)=b$.

If $\{w_n\}\subset \B^q$ is any sequence converging to $e_1$ eventually contained in a Kor\'anyi region of vertex $e_1$, then  by Lemma \ref{BGP} there exists $M>0$ such that $k_{\B^q}(w_n, \pi(w_n))<M$ for all $n\in \N$ and $\{\pi(w_n)\}$ converges to $1$ non-tangentially. Hence it follows that
\[
\lim_{n\to \infty}k_{\B^k}(h(w_n),h(\pi(w_n))\leq \lim_{n\to \infty} k_{\B^q}(w_n, \pi(w_n))<M,
\]
and since $\lim_{n\to \infty}h(\pi(w_n))=b$, we have $\lim_{n\to \infty}h(w_n)=b$, proving the theorem.
\end{proof}

It is easy to see that the previous proof can be adapted to the case $b\in \B^k$ assuming that the sequence $\{z_n\}$ is special and the hyperbolic distance between two consecutive elements tends to zero (in this case we only obtain  the existence of the admissible limit), namely:

\begin{theorem}\label{circone2}
Let $k,q\in \N$ and  let $h\in\Hol(\B^q,\B^k)$. Suppose that $\{z_n\}\subset \B^q$ is a {\sl special} sequence converging to a point  $a\in \de \B^q$. Assume that
\begin{equation}\label{boundC2}
\lim_{n\to \infty} k_{\B^q}(z_n, z_{n+1})=0.
\end{equation}
If $\lim_{n\to \infty}h(z_n)=b\in \overline{\B^k}$ then $\angle_K\lim_{z\to e_1}h(z)=b$.
\end{theorem}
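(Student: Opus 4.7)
The plan is to mimic the proof of Theorem \ref{circone}, using the piecewise affine interpolation of $\{z_n\}$ and reducing to the classical Lindelöf theorem on the disc, but exploiting the stronger vanishing hypotheses of the present statement. Without loss of generality assume $a=e_1$, and write $\pi(z):=\langle z,e_1\rangle e_1$. Define $\gamma\colon[0,+\infty)\to\B^q$ exactly as before: for $t\in[n,n+1)$, set $\gamma(t):=(1-[t])z_n+[t]z_{n+1}$. Then $\gamma$ is continuous and, by the same convexity argument used in Theorem \ref{circone}, $\gamma(t)\to e_1$ as $t\to+\infty$.

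The first substantial step is to prove $h(\gamma(t))\to b$. For $t\in[n,n+1)$, property \eqref{convex2} gives
\begin{equation*}
k_{\B^q}(\gamma(t),z_n)=k_{\B^q}\bigl((1-[t])z_n+[t]z_{n+1},\,z_n\bigr)\leq k_{\B^q}(z_n,z_{n+1})\to 0
\end{equation*}
by \eqref{boundC2}. The contracting property of $h$ then yields $k_{\B^k}(h(\gamma(t)),h(z_n))\to 0$; since $h(z_n)\to b\in\overline{\B^k}$, we conclude that $h(\gamma(t))\to b$. Here I use the standard fact that a Kobayashi-vanishing perturbation of a sequence converging in the Euclidean topology of $\overline{\B^k}$ converges to the same limit, regardless of whether $b$ lies in the interior or on the boundary (this is the only place where one has to be slightly careful when $b\in\de\B^k$, but it follows from the shrinking of Kobayashi balls near the boundary).

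Next, set $\beta(t):=\pi(\gamma(t))$, a continuous curve in $\D e_1$ terminating at $e_1$. Using \eqref{convex1} together with the hypothesis that $\{z_n\}$ is special, we have
\begin{equation*}
k_{\B^q}(\gamma(t),\beta(t))\leq\max\bigl\{k_{\B^q}(z_n,\pi(z_n)),\,k_{\B^q}(z_{n+1},\pi(z_{n+1}))\bigr\}\to 0,
\end{equation*}
so $k_{\B^k}(h(\gamma(t)),h(\beta(t)))\to 0$ and hence $h(\beta(t))\to b$ by the same principle as above. Applying the classical Lindelöf theorem componentwise to the bounded holomorphic map $\D\ni\zeta\mapsto h(\zeta e_1)\in\B^k$ yields $\angle\lim_{\zeta\to 1}h(\zeta e_1)=b$.

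Finally, to deduce $\angle_K\lim_{z\to e_1}h(z)=b$, take any admissible sequence $\{w_n\}\to e_1$. Since $\{w_n\}$ is restricted, $\pi(w_n)\to 1$ non-tangentially, so the previous step gives $h(\pi(w_n))\to b$. Since $\{w_n\}$ is special, $k_{\B^q}(w_n,\pi(w_n))\to 0$, whence $k_{\B^k}(h(w_n),h(\pi(w_n)))\to 0$ and therefore $h(w_n)\to b$. The main (mild) obstacle throughout is to handle uniformly both cases $b\in\B^k$ and $b\in\de\B^k$ when concluding Euclidean convergence from Kobayashi proximity; this is a standard but indispensable fact about the geometry of the Kobayashi ball.
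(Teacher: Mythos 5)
Your proof is correct and is precisely the adaptation of the proof of Theorem \ref{circone} that the paper intends (it gives no separate argument for Theorem \ref{circone2}, only the remark that the previous proof adapts to vanishing bounds). The one point you flag --- passing from Kobayashi proximity to Euclidean convergence when $b\in\de\B^k$, via the shrinking of Kobayashi balls near the boundary --- is the same standard fact the paper already uses without comment in the final step of the proof of Theorem \ref{circone}.
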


\subsection{Canonical Kobayashi hyperbolic semi-models in the unit ball}

If  $f\colon \B^q\to\B^q$ is an elliptic univalent self-map, then by  Proposition \ref{notdivergent} it admits a canonical Kobayashi hyperbolic semi-model $(M, \rho, f|_M)$ where $M$ is biholomorphic to a ball $\B^k$ with $0\leq k\leq q$.  Since the fixed points of $f$ are contained in $M$, we have that the canonical Kobayashi hyperbolic semi-model of an elliptic self-map of $\B^q$ is either trivial or of elliptic type.

In order to deal with the non-elliptic cases, we  introduce an invariant way of dealing with the dilation of a holomorphic self-map with no interior fixed points. Since automorphisms act doubly transitively on $\de \B^n$,  we will often assume, without any loss of generality, that the Denjoy-Wolff point is the point $e_1$.

A similar result is proved in \cite[Proposition 3.4]{CD} in the case of the unit disc assuming the existence of a non-tangential orbit.
We recall that the definition of the divergence rate $c(f)$ is given in Definition \ref{divrate}.
\begin{proposition}\label{uguale}
Let $f\in\Hol(\B^q,\B^q$) with no interior fixed points.  Assume $e_1$ is the Denjoy-Wolff point  of $f$ and let  $\lambda_f\in (0, 1]$ be its dilation. Then
\begin{equation}\label{grossa}
c(f)=- \log \lambda_f.
\end{equation}
\end{proposition}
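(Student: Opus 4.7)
The plan is to prove $c(f)\geq -\log\lambda_f$ and $c(f)\leq -\log\lambda_f$ separately; by Proposition \ref{base}, I may work with the orbit of the origin $0\in\B^q$.

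For the lower bound I would iterate Julia's lemma \eqref{Julia}. Since $0\in\overline{E(e_1,1)}$, the invariance $f(E(e_1,R))\subset E(e_1,\lambda_f R)$ yields $f^n(0)\in\overline{E(e_1,\lambda_f^n)}$ for every $n\geq 0$. Setting $\pi_1\colon\B^q\to\D$, $\pi_1(z)=\langle z,e_1\rangle$, and $\zeta_n:=\pi_1(f^n(0))$, the horosphere condition translates into $|1-\zeta_n|^2\leq\lambda_f^n(1-|\zeta_n|^2)$, i.e., $\zeta_n$ lies in the closed horodisc at $1$ in $\D$. A direct computation identifies this horodisc with the closed Euclidean disc of center $\tfrac{1}{1+\lambda_f^n}$ and radius $\tfrac{\lambda_f^n}{1+\lambda_f^n}$, so $|\zeta_n|\geq\tfrac{1-\lambda_f^n}{1+\lambda_f^n}$ and hence $k_\D(0,\zeta_n)\geq -n\log\lambda_f$. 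Since $\pi_1$ is Kobayashi-contractive, $k_{\B^q}(0,f^n(0))\geq k_\D(0,\zeta_n)\geq -n\log\lambda_f$, and dividing by $n$ gives $c(f)\geq -\log\lambda_f$.

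For the upper bound I would exploit the liminf definition of $\lambda_f$: for every $\epsilon$ with $0<\epsilon<\lambda_f$ (or $0<\epsilon<1$ when $\lambda_f=1$) there is a neighborhood $U$ of $e_1$ in $\overline{\B^q}$ such that $1-\|f(z)\|\geq(\lambda_f-\epsilon)(1-\|z\|)$ for every $z\in U\cap\B^q$. Since $f^n(0)\to e_1$, there exists $n_0$ with $f^n(0)\in U$ for all $n\geq n_0$, so iterating the inequality gives $1-\|f^n(0)\|\geq(\lambda_f-\epsilon)^{n-n_0}(1-\|f^{n_0}(0)\|)$ for all $n\geq n_0$. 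Combined with the elementary estimate $k_{\B^q}(0,z)=\log\tfrac{1+\|z\|}{1-\|z\|}\leq\log 2-\log(1-\|z\|)$, this yields $k_{\B^q}(0,f^n(0))\leq -(n-n_0)\log(\lambda_f-\epsilon)+O(1)$, whence $c(f)\leq -\log(\lambda_f-\epsilon)$; letting $\epsilon\to 0^+$ finishes the argument. In the parabolic case $\lambda_f=1$ this reads $c(f)\leq 0$, which combined with $c(f)\geq 0$ forces $c(f)=0=-\log\lambda_f$.

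The only delicate point is the iteration in the upper bound: the liminf definition of $\lambda_f$ is purely local near $e_1$, so one must invoke the convergence $f^n(0)\to e_1$ to guarantee that the orbit eventually enters the neighborhood where the local inequality is available. Everything else reduces to elementary Kobayashi-distance computations on $\B^q$ and on $\D$.
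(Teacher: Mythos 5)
Your proof is correct and follows essentially the same route as the paper's: the lower bound comes from iterating Julia's lemma so that $f^n(0)\in\overline{E(e_1,\lambda_f^n)}$ and then bounding the Kobayashi distance from the origin to that horosphere by $-n\log\lambda_f$, while the upper bound is the ratio-to-root estimate on $1-\|f^n(0)\|$ coming from the $\liminf$ definition of $\lambda_f$, which you simply unwind with an explicit $\epsilon$. The only cosmetic difference is that you obtain the horosphere estimate by projecting to the slice $\D e_1$ instead of computing $\inf_{z\in E(e_1,R)}k_{\B^q}(0,z)=-\log R$ directly.
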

\begin{proof}
Define $z_m:=    f^m(0)$.
Thus $$\liminf_{m\to\infty}\frac{1-\|z_{m+1}\|}{1-\|z_m\|}\geq \lambda_f,$$ and
$$\liminf_{m\to\infty}(1-\|z_m\|)^{1/m}\geq \liminf_{m\to\infty}\frac{1-\|z_{m+1}\|}{1-\|z_m\|}\geq \lambda_f.$$
Hence,
 $$\lim_{m\to\infty}\frac{k_{\B^q}(0,z_m)}{m}=\lim_{m\to\infty}\log \left(\frac{1+\|z_m\|}{1-\|z_m\|}\right)^{1/m}\leq  \log (1/\lambda_f)=-\log(\lambda_f).$$
Thus if $f$ is parabolic, we have $$\lim_{m\to\infty} \frac{1}{m}k_{\B^q}{(0,f^m(0))}=0,$$
and \eqref{grossa} holds.

Assume now $f$ is hyperbolic. Let $R\in (0,1]$ and let $E(e_1, R)$ be a horosphere. Note that
\begin{equation}\label{min-oro}
\inf_{z \in E(e_1, R)} k_{\B^q}(0,z)=k_{\B^q}\left(0,(\frac{1-R}{1+R},0)\right)=-\log R.
\end{equation}
Since $0\in \partial E(e_1, 1)$, by \eqref{Julia}, it follows that $f^m(0)\in \overline{E(e_1, \lambda_f^m)}$ for all $m\in \N$. Hence by \eqref{min-oro},
$$ \frac{1}{m}k_{\B^q}(0,f^m(0))\geq  \frac{1}{m} k_{\B^q}\left(0,(\frac{1-\lambda_f^m}{1+\lambda_f^m},0)\right)=-\log\lambda_f, $$ which proves that $\lim_{m\to\infty} \frac{1}{m}k_{\B^q}(0,f^m(0))=-\log\lambda_f$.

\end{proof}

\begin{corollary}\label{diminuisco}
Let $q, k\in \N$. Let $f\in \Hol(\B^q, \B^q)$ with  no interior fixed points  and let $\lambda_f$ be its dilation. Let $g\in \Hol(\B^k, \B^k)$ with  no interior fixed points  and let $\lambda_g$ be its dilation.
Suppose that $f$ is semiconjugate to $g$, that is, suppose there exists $h\in\Hol(\B^q,\B^k)$  such that $h\circ f=g\circ h.$ Then $\lambda_f\leq \lambda_g$.

In particular, a parabolic self-map of $\B^q$  cannot be semiconjugate to a hyperbolic self-map of $\B^k$.
\end{corollary}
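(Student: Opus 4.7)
The plan is to combine the two tools already developed in the paper: the general monotonicity of the divergence rate under semiconjugation (Lemma \ref{c}), and the explicit computation of the divergence rate for non-elliptic self-maps of the ball in terms of the dilation (Proposition \ref{uguale}).

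First I would invoke Proposition \ref{uguale} for both $f$ and $g$. Since $f\in\Hol(\B^q,\B^q)$ and $g\in\Hol(\B^k,\B^k)$ both have no interior fixed point, their Denjoy-Wolff points lie on the respective boundary spheres and their dilations $\lambda_f,\lambda_g\in(0,1]$ are defined. Proposition \ref{uguale} then yields
\[
c(f)=-\log\lambda_f,\qquad c(g)=-\log\lambda_g.
\]
Next, because $h\in\Hol(\B^q,\B^k)$ realizes a holomorphic semiconjugation $h\circ f=g\circ h$, Lemma \ref{c} applies and gives $c(f)\geq c(g)$. Substituting, $-\log\lambda_f\geq -\log\lambda_g$, equivalently $\lambda_f\leq\lambda_g$, which is the desired inequality.

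For the "in particular" statement, assume towards a contradiction that $f$ is parabolic and $g$ is hyperbolic. Then $\lambda_f=1$ while $\lambda_g\in(0,1)$, so $\lambda_f\leq\lambda_g$ would force $1\leq\lambda_g<1$, a contradiction. Hence no such semiconjugation $h$ can exist.

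There is no real obstacle here: the entire content is packaged inside Proposition \ref{uguale} and Lemma \ref{c}. The only point deserving a brief verification is that the hypothesis of Proposition \ref{uguale} is genuinely met for both maps, which it is because both $f$ and $g$ are assumed to have no interior fixed points; thus their Denjoy-Wolff theory applies and the identification $c(\cdot)=-\log\lambda_{(\cdot)}$ is available on both sides of the semiconjugation.
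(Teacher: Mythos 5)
Your proposal is correct and is exactly the paper's argument: the authors also deduce the inequality by combining Proposition \ref{uguale} (which identifies $c(f)=-\log\lambda_f$ and $c(g)=-\log\lambda_g$) with the monotonicity $c(f)\geq c(g)$ from Lemma \ref{c}. The ``in particular'' statement follows from the inequality just as you wrote.
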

\begin{proof}
It follows from Proposition \ref{uguale} and Lemma \ref{c}.
\end{proof}

If the semiconjugation is the intertwining mapping of a canonical Kobayashi hyperbolic semi-model, then the inequality between dilations is in fact  an equality.

\begin{corollary}\label{nointerior}
Let $f\in \Hol(\B^q, \B^q)$  be univalent and with  no interior fixed points and let $\lambda_f$ be its dilation.  Let $(\B^k,\ell,\tau)$ be a canonical Kobayashi hyperbolic semi-model for $f$ given by Theorem \ref{canonical}. Suppose $\tau$ has no interior fixed points, and let $\lambda_\tau$ be its dilation. Then $\lambda_f=\lambda_\tau$.
\end{corollary}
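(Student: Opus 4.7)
The plan is to combine three earlier results: Proposition \ref{uguale} (which relates the divergence rate to the dilation for self-maps of the ball with no interior fixed points), Lemma \ref{usare} (which says that $c(f)=c(\tau)$ for the canonical Kobayashi hyperbolic semi-model), and the applicability of both. This reduces the proof to a short chain of equalities.

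First, I would verify that Lemma \ref{usare} applies to our setting. Since $X=\B^q$ is itself cocompact (the group ${\sf Aut}(\B^q)$ acts transitively), and $f$ is univalent on all of $\B^q$, we may take $A=\B^q$ as the $f$-absorbing hyperbolic domain with $\B^q/{\sf Aut}(\B^q)$ compact. Thus Lemma \ref{usare} gives
\begin{equation*}
c(f)=c(\tau).
\end{equation*}

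Second, by Proposition \ref{uguale} applied to $f\in \Hol(\B^q,\B^q)$ with no interior fixed points, we have $c(f)=-\log\lambda_f$. Applying the same proposition to $\tau\in{\sf Aut}(\B^k)\subset\Hol(\B^k,\B^k)$ with no interior fixed points, we have $c(\tau)=-\log\lambda_\tau$. Combining with the previous equality yields $\log\lambda_f=\log\lambda_\tau$, that is $\lambda_f=\lambda_\tau$, as required.

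There is essentially no obstacle: the corollary is a clean payoff of the machinery assembled in Sections \ref{step} and \ref{kob-semi} together with Proposition \ref{uguale}. The only point worth checking carefully is the hypothesis of Lemma \ref{usare}, which requires the $f$-absorbing cocompact hyperbolic domain, and this is immediate for $A=\B^q$. Note that the hypothesis that $\tau$ has no interior fixed points is essential to invoke Proposition \ref{uguale} on the $\B^k$ side; without it, one only obtains the inequality of Corollary \ref{diminuisco}.
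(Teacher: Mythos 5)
Your proof is correct and follows exactly the paper's argument: Lemma \ref{usare} gives $c(f)=c(\tau)$, and Proposition \ref{uguale} applied to both $f$ and $\tau$ converts this into $\lambda_f=\lambda_\tau$. The extra check that $A=\B^q$ satisfies the hypotheses of Lemma \ref{usare} is a sensible addition but does not change the route.
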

\begin{proof}
By Lemma \ref{usare} we have $c(f)=c(\tau)$. The result follows from Proposition \ref{uguale}.
\end{proof}

Before considering in detail the hyperbolic and parabolic case, we  prove a regularity result for the intertwining map at the Denjoy-Wolff point:

\begin{proposition}\label{dove-va}
Let $f\in \Hol(\B^q, \B^q)$  be a  self-map with no interior fixed points  and let  $a\in \de \B^q$ be its Denjoy-Wolff point.  Suppose  $f$ is semiconjugate to a non-elliptic  $\tau\in\Aut(\B^k)$, {\sl i.e.}, there exists $\ell\in\Hol(\B^q,\B^k)$  such that $\ell\circ f=\tau\circ \ell$. Let $b\in \de \B^k$ be the Denjoy-Wolff point of $\tau$.  Then
\begin{enumerate}
  \item if $f$ is hyperbolic, $K\hbox{-}\lim_{z\to a}\ell(z)=b$,
  \item if $f$ is parabolic, $\angle_K\lim_{z\to a}\ell(z)=b$.
\end{enumerate}

\end{proposition}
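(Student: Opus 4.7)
The plan is to combine the Lindel\"of-type theorems \ref{circone} and \ref{circone2} with Proposition \ref{limit at DW} and a compactness argument in $\overline{\B^k}$. In both parts, the intertwining relation $\ell\circ f=\tau\circ\ell$ immediately yields $\ell(f^n(z_0))=\tau^n(\ell(z_0))\to b$ for every $z_0\in\B^q$, since $b$ is the Denjoy-Wolff point of the non-elliptic automorphism $\tau$; the Kobayashi contractivity of $\ell$ also gives $k_{\B^q}(f^n(z_0),f^{n+1}(z_0))\leq k_{\B^q}(z_0,f(z_0))$, a bounded quantity.

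For part (2), with $f$ parabolic, I would take an arbitrary admissible sequence $\{w_k\}\subset\B^q$ with $w_k\to a$, rather than an orbit. Since $\lambda_f=1$, the parabolic case of Proposition \ref{limit at DW} yields $k_{\B^q}(w_k,f(w_k))\to 0$, hence $k_{\B^k}(\ell(w_k),\tau(\ell(w_k)))\to 0$ by Kobayashi contractivity. Corollary \ref{diminuisco} forces $\lambda_\tau\geq\lambda_f=1$, so $\tau$ is a parabolic automorphism whose unique fixed point in $\overline{\B^k}$ is $b$. Every accumulation point $p\in\overline{\B^k}$ of $\{\ell(w_k)\}$ must satisfy $\tau(p)=p$: if $p\in\B^k$, continuity of $\tau$ and $k_{\B^k}$ forces $k_{\B^k}(p,\tau(p))=0$, contradicting non-ellipticity; if $p\in\partial\B^k$, then the continuous extension of $\tau$ to $\overline{\B^k}$ together with the fact that sequences in $\B^k$ converging to distinct boundary points have Kobayashi distance tending to infinity forces $\tau(p)=p$. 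Thus $p=b$, so $\ell(w_k)\to b$ along every admissible sequence, which is exactly $\angle_K\lim_{z\to a}\ell(z)=b$.

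For part (1), with $f$ hyperbolic, the preceding argument fails because $k_{\B^q}(w_k,f(w_k))$ only remains bounded (by the positive quantity in Proposition \ref{limit at DW}) rather than vanishing. Instead, I would apply Theorem \ref{circone} directly to the orbit $z_n:=f^n(z_0)$: the step is bounded, $\ell(z_n)=\tau^n(\ell(z_0))\to b$, and it remains only to verify the projection bound $k_{\B^q}(z_n,\langle z_n,a\rangle a)\leq C$, equivalently by Lemma \ref{BGP} that $\{z_n\}$ lies eventually in some Kor\'anyi region $K(a,R)$. Julia's lemma gives the upper horospheric bound $|1-\langle z_n,a\rangle|^2\leq R_0\lambda_f^n(1-\|z_n\|^2)$ from $z_n\in\overline{E(a,R_0\lambda_f^n)}$, and a matching lower bound $1-\|z_n\|\geq c\lambda_f^n$ (with $c>0$) follows from iterating the Julia--Wolff--Carath\'eodory estimate $\liminf_{z\to a}(1-\|f(z)\|)/(1-\|z\|)=\lambda_f$ once the orbit has entered a sufficiently small horosphere. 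Combining the two bounds produces the Kor\'anyi inclusion, and Theorem \ref{circone} then delivers $K\hbox{-}\lim_{z\to a}\ell(z)=b$. The main obstacle is precisely this tight lower bound on $1-\|z_n\|$, since the JWC inequality is only asymptotic; it is handled by first noting that the orbit eventually enters any prescribed horosphere and then applying JWC geometrically thereafter.
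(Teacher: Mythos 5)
Your part (2) is correct and is essentially the paper's argument: take an admissible sequence, use Proposition \ref{limit at DW} with $\lambda_f=1$ to get vanishing steps, use Corollary \ref{diminuisco} to force $\tau$ to be parabolic, and identify every accumulation point of $\{\ell(w_k)\}$ in $\overline{\B^k}$ with the unique fixed point $b$ of $\tau$. Your part (1) also follows the paper's route (bounded step along the orbit, $\ell(z_n)=\tau^n(\ell(z_0))\to b$, then Theorem \ref{circone}), except for the justification of the Kor\'anyi inclusion of the orbit, where there is a genuine gap.

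The problem is the claimed lower bound $1-\|z_n\|\ge c\lambda_f^n$. The Julia--Wolff--Carath\'eodory statement you invoke is only a $\liminf$: for each $\epsilon>0$ it gives $1-\|f(z)\|\ge(\lambda_f-\epsilon)(1-\|z\|)$ for $z$ near $a$, and iterating yields $1-\|z_n\|\ge c_\epsilon(\lambda_f-\epsilon)^n$. Inserting this into your horospheric upper bound produces $\bigl(|1-\langle z_n,a\rangle|/(1-\|z_n\|)\bigr)^2\le C\,(\lambda_f/(\lambda_f-\epsilon))^n\to\infty$, so no fixed Kor\'anyi region is obtained. Worse, the exact bound $1-\|z_n\|\ge c\lambda_f^n$ can fail: working in the half-plane model with Denjoy--Wolff point at infinity, one can perturb $w\mapsto \lambda_f^{-1}w$ by a lower-order term so that $\Im w_{n+1}=\lambda_f^{-1}(1+\epsilon_n)\Im w_n$ with $\epsilon_n>0$, $\sum_n\epsilon_n=+\infty$, while the dilation remains $\lambda_f$; then $\Im w_n\,\lambda_f^{n}\to\infty$ and correspondingly $1-\|z_n\|=o(\lambda_f^n)$. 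The fact you actually need --- that orbits of a hyperbolic self-map eventually lie in some Kor\'anyi region of vertex $a$ --- is exactly what the paper imports from \cite[Section 3.5]{BP}, and its proof does not go through a two-sided comparison of $1-\|z_n\|$ with $\lambda_f^n$: instead one combines Julia's lemma, which gives geometric growth $\Im w_{n+1}\ge \lambda_f^{-1}\Im w_n$ of the normal component, with the non-increasing hyperbolic step $k_{\B^q}(z_n,z_{n+1})\le k_{\B^q}(z_0,z_1)$, which bounds the Euclidean displacement at step $n$ by a constant times $\Im w_n$; summing the resulting geometric series bounds the tangential components by $C\,\Im w_n$ and yields the Kor\'anyi (and special) condition. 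With that argument substituted, or with the citation, your part (1) coincides with the paper's proof.
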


\begin{proof}
(1) Assume $f$ is hyperbolic. Fix $p\in \B^q$ and let $z_n:=f^n(p)$. By \cite[Section 3.5]{BP},  any orbit of $f$ is eventually contained in a Kor\'anyi region of vertex $e_1$. By Lemma \ref{BGP} this implies that there exists $C>0$ such that for all $n\in \N$
\begin{equation}\label{Cspecial}
k_{\B^q}(z_n,\pi(z_n))\leq C.
\end{equation}
Note that $k_{\B^q}(z_n, z_{n+1})\leq k_{\B^q}(p,f(p))$ and $\lim_{n\to \infty}z_n=a$. Moreover, $\lim_{n\to \infty}\ell(z_n)=\lim_{n\to \infty}\tau^n(\ell(p))=b$, hence the result follows from Theorem \ref{circone}.

(2) Assume $f$ is parabolic. By Corollary \ref{diminuisco} we have that $\tau$ is necessarily a parabolic automorphism of $\B^k$.

Since $\tau$ is parabolic, if $y\in \overline{\B^k}$ is such that $\tau(y)=y$, then necessarily $y=b$.

Let $\{z_m\}$ be an admissible sequence which converges to $a$ and assume that $\ell(z_m)\to x\in \overline{\B^k}$. Then
\[
k_{\B^k}(\ell(z_m), \tau(\ell(z_m)))=k_{\B^k}(\ell(z_m), (\ell(f(z_m))))\leq k_{\B^q}(z_m, f(z_m)).
\]
Hence by Proposition \ref{limit at DW},
\[
\lim_{k\to \infty}k_{\B^k}(\ell(z_m), \tau(\ell(z_m)))=0.
\]
Therefore $\tau(x)=x$, which implies $x=b$.

\end{proof}

\subsection{The hyperbolic case}\label{hyper-balla}

\begin{theorem}\label{iper-modello}
Let $f\in \Hol(\B^q, \B^q)$  be a univalent hyperbolic self-map with dilation $\lambda_f\in (0,1)$ and Denjoy-Wolff point $a\in \de \B^q$.   Then  $f$ admits a canonical Kobayashi hyperbolic semi-model $(\B^k, \ell, \tau)$ of hyperbolic type with dilation $\lambda_f$. Moreover, if $b\in \de \B^k$ is the Denjoy-Wolff point of $\tau$, then $K\hbox{-}\lim_{z\to a}\ell(z)=b$.
\end{theorem}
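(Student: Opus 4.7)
The plan is to assemble this theorem almost entirely from the machinery developed in the preceding sections; the main work is verifying that the automorphism $\tau$ produced by the general construction is genuinely hyperbolic with the correct dilation.

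First I would set up the model and semi-model. Since $f$ is univalent on $\B^q$, I can take $A = \B^q$ as an $f$-absorbing domain on which $f$ is univalent, so Theorem \ref{tifa} produces a model $(\Omega, h, \psi)$ for $f$. Because $\B^q$ is hyperbolic and homogeneous (so $\B^q/\Aut(\B^q)$ is trivially compact), Theorem \ref{canonical} applies and yields a canonical Kobayashi hyperbolic semi-model $(Z, r\circ h, \tau)$, where $r\colon \Omega\to Z$ is the Forn\ae ss--Sibony retraction of Theorem \ref{aeris}. Theorem \ref{aeris}(iv) then guarantees $Z$ is biholomorphic to $\B^k$ for some $0\le k\le q$. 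Denote $\ell := r\circ h$, so $(\B^k,\ell,\tau)$ is the candidate semi-model.

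The main step is to show $k\ge 1$ and that $\tau$ is hyperbolic with $\lambda_\tau=\lambda_f$. The key tool is the divergence rate. Since $f$ is hyperbolic with dilation $\lambda_f\in(0,1)$, Proposition \ref{uguale} gives $c(f)=-\log\lambda_f>0$. Lemma \ref{usare} applied to the canonical Kobayashi hyperbolic semi-model gives $c(\tau)=c(f)>0$. This immediately rules out $k=0$, since on a single point the divergence rate is zero. It also rules out the possibility that $\tau\in\Aut(\B^k)$ is elliptic, because an elliptic automorphism has an interior fixed point and hence $c(\tau)=0$ by Remark \ref{c-elliptic}. Thus $\tau$ has no fixed points in $\B^k$, and Corollary \ref{nointerior} applies, giving $\lambda_\tau=\lambda_f<1$; in particular $\tau$ is hyperbolic (not parabolic). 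This in turn means $\tau$ has exactly two boundary fixed points, one of which is the Denjoy-Wolff point $b\in\de\B^k$.

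Finally, for the boundary regularity statement, the map $\ell\in\Hol(\B^q,\B^k)$ satisfies $\ell\circ f=\tau\circ\ell$, $f$ is hyperbolic with Denjoy-Wolff point $a\in\de\B^q$, and $\tau\in\Aut(\B^k)$ is non-elliptic with Denjoy-Wolff point $b$. Therefore Proposition \ref{dove-va}(1) applies directly and gives $K\hbox{-}\lim_{z\to a}\ell(z)=b$.

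The only real obstacle is the dichotomy ruling out the elliptic and parabolic alternatives for $\tau$; this is dispatched cleanly because the divergence rate $c(f)=-\log\lambda_f$ is strictly positive in the hyperbolic regime and descends without loss through the canonical semi-model via Lemma \ref{usare}. Everything else is formal consequence of the general theorems already established in Sections \ref{step}--\ref{kob-semi} together with the Forn\ae ss--Sibony description of the base space.
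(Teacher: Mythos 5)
Your proposal is correct and follows essentially the same route as the paper: apply Theorem \ref{canonical} and Theorem \ref{aeris} to get the semi-model $(\B^k,\ell,\tau)$, use $c(\tau)=c(f)=-\log\lambda_f>0$ (Lemma \ref{usare} and Proposition \ref{uguale}) to force $k\ge 1$ and rule out the elliptic and parabolic alternatives for $\tau$, and invoke Proposition \ref{dove-va} for the $K$-limit. The only difference is that you spell out the elimination of the elliptic case via Remark \ref{c-elliptic} and route the dilation computation through Corollary \ref{nointerior}, details the paper leaves implicit.
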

\begin{proof}
By Theorem \ref{canonical}, $f$ admits a canonical Kobayashi hyperbolic semi-model and by Theorem \ref{aeris}, the base space is biholomorphic to $\B^k$ for some $0\leq k\leq q$. Note that  by Lemma \ref{usare} and by (\ref{grossa}), we  have  $c(\tau)=c(f)=-\log \lambda_f$,  hence $k\geq 1$ and $\tau$ is a hyperbolic automorphism of $\B^k$ with multiplier $\lambda_f$.
The last statement follows  from Proposition \ref{dove-va}.
\end{proof}

\begin{remark}\label{esplicito}
Let $f\in \Hol(\B^q, \B^q)$  be a  hyperbolic self-map with dilation $\lambda_f\in (0,1)$.   Suppose  $f$ admits a canonical Kobayashi hyperbolic semi-model $(\B^k, \ell, \tau)$ of hyperbolic type with dilation $\lambda_f$, for some $1\leq k\leq q$. By \cite[Proposition 2.2.10]{A} there exists a  $(k-1)\times (k-1)$ unitary matrix $U$  and a biholomorphism $\gamma\colon \B^k\to \H^k$ such that
$$\varphi(z):=\gamma\circ\tau\circ\gamma^{-1} (z)=\left(\frac{z_1}{\lambda_f}, \frac{1}{\sqrt{\lambda_f}}U(z')\right).$$
Defining $\sigma:=  \gamma\circ\ell$ we see that $(\H^k, \sigma,\varphi)$ is a canonical Kobayashi hyperbolic semi-model for $f$.
\end{remark}

\begin{corollary}\label{valiron}
Let $f\in \Hol(\B^q, \B^q)$  be a univalent hyperbolic self-map with Denjoy-Wolff point $a\in \de \B^q$ and dilation $\lambda_f\in (0,1)$.  Then there exists a
holomorphic  solution  $\Theta\colon \B^q\to\H$ to the Valiron equation
\begin{equation}\label{valironpalla}
\Theta\circ f=\frac{1}{\lambda_f}\Theta
\end{equation}  which satisfies $K\hbox{-}\lim_{z\to a}\Theta(z)=\infty$ and
\begin{equation}\label{riempiepalla}
 \bigcup_{n\geq 0} \lambda_f^n(\Theta(\B^q))=\H.
  \end{equation}
Moreover, any holomorphic solution $\Theta\colon \H^q\to \H$ of (\ref{valironpalla}) satisfies (\ref{riempiepalla}).
\end{corollary}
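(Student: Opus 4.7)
The plan is to construct $\Theta$ by projecting the canonical Kobayashi hyperbolic semi-model of $f$ to $\H$ through its Siegel realization, and to deduce the moreover assertion from a slicing argument combined with essential uniqueness of the 1D Valiron equation.

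First I would start from the canonical Kobayashi hyperbolic semi-model $(\B^k,\ell,\tau)$ produced by Theorem \ref{iper-modello} and invoke Remark \ref{esplicito} to pass to its Siegel realization. This furnishes a Cayley biholomorphism $\gamma\colon\B^k\to\H^k$ and a unitary matrix $U$ such that, setting $\sigma:=\gamma\circ\ell$ and $\varphi:=\gamma\circ\tau\circ\gamma^{-1}$, one has $\varphi(z_1,z')=(z_1/\lambda_f,\,Uz'/\sqrt{\lambda_f})$ and the triple $(\H^k,\sigma,\varphi)$ is itself a canonical Kobayashi hyperbolic semi-model for $f$. I would then define $\Theta:=\pi_1\circ\sigma$, where $\pi_1\colon\H^k\to\H$ is projection to the first coordinate; this lands in $\H$ because $(w_1,w')\in\H^k$ forces $\Im w_1>\|w'\|^2\ge 0$. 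The Valiron equation follows immediately from $\pi_1\circ\varphi=\pi_1/\lambda_f$; the $K$-limit comes from Theorem \ref{iper-modello} together with the fact that $\gamma$ sends the Denjoy-Wolff point $b\in\partial\B^k$ of $\tau$ to infinity in $\H^k$ by blowing up the first coordinate; and the identity $\bigcup_{n\geq 0}\lambda_f^n\Theta(\B^q)=\H$ follows from the semi-model property $\bigcup_n\tau^{-n}(\ell(\B^q))=\B^k$ combined with $\pi_1\circ\varphi^{-n}=\lambda_f^n\pi_1$ and $\pi_1(\H^k)=\H$.

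For the moreover assertion, I would let $\Theta\colon\B^q\to\H$ be any Valiron solution and set $W:=\bigcup_{n\geq 0}\lambda_f^n\Theta(\B^q)\subset\H$. The Valiron equation forces $\lambda_fW=W$, so $(W,\Theta,w\mapsto w/\lambda_f)$ is a Kobayashi hyperbolic semi-model for $f$; by the universal property in Theorem \ref{iper-intro} there exists a surjective holomorphic $g\colon\B^k\to W$ with $g\circ\ell=\Theta$ and $g\circ\tau(z)=g(z)/\lambda_f$. Passing to Siegel coordinates, the map $h:=g\circ\gamma^{-1}\colon\H^k\to W\subset\H$ intertwines $\varphi$ with $w\mapsto w/\lambda_f$, and since the slice $\{z'=0\}$ is $\varphi$-invariant, the restriction $h_0(z_1):=h(z_1,0)$ is a holomorphic self-map of $\H$ satisfying $h_0(\lambda_fz_1)=\lambda_fh_0(z_1)$.

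The main obstacle is the one-variable statement that any $h_0\in\Hol(\H,\H)$ with $h_0(\lambda z)=\lambda h_0(z)$, $\lambda\in(0,1)$, must be of the form $h_0(z)=cz$ with $c>0$. I would deduce this by recognizing $h_0$ as a solution of the Valiron equation for the hyperbolic automorphism $w\mapsto w/\lambda_f$ of $\H$ (equivalently, through $\gamma$, for the corresponding hyperbolic Möbius automorphism of $\D$) and invoking the classical essential uniqueness of 1D Valiron solutions \cite[Prop.~2.4]{BP}: for an automorphism the canonical Valiron solution is the identity, and every other solution is a positive real multiple of it. Once this is established, $h_0(\H)=c\H=\H$ is contained in $h(\H^k)=W\subset\H$, so $W=\H$ and \eqref{riempiepalla} holds for every Valiron solution.
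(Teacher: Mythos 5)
Your proposal is correct and follows essentially the same route as the paper: project the Siegel-form canonical Kobayashi hyperbolic semi-model to the first coordinate to produce $\Theta$, and for the ``moreover'' part use the universal property to obtain a surjection onto $W=\bigcup_n\lambda_f^n\Theta(\B^q)$, restrict to the $\varphi$-invariant slice $\{z'=0\}$, and conclude by a one-variable rigidity statement that the restriction is $z\mapsto cz$ with $c>0$. The only (immaterial) difference is that the paper derives this last rigidity from Heins' generalization of the Aumann--Carath\'eodory Starrheitssatz, whereas you invoke the essential uniqueness of one-dimensional Valiron solutions from \cite{BP}; both are valid.
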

\begin{proof}
Let $(\H^k, \ell,\tau)$ be the  canonical Kobayashi hyperbolic semi-model given by Theorem \ref{iper-modello}, with the form given in Remark \ref{esplicito}. Let $\pi_1\colon \H^k\to \H$ be the projection $\pi_1(z_1,z')=z_1.$ Then $\left(\H, \pi_1\circ \ell,x\mapsto \frac{x}{\lambda_f}\right)$ is a semi-model. Thus $\Theta:=    \pi_1\circ \ell$ solves (\ref{valironpalla}).

Conversely, suppose that $\Theta\colon \B^q\to\H$ is a holomorphic mapping which solves (\ref{valironpalla}). Then $$\left( \bigcup_{n\geq 0} \lambda_f^n(\Theta(\B^q)), \Theta, x\mapsto \frac{x}{\lambda_f}\right)$$ is a Kobayashi hyperbolic semi-model for $f$.
Since $(\H^k, \ell,\tau)$ is canonical, there exists a holomorphic surjective mapping $$\sigma\colon \H^k\to \bigcup_{n\geq 0} \lambda_f^n(\Theta(\B^q))\subset \H$$ such that $\Theta=\sigma\circ \ell$, and $\sigma\circ \tau=\frac{1}{\lambda_f}\sigma.$  Then for all $z\in\H$, $$\sigma\left(\frac{1}{\lambda_f}z,0\right)=\frac{1}{\lambda_f}\sigma(z,0),$$ which by a result of Heins \cite{heins} implies that  $\sigma(z,0)=az$ for some $a>0$, which implies that $\sigma(\H^k)=\H$.
\end{proof}

In particular, in dimension $q=2$, Theorem \ref{iper-modello} allows us to describe a model for $f$.

\begin{corollary}
Let $f\in \Hol(\B^2, \B^2)$  be  univalent, with Denjoy-Wolff point $a\in \de \B^2$ and dilation $\lambda_f\in (0,1)$. Let $(\Omega, h, \tau)$ be a model for $f$. Then, up to isomorphism of models,  one and only one of the following cases is possible:
\begin{enumerate}
  \item $\Omega=\B^2$ and $\tau$ is a hyperbolic automorphism of $\B^2$ with dilation $\lambda_f$. Moreover, if $b\in \de \B^2$ is the Denjoy-Wolff point of $\tau$, then $K\hbox{-}\lim_{z\to a}h(z)=b$.
  \item $\Omega=\H \times \C$ and $$\tau(\zeta, w)=\left(\frac{\zeta}{\lambda_f}, \delta_1(\zeta) w+\delta_2(\zeta)\right),$$ where $\delta_1, \delta_2\in\Hol(\H, \C)$ and $\delta_1(\zeta)\neq 0$ for all $\zeta\in \H$. Moreover, $K\hbox{-}\lim_{z\to a}h_1(z)=\infty$.
\end{enumerate}
\end{corollary}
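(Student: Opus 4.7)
The plan is to combine Theorem~\ref{iper-modello} with the Forn\ae ss-Sibony classification (Theorem~\ref{aeris}) applied in dimension $q=2$. Apply Theorem~\ref{iper-modello} to $f$: there exists a canonical Kobayashi hyperbolic semi-model $(\B^k,\ell,\tau_0)$ with $k\in\{1,2\}$ and $\tau_0$ a hyperbolic automorphism of $\B^k$ with dilation $\lambda_f$. By Theorem~\ref{canonical}, this semi-model is produced from the model $(\Omega,h,\tau)$ as $\ell=r\circ h$, where $r\colon\Omega\to\B^k$ is the Forn\ae ss-Sibony retraction, and the relation $\tau_0\circ r=r\circ\tau$ holds. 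By the uniqueness of the model (Corollary~\ref{unique-model}), the two cases $k=2$ and $k=1$ are mutually exclusive and exhaustive.

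If $k=2$, Theorem~\ref{aeris}(v) yields a biholomorphism $\Omega\cong\B^2$; under this identification $\tau\in\Aut(\B^2)$. By Lemma~\ref{climbhazard} together with Proposition~\ref{uguale},
\begin{equation*}
c(\tau)=c(f)=-\log\lambda_f>0.
\end{equation*}
An elliptic automorphism of $\B^2$ fixes an interior point and hence satisfies $c(\tau)=0$ by Remark~\ref{c-elliptic}; a parabolic automorphism has dilation $1$ and, by Proposition~\ref{uguale}, also satisfies $c(\tau)=0$. So $\tau$ is hyperbolic with $\lambda_\tau=\lambda_f$. The $K$-limit $K\hbox{-}\lim_{z\to a}h(z)=b$ is then immediate from Proposition~\ref{dove-va}(1) applied to the semiconjugation $h\colon\B^2\to\B^2$.

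If $k=1$, then after the Cayley transform we have a holomorphic submersion $r\colon\Omega\to\H$ of constant rank $1$. By Theorem~\ref{aeris}(iii) its fibres are simply-connected Riemann surfaces with vanishing Kobayashi pseudo-metric, hence each fibre is biholomorphic to $\C$. The central step is to upgrade this to a biholomorphism $\Omega\cong\H\times\C$ sending $r$ to the first projection $\pi_1$. Since $\Omega=\bigcup_n\Omega_n$ with $\Omega_n\cong\B^2$, the dimension-two case of the Forn\ae ss-Sibony analysis \cite{FS} identifies $\Omega$ with $\D\times\C$, which via the Cayley transform on the first factor becomes $\H\times\C$ with $r=\pi_1$. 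Once this identification is in place, the normalization $\tau_0(\zeta)=\zeta/\lambda_f$ (Remark~\ref{esplicito}) and the intertwining $r\circ\tau=\tau_0\circ r$ force
\begin{equation*}
\tau(\zeta,w)=\bigl(\zeta/\lambda_f,\,F(\zeta,w)\bigr),
\end{equation*}
for some $F\in\Hol(\H\times\C,\C)$. Since $\tau$ is biholomorphic, for every $\zeta\in\H$ the map $w\mapsto F(\zeta,w)$ is an automorphism of $\C$; writing $F(\zeta,w)=\delta_1(\zeta)w+\delta_2(\zeta)$ then gives holomorphic $\delta_1,\delta_2\in\Hol(\H,\C)$ with $\delta_1(\zeta)\neq 0$ for every $\zeta\in\H$. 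Finally, $h_1=\pi_1\circ h=\ell$, and Proposition~\ref{dove-va}(1) applied to $\ell\colon\B^2\to\H$ (whose conjugating automorphism $\tau_0$ has Denjoy-Wolff point $\infty$ on $\partial\H$) yields $K\hbox{-}\lim_{z\to a}h_1(z)=\infty$.

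The two alternatives cannot coincide because their base spaces are not biholomorphic ($\B^2$ is Kobayashi hyperbolic, while $\H\times\C$ is not), and models are unique up to isomorphism by Corollary~\ref{unique-model}. The main obstacle is precisely the identification $\Omega\cong\H\times\C$ in the case $k=1$: it rests on the dimension-two feature that a Stein surface admitting a holomorphic submersion onto $\H$ with $\C$-fibres -- arising as an increasing union of biholomorphs of $\B^2$ -- must be the trivial product, a classification phenomenon with no direct analogue in higher corank.
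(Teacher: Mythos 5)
Your proposal is correct and follows essentially the same route as the paper: apply Theorem~\ref{iper-modello} to get the canonical Kobayashi hyperbolic semi-model with $k\in\{1,2\}$, use Theorem~\ref{aeris}(v) when $k=2$, and invoke the Forn\ae ss--Sibony Main Theorem to identify $\Omega$ with $\D\times\C\cong\H\times\C$ when $k=1$. You simply spell out the details (the divergence-rate argument for hyperbolicity of $\tau$, the affine form of $\tau$ on the $\C$-fibres, and the $K$-limits via Proposition~\ref{dove-va}) that the paper's three-line proof leaves implicit.
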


\begin{proof}
Let $(\B^k, \ell, \theta)$ be a canonical Kobayashi hyperbolic semi-model for $f$. By Theorem \ref{iper-modello},  $k\geq 1$. If $k=2$ we are in the first case and the statement follows  from Theorem \ref{iper-modello}. If $k=1$, by \cite[Main Theorem]{FS} the base space $\Omega$ is  biholomorphic to $\D\times \C$, and the result follows from Theorem \ref{iper-modello}.
\end{proof}

In the case of a univalent hyperbolic linear fractional map there is a complete description of the model and of the  canonical Kobayashi hyperbolic semi-model. These also provide examples for which the base space of the canonical Kobayashi hyperbolic semi-model has dimension between $1$ and $q$.

\begin{example}[Linear fractional case]
Let $f\colon \H^q\to\H^q$ be a univalent hyperbolic linear fractional map. By \cite[Proposition 2.3]{bayart} the map $f$ is conjugated by an automorphism  $h\colon \H^q\to \mathbb{H}^q$ to the map $g\colon \mathbb{H}^q\to\mathbb{H}^q$ defined as
$$ g(z_1,z')=(\lambda z_1+b, Du, Av+c),$$ where $u=(z_2,\dots, z_p),v=(z_{p+1},\dots, z_{q})$ , $1\leq p\leq q$ and
\begin{itemize}
\item[(1)] $D$ is a diagonal matrix with coefficients of absolute value $\sqrt \lambda$,
\item[(2)] $A$ is a invertible matrix such that matrix $Q:=   \lambda I -A^*A$ is hermitian positive definite,
\item[(3)] $b$ is pure imaginary and $|c|^2+ (Q^{-1}A^*c,A^*c)\leq \Im b< \lambda-1.$
\end{itemize}
Notice that $g$ is the restriction of an automorphism of $\C^q$ to $\mathbb{H}^q$ (which we still denote by $g$). Thus, if $\Omega:=     \bigcup_{n\geq 0} g^{-n}(\mathbb H^q)$, then $(\Omega,h, g)$ is a model for $f$. By  \cite[Theorem 2.5]{bayart}, we have $$\Omega=\left\{(z_1,u,v)\in \C^q:\Im(z_1)>|u|^2-\frac{\Im b}{\lambda-1}\right\}.$$ Notice that  $\Lambda:=    \left\{(z_1,u)\in \C^{p}:\Im(z_1)>|u|^2-\frac{\Im b}{\lambda-1}\right\}  $ is biholomorphic to $\H^{p}$, and that  $\Omega =\Lambda\times \C^{q-p}$. Define $r\colon \Omega \to \Lambda$ as $r(z_1,u,v):=    (z_1,u)$ and the hyperbolic automorphism $\tau$ of $\Lambda$ as $\tau(z_1,u):=    (\lambda z_1+b, Du)$. Then $(\Lambda, r\circ h, \tau)$ is a canonical Kobayashi hyperbolic semi-model for $f$.
\end{example}

\subsection{The parabolic case}\label{sec-parabolic}

\begin{theorem}\label{dicotomia}
Let $f\in \Hol(\B^q, \B^q)$  be a univalent hyperbolic self-map with dilation $\lambda_f=1$ and Denjoy-Wolff point $a\in \de \B^q$. Then
\begin{itemize}
\item[i)] either there exists $z_0\in \B^q$ such that $s(z_0)=0$, and then the canonical Kobayashi semi-model is trivial or of elliptic type,
\item[ii)] or  $s(z)>0$ for all $z\in \B^q$, and then $f$ admits a canonical Kobayashi hyperbolic semi-model of parabolic type.
\end{itemize}
Moreover, in case ii), if $(\B^k, \ell, \tau)$ is a canonical Kobayashi hyperbolic semi-model and $b\in \de \B^k$ is the Denjoy-Wolff point of $\tau$, then $\angle_K\lim_{z\to a}\ell(z)=b$.
\end{theorem}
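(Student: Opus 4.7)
The plan is to reduce the statement to an assembly of results already established in Sections \ref{kob-semi} and \ref{sec-ball}. Since $f$ is univalent on all of $\B^q$, the whole ball $A := \B^q$ is $f$-absorbing, $f|_A$ is univalent, $\B^q$ is hyperbolic and $\B^q/\Aut(\B^q)$ is compact (in fact a point). Theorem \ref{canonical} therefore produces a canonical Kobayashi hyperbolic semi-model $(Z, \ell, \tau)$ for $f$, and Theorem \ref{aeris}(iv) identifies the base space $Z$ with $\B^k$ for some $0 \le k \le q$. My task reduces to pinning down the type of $\tau$ in each of the two alternatives and to controlling the boundary behavior of $\ell$ at $a$.

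The crucial step is Proposition \ref{puntifissi}, which translates the alternative (i)/(ii) directly into a statement about fixed points of $\tau$ in $\B^k$: case (i) is equivalent to $\tau$ admitting an interior fixed point, case (ii) to $\tau$ being fixed-point-free in $\B^k$. In case (i) this forces $\tau$ to be either elliptic (if $k\ge 1$) or the identity on a point (if $k=0$), yielding the trivial or elliptic conclusion. In case (ii) the automorphism $\tau$ is non-elliptic, and I would then rule out the hyperbolic possibility by Corollary \ref{diminuisco}: $f$ is parabolic, so $\lambda_f = 1$, and a semiconjugation onto a hyperbolic automorphism would require $\lambda_\tau < 1 = \lambda_f$, contradicting the monotonicity $\lambda_f \le \lambda_\tau$. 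Hence $\tau$ is parabolic. One also gets $k\ge 1$ for free in case (ii), since the identity (\ref{sancta}), namely $s_m(x) = k_{\B^k}(\ell(x), \tau^m(\ell(x)))$, collapses to $0$ if $k=0$, contradicting the standing hypothesis $s(z)>0$.

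For the boundary regularity in case (ii), note that $\ell$ semiconjugates the parabolic map $f$ (with Denjoy-Wolff point $a$) to the non-elliptic automorphism $\tau$ of $\B^k$ (with Denjoy-Wolff point $b$). Proposition \ref{dove-va}(2) then applies verbatim and gives $\angle_K\lim_{z\to a}\ell(z)=b$. I do not foresee any genuine obstacle: the main content is already packaged in Proposition \ref{puntifissi} (which exchanges a dynamical statement about hyperbolic steps for a statement about interior fixed points of $\tau$) and in Proposition \ref{dove-va} (the admissible-limit counterpart of the Lindel\"of theorem at the Denjoy-Wolff point). The remaining work is essentially bookkeeping, plus the small observation about the degenerate case $k=0$ via formula (\ref{sancta}).
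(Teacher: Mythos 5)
Your proposal is correct and follows essentially the same route as the paper: Proposition \ref{puntifissi} converts the dichotomy on the hyperbolic step into the presence or absence of an interior fixed point of $\tau$, the hyperbolic possibility for $\tau$ in case (ii) is excluded via the dilation comparison (the paper invokes Corollary \ref{nointerior}, i.e.\ $\lambda_\tau=\lambda_f=1$, while you use the one-sided inequality of Corollary \ref{diminuisco}, which suffices here since $\lambda_f=1$ is extremal), and the boundary statement is Proposition \ref{dove-va}(2). Your extra remark ruling out $k=0$ in case (ii) via \eqref{sancta} is a correct detail left implicit in the paper.
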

\begin{proof}
Part i)  follows from Proposition \ref{puntifissi}. So assume that $s(z)>0$ for all $z\in \B^q$ and let $(\B^k,\ell,\tau)$ be a canonical Kobayashi semi-model. The automorphism $\tau$ has no interior fixed points, thus it has to be parabolic by Corollary \ref{nointerior}. The last statement follows from Proposition \ref{dove-va}.
\end{proof}

\begin{corollary}
Let $f:\B^q\to \B^q$ be a parabolic univalent  self-map with Denjoy-Wolff point $a\in \de \B^q$. Assume there exists a point $z\in\B^q$ with such that    $\{f^k(z)\}$ is admissible at $a$. Then $s(z)=0$, and thus the canonical Kobayashi semi-model of $f$ is either  trivial or of elliptic type.
\end{corollary}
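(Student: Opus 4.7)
The plan is essentially to unwind definitions and invoke the key parabolic statement from Proposition \ref{limit at DW} together with the dichotomy in Theorem \ref{dicotomia}.

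First I would observe that, since $f$ is parabolic and hence non-elliptic with Denjoy-Wolff point $a\in \de\B^q$, by the several-variable Denjoy-Wolff theorem the orbit $\{f^k(z)\}$ converges to $a$. By hypothesis this sequence is admissible at $a$ in the sense of Definition \ref{restricted-special}.

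The core step is to apply the final assertion of Proposition \ref{limit at DW} in the parabolic case $\lambda_f=1$, which says that for any admissible sequence $\{z_k\}\subset\B^q$ with $z_k\to a$ one has $\lim_{k\to\infty}k_{\B^q}(z_k,f(z_k))=0$. Taking $z_k=f^k(z)$ gives
\begin{equation*}
\lim_{k\to\infty} k_{\B^q}\bigl(f^k(z),\,f^{k+1}(z)\bigr)=0.
\end{equation*}
But by definition of the hyperbolic step (with $m=1$), this limit is precisely $s(z)$, so $s(z)=0$. (Recall from Remark \ref{decrescesm} that the sequence $k_{\B^q}(f^k(z),f^{k+1}(z))$ is nonincreasing in $k$, so its limit coincides with $s(z)$.)

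Finally, having produced a point $z\in\B^q$ with $s(z)=0$, Theorem \ref{dicotomia} part i) immediately forces the canonical Kobayashi hyperbolic semi-model of $f$ to be either trivial or of elliptic type. No step here is delicate once Proposition \ref{limit at DW} is in hand; the only thing to double-check is that an admissible orbit is indeed an admissible sequence in the precise sense required by that proposition, which is automatic from the hypothesis together with Denjoy-Wolff.
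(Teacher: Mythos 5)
Your proof is correct and follows exactly the paper's argument: apply the parabolic case of Proposition \ref{limit at DW} to the admissible orbit $\{f^k(z)\}$ to conclude $s(z)=\lim_{k\to\infty}k_{\B^q}(f^k(z),f^{k+1}(z))=0$, then invoke Theorem \ref{dicotomia}. The extra remarks (Denjoy--Wolff convergence of the orbit, the identification of the limit with $s(z)$ via the definition of the hyperbolic $1$-step) are correct details that the paper leaves implicit.
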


\begin{proof}
If $\{z_k:=f^k(z)\}$ is admissible at $a$, by Proposition \ref{limit at DW} it follows that $s(z)=\lim_{k\to \infty}k_{\B^q}(z_k, f(z_k))=0$, and Theorem \ref{dicotomia} gives the result.
\end{proof}

\begin{example}[Linear fractional case]
Let $f\colon \H^q\to\H^q$ be a univalent parabolic linear fractional map. By \cite{bayart2}, $f$ is conjugated by an automorphism  $h\colon \H^q\to \mathbb{H}^q$ to the map $g\colon \mathbb{H}^q\to\mathbb{H}^q$ defined as
$$g(z_1,z')=(z_1+2i\langle u,a\rangle+2i\langle w,c\rangle+b,u+a,Dv,Aw),$$

where $u=(z_2,\dots, z_r), v=(z_{r+1},\dots,z_p), w=(z_{p+1},\dots, z_q)$ , $1\leq r\leq p\leq q$ and
\begin{itemize}
\item[(1)] $D$ is a diagonal matrix with coefficients of absolute value $1$,
\item[(2)] $A$ is invertible matrix such that the matrix $Q:= I-AA^*$ is hermitian positive definite,
\item[(3)] $\Im b -|a|^2\geq \langle Q^{-1}c,c\rangle.$
\end{itemize}
Notice that $g$ is the restriction of an automorphism of $\C^q$ to $\mathbb{H}^q$ (which we still denote by $g$). Thus, if $\Omega:=  \bigcup_{n\geq 0} g^{-n}(\mathbb H^q)$, then $(\Omega,h, g)$ is a model for $f$. By \cite[Theorem 3.1]{bayart2} we have a dichotomy for $\Omega$:
\begin{itemize}
\item[i)]  if $\Im b -|a|^2>0$ then $\Omega=\C^q$,
\item[ii)]  if $\Im b -|a|^2=0$ then $\Omega=\left\{(z_1,u,v,w)\in \C^q:\Im(z_1)>|u|^2+|v|^2\right\}.$
\end{itemize}
In case i) the canonical Kobayashi semi-model is trivial.
In case ii), notice that $c=0$ and $\Lambda:= \left\{(z_1,w)\in \C^{p}:\Im(z_1)>|u|^2+|v|^2\right\}  $ is biholomorphic to $\H^{p}$, and that  $\Omega =\Lambda\times \C^{q-p}$. Define $r\colon \Omega \to \Lambda$ as $r(z_1,u,v,w):= (z_1,u,v)$ and the parabolic automorphism $\tau$ of $\Lambda$ as $\tau(z_1,u,v):= (z_1+2i\langle u,a\rangle+b,u+a,Dv)$. Then $(\Lambda, r\circ h, \tau)$ is a canonical Kobayashi hyperbolic semi-model for $f$.

\end{example}

\begin{remark}
Looking at the previous example, we see that the dichotomy of Proposition \ref{dicotomia} has the following stronger form in the linear fractional case:
\begin{itemize}
\item[i)] if there exists $z\in \B^q$ such that $s(z)=0$, then the canonical Kobayashi hyperbolic semi-model is trivial,
\item[ii)] if $s(z)>0$ for all $z\in \B^q$, then the canonical Kobayashi hyperbolic semi-model is parabolic.
\end{itemize}
\end{remark}

\noindent{\bf Open Question:} we do not know whether the alternative (i) of Theorem \ref{dicotomia} can happen with a canonical Kobayashi semi-model  of elliptic type. Namely, we do not know whether there exists a univalent parabolic self-map of $\B^q$ with a (non trivial) canonical Kobayashi hyperbolic semi-model of elliptic type.
\subsection{Non-canonical Kobayashi hyperbolic semi-models}\label{semim}

Let $f\in \Hol(\B^q,\B^q)$ be univalent. Then, by the previous results, $f$ admits a canonical Kobayashi hyperbolic semi-model $(\B^k, \ell, \tau)$. By Proposition \ref{unico-semimodel} the canonical Kobayashi hyperbolic semi-model is essentially unique and in particular its  type  is univocally determined by $f$.

A more difficult question is to determine {\sl all} possible Kobayashi hyperbolic semi-models of $f$. Clearly, if the canonical Kobayashi hyperbolic semi-model of $f$ is trivial, then all Kobayashi hyperbolic semi-models of $f$ are trivial. However, in the non-trivial case, {\sl a priori}, the base space of a  semi-model for $f$ is not necessarily biholomorphic to a ball and the answer to the problem in its generality seems to be hopeless. Thus one can restrict the problem to determine any possible Kobayashi hyperbolic  semi-model  of $f$ whose base space is (biholomorphic to) a ball, and whether or how the type of such a semi-model  can be determined by $f$ and its dynamical properties. We have the following result which generalizes the corresponding results for the one-dimensional case in \cite{P, CDMV}.

\begin{proposition}\label{cara-semi}
Let $q,m\geq 1$. Let $f\in \Hol(\B^q,\B^q)$. Let $\tau\in{\sf Aut}(\B^m)$ and let $\eta\in \Hol(\B^q,\B^m)$ be such that $\eta\circ f=\tau\circ \eta$.
\begin{enumerate}
  \item If $f$ is elliptic then $\tau$ is  elliptic.
  \item If $f$ is parabolic then  $\tau$ is  either  elliptic or  parabolic.
  \item If $f$ is hyperbolic with dilation $\lambda_f$ then $\tau$ is either elliptic,  parabolic, or  hyperbolic  with dilation  $\lambda_\tau\geq \lambda_f$.
  \item If $f$ is hyperbolic with Denjoy-Wolff point $a\in \de \B^q$, dilation $\lambda_f$ and
  \begin{equation}\label{regul-eta}
\al_\eta(a):=\liminf_{z\to a} \frac{1-\|\eta(z)\|}{1-\|z\|}<+\infty,
\end{equation}
then $\tau$ is  hyperbolic  with dilation $\lambda_\tau=\lambda_f$.
\end{enumerate}
\end{proposition}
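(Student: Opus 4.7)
My plan is to handle parts (1)--(3) via short fixed-point and monotonicity arguments, and then devote the bulk of the work to (4), where I extract a boundary target point $b \in \de \B^m$ via Julia's lemma and compare the horosphere contraction rates of $f$ at $a$ and of $\tau$ at $b$. For (1), if $f(p) = p$ with $p \in \B^q$, then $\tau(\eta(p)) = \eta(f(p)) = \eta(p)$, so $\tau$ has the interior fixed point $\eta(p)$ and is elliptic. For (2) and (3), whenever $\tau$ is not elliptic both $f$ and $\tau$ have no interior fixed points, so Corollary~\ref{diminuisco} yields $\lambda_f \leq \lambda_\tau$: if $f$ is parabolic this forces $\lambda_\tau = 1$, so the non-elliptic alternative is parabolic; if $f$ is hyperbolic, this gives the lower bound $\lambda_\tau \geq \lambda_f$ in the non-elliptic subcases.

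For (4), I would first use $\al_\eta(a) < \infty$ to choose a sequence $\{z_k\} \to a$ realising the liminf; then $\|\eta(z_k)\| \to 1$ and, after passing to a subsequence, $\eta(z_k) \to b$ for some $b \in \de \B^m$. The several-variable Julia lemma (\cite[Thm.~2.2.21]{A}) then provides
\begin{equation*}
\frac{|1 - \langle \eta(z), b\rangle|^2}{1 - \|\eta(z)\|^2} \leq \al_\eta(a)\,\frac{|1 - \langle z, a\rangle|^2}{1 - \|z\|^2}, \qquad z \in \B^q,
\end{equation*}
i.e.\ $\eta(E(a,R)) \subseteq E(b, \al_\eta(a) R)$. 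Fix $z_0 \in \B^q$ and set $R_0 := \frac{|1 - \langle z_0, a\rangle|^2}{1 - \|z_0\|^2} > 0$; combining the above with (\ref{Julia}) for $f$ gives
\begin{equation*}
\tau^n(\eta(z_0)) = \eta(f^n(z_0)) \in E\bigl(b,\, \al_\eta(a) \lambda_f^n R_0\bigr),
\end{equation*}
so $\tau^n(\eta(z_0)) \to b \in \de \B^m$. This rules out an interior fixed point for $\tau$ and identifies $b$ as its Denjoy-Wolff point.

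To close (4), I would invoke the exact horosphere scaling for a non-elliptic automorphism with Denjoy-Wolff point $b$ (the normal form of Remark~\ref{esplicito} in the hyperbolic case, and the analogous translation normal form in the parabolic case): $\tau(E(b, S)) = E(b, \lambda_\tau S)$. Applied to $w := \eta(z_0)$ this yields
\begin{equation*}
\lambda_\tau^n\,\frac{|1 - \langle w, b\rangle|^2}{1 - \|w\|^2} = \frac{|1 - \langle \tau^n(w), b\rangle|^2}{1 - \|\tau^n(w)\|^2} \leq \al_\eta(a)\lambda_f^n R_0
\end{equation*}
for all $n$, so $(\lambda_\tau/\lambda_f)^n$ is bounded in $n$ and hence $\lambda_\tau \leq \lambda_f$. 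Combined with $\lambda_\tau \geq \lambda_f$ from (3), $\lambda_\tau = \lambda_f < 1$, so $\tau$ is hyperbolic with dilation $\lambda_f$. The main delicate point I anticipate is verifying that the boundary point $b$ produced by Julia's lemma coincides with the Denjoy-Wolff point of $\tau$ and does not depend on the choice of subsequence; the horosphere contraction shows that every $\tau$-orbit starting in $\eta(\B^q)$ converges to $b$, which resolves both issues at once.
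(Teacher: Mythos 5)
Your proposal is correct. Parts (1)--(3) coincide with the paper's argument (interior fixed points push forward; Corollary~\ref{diminuisco} handles the non-elliptic comparisons). For part (4), however, you take a genuinely different and more self-contained route. The paper invokes Rudin's Julia--Wolff--Carath\'eodory theorem to produce $b=\angle_K\lim_{z\to a}\eta(z)\in\de\B^m$, shows $\tau(b)=b$ by pushing admissible sequences through the intertwining relation (via Remark~\ref{ammissibile1}), and then concludes $\lambda_\tau=\lambda_f$ in one line from the chain rule for boundary dilatation coefficients, $\al_\eta(a)\,\lambda_f=\lambda_\tau\,\al_\eta(a)$. You instead use only Julia's lemma: the inclusion $\eta(E(a,R))\subseteq E(b,\al_\eta(a)R)$ combined with \eqref{Julia} traps the $\tau$-orbit of $\eta(z_0)$ in shrinking horospheres at $b$, which simultaneously shows $\tau$ is non-elliptic with Denjoy--Wolff point $b$, and the exact horosphere scaling of the automorphism then forces $(\lambda_\tau/\lambda_f)^n$ to stay bounded, giving $\lambda_\tau\le\lambda_f$; the reverse inequality comes from (3). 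What your approach buys is independence from the full JWC theorem and from the chain rule of \cite{ABr} (you only need the weaker Julia lemma and elementary horosphere geometry), at the cost of having to justify the exact scaling $\tau(E(b,S))=E(b,\lambda_\tau S)$ (which follows by applying Julia's lemma to both $\tau$ and $\tau^{-1}$ at the fixed point $b$, or from the normal form of Remark~\ref{esplicito}) and the identification of $b$ as the Denjoy--Wolff point, which you correctly resolve via the orbit convergence. Both proofs are complete; yours is arguably more elementary, the paper's is shorter given the cited machinery.
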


\begin{proof}
(1) if $f$ is     elliptic, then necessarily $\tau$ has to be elliptic, because if $z\in \B^q$ is fixed by $f$, then $\eta(z)$ is fixed by $\tau$.

(2) If $f$ is parabolic, then $\tau$ cannot be hyperbolic, by Proposition \ref{diminuisco}.

(3) It follows at once from Proposition \ref{diminuisco}.

(4) By Rudin's Julia-Wolff-Carath\'eodory's theorem (see \cite[Thm. 8.5.6]{Ru} or  \cite[Thm. 2.2.29]{A}) it follows that there exists $b\in \de \B^m$ such that $\angle_K\lim_{z\to a}\eta(z)=b$. By Remark \ref{ammissibile1}, for any admissible sequence $\{z_k\}$ converging to $a$ we have that $\{f(z_k)\}$ is an admissible sequence which converges to $a$. Hence,
if $\{z_k\}$ is an admissible sequence converging to $a$, we have that $$\tau(\eta(z_k))=\eta(f(z_k))\longrightarrow b.$$
Hence $b$ is a fixed point of $\tau$. By the chain rule for boundary dilatation coefficients (see, {\sl e.g.}, \cite{ABr}) it follows that $\al_\eta(a)\cdot \lambda_f=\lambda_\tau\cdot \al_\eta(a)$ hence the dilation of $\tau$ at $b$ is $\lambda_f$, which also implies  that $\tau$ is hyperbolic.
\end{proof}

A parabolic/hyperbolic holomorphic self-map of the ball may admit Kobayashi hyperbolic semi-models of elliptic type, as the following example shows.

\begin{example}
Let $f: \D \to \D$ be a parabolic or hyperbolic automorphism of the unit disc. Let $\Gamma$ be the cyclic group generated by $f$. Then $\D/\Gamma$ is either biholomorphic to the punctured disc $\D^\ast$  or to the annulus $A=\{\zeta\in \C: r<|\zeta|<1\}$ for some $r\in (0,1)$. Let $\rho:\D\to \D/\Gamma$ be the covering map. Let $g\in {\sf Aut}(\D)$ be a hyperbolic automorphism such that $g(\{|\zeta|\leq r\})$ is contained in $\D\cap\{\Re \zeta >0\}$. Then define $\tilde{\psi}: \D/\Gamma \to \D$ as $\tilde{\psi}(\zeta):=g(\zeta)^2$. Note that $\tilde{\psi}$ is holomorphic and surjective. Now, let $\psi:= \tilde{\psi}\circ \rho: \D\to \D$. Then $\psi$ is a holomorphic surjective map and  $\psi \circ f=\tilde{\psi}\circ \rho \circ f=\tilde{\psi}\circ \rho=\psi$. Hence, $(\D, \psi, {\sf id})$ is a Kobayashi hyperbolic semi-model for $f$ of elliptic type.
\end{example}

A univalent hyperbolic map {\sl always}  admits a Kobayashi hyperbolic semi-model of parabolic type. The following result seems to be new also for the one dimensional case. We wish to thank Pavel Gumenyuk for the construction of the map $\tilde{\psi}$ in the proof.

\begin{proposition}\label{controes}
Let $f\in \Hol(\B^q,\B^q)$ be a univalent hyperbolic self-map of $\B^q$. Then $f$ admits a Kobayashi hyperbolic semi-model of parabolic type with base space the unit disc.
\end{proposition}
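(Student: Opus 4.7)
The plan is to start with the Valiron solution $\Theta\in\Hol(\B^q,\H)$ given by Corollary \ref{valiron}, satisfying $\Theta\circ f=\Theta/\lambda_f$ and $\bigcup_{n\in\N}\lambda_f^n\,\Theta(\B^q)=\H$, and to post-compose it with a surjective holomorphic self-map $\sigma\colon\H\to\H$ solving the Abel equation $\sigma(z/\lambda_f)=\sigma(z)+1$. Setting $\theta:=\sigma\circ\Theta\in\Hol(\B^q,\H)$, a direct check gives
\[
\theta\circ f=\sigma(\Theta/\lambda_f)=\sigma(\Theta)+1=\theta+1,
\]
and iterating the Abel equation as $\sigma(\lambda_f^n z)=\sigma(z)-n$ yields
\[
\bigcup_{m\in\N}\bigl(\theta(\B^q)-m\bigr)\;=\;\sigma\Bigl(\bigcup_{n\in\N}\lambda_f^n\Theta(\B^q)\Bigr)\;=\;\sigma(\H)\;=\;\H,
\]
so $(\H,\theta,w\mapsto w+1)$ is the desired parabolic semi-model (the base $\H$ being biholomorphic to $\D$, and $w\mapsto w+1$ a parabolic automorphism). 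The problem is therefore reduced to producing such a surjective $\sigma$.

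Set $\mu:=-\log\lambda_f>0$, $T_\lambda(z):=z/\lambda_f$ and $T_+(w):=w+1$. The group $\la T_\lambda\ra$ acts freely and properly on $\H$ with quotient a hyperbolic annulus $A:=\H/\la T_\lambda\ra$ of modulus $\pi/\mu$, and analogously $\D^\ast=\H/\la T_+\ra$ via the universal covering $p(z)=e^{2\pi iz}$. A holomorphic $\sigma\colon\H\to\H$ with $\sigma\circ T_\lambda=T_+\circ\sigma$ descends to a holomorphic map $\tilde\psi\colon A\to\D^\ast$ inducing multiplication by $1$ on fundamental groups, and conversely any such $\tilde\psi$ of winding number one lifts, through the simply connected $\H$, to some $\sigma$. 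Since the image $\sigma(\H)$ is $T_+$-invariant and satisfies $p(\sigma(\H))=\tilde\psi(A)$, a standard unraveling shows $\sigma(\H)=p^{-1}(\tilde\psi(A))$, so $\sigma$ is surjective onto $\H$ if and only if $\tilde\psi$ is surjective onto $\D^\ast$. Hence the plan reduces to the following one-variable task: produce a holomorphic surjection $\tilde\psi\colon A\to\D^\ast$ of winding number one.

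This one-variable construction is the main obstacle, and corresponds precisely to the map $\tilde\psi$ for which Gumenyuk is thanked in the acknowledgement. A natural candidate has the form $\tilde\psi(w)=w\cdot G(w)$ with a single-valued, non-vanishing holomorphic $G\colon A\to\C\setminus\{0\}$ (so the winding number is $1$); the image lies in $\D^\ast$ iff $\log|w|+\log|G(w)|<0$ on $A$, and surjectivity requires that the harmonic function $\log|\tilde\psi|$ attains values both arbitrarily close to $0$ and arbitrarily close to $-\infty$. This can be arranged by letting $G$ (equivalently $e^h$ for a suitable holomorphic $h$) acquire essential singularities at points of $\partial A$ chosen so that along different boundary approaches $|\tilde\psi|$ tends to $1$ and to $0$ respectively; the value-distribution of $\tilde\psi$ near these essential singularities (via a Great Picard type argument), together with the open mapping theorem and the connectedness of $\D^\ast$, then delivers the required surjectivity. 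Once this $\tilde\psi$ is constructed, lifting to $\sigma\colon\H\to\H$ and forming $\theta:=\sigma\circ\Theta$ completes the proof.
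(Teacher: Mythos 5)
Your reduction is exactly the paper's: compose the Valiron map $\Theta$ with a surjective holomorphic $\sigma\colon\H\to\H$ satisfying $\sigma(z/\lambda_f)=\sigma(z)+1$, and obtain $\sigma$ by descending to the quotients $A=\H/\langle z\mapsto z/\lambda_f\rangle$ (an annulus) and $\D^\ast=\H/\langle w\mapsto w+1\rangle$, then lifting a surjective holomorphic $\tilde\psi\colon A\to\D^\ast$ that induces the identity on fundamental groups. That part of your argument (the functional-equation checks, the identification of the images, the lifting and the surjectivity transfer) is correct and matches the paper.

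The gap is precisely the step you defer: the actual construction of $\tilde\psi$. Your proposal $\tilde\psi(w)=w\,G(w)$ with $G$ non-vanishing and ``essential singularities at points of $\partial A$,'' justified by ``a Great Picard type argument,'' does not work as stated. The Great Picard theorem requires an isolated singularity, i.e.\ a full punctured neighbourhood on which the function is holomorphic; a point $p\in\partial A$ is approached only from inside the annulus, and on such one-sided neighbourhoods the Picard conclusion fails badly. For instance $w\mapsto e^{1/(w-1)}$ on $\D$ has an essential singularity at $1$ but maps $\D$ onto the small punctured disc $\{0<|z|<e^{-1/2}\}$: nothing forces the values to fill out $\D^\ast$, and whether they do depends entirely on the geometry of the approach region. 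Moreover you never verify the two global constraints simultaneously: that $\log|w|+\log|G(w)|<0$ everywhere on $A$ (so that the image lies in $\D^\ast$ at all) and that the image is \emph{all} of $\D^\ast$. The paper resolves exactly this crux by an explicit construction (the contribution of Gumenyuk acknowledged in the text): one maps $A$ biholomorphically onto a slit domain $A_{\epsilon,a}$ of the same modulus (a right half-plane union a thin horizontal strip in $\D$, minus a slit $[a,1]$), squares so that the image of $A$ becomes exactly $\C\setminus\bigl((-\infty,-\epsilon^2]\cup\{1\}\bigr)$, and composes with the inverse of a Riemann map $h\colon\D\to\C\setminus(-\infty,-\epsilon^2]$ normalized by $h(0)=1$, $h'(0)>0$; then $\tilde\psi:=h^{-1}(g(\cdot)^2)$ is surjective onto $\D^\ast$ with $\tilde\psi_\ast={\sf id}$ by construction. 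Without such a concrete construction (or another valid one), your proof is incomplete at its acknowledged main obstacle.
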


\begin{proof}
Let $(\H, \Theta,w\mapsto \lambda w)$, $\lambda>1$, be the   Kobayashi hyperbolic semi-model for $f$ given in Corollary \ref{valiron}.

In order to construct such a semi-model, we construct a {\sl surjective} holomorphic map $\psi:\H \to \H$ such that $\psi(\lambda w)=\psi(w)+1$. Note that if $\psi:\H \to \H$ satisfies $\psi(\lambda w)=\psi(w)+1$ then   $\psi(\lambda^m w)=\psi(w)+m$ for any $m\in \Z$ (not only for $m\in \N$). Hence, once such a map is constructed, setting $\eta:=\psi\circ \Theta$, it follows that $\eta:\B^q \to \H$ is holomorphic, $\eta(f(z))=\eta(z)+1$ and
\[
\bigcup_{n\in \N} (\eta(\B^q)-n)=\bigcup_{n\in \N}(\psi(\lambda^{-n}(\Theta(\B^q))))=\psi(\H)=\H.
\]
Therefore, $(\H, \eta, w\mapsto w+1)$ is a Kobayashi hyperbolic semi-model of parabolic type.

In order to construct $\psi$, let $\Gamma\subset {\sf Aut}(\H)$ be the cyclic subgroup generated by $w\mapsto \lambda w$, and let $\Gamma'\subset {\sf Aut}(\H)$ be the cyclic subgroup generated by $w\mapsto w+1$. Then $\H/\Gamma$ is biholomorphic to an annulus $A:=\{\zeta\in \C: r<|\zeta|<1\}$ for some $r\in (0,1)$, while $\H/\Gamma'$ is biholomorphic to the punctured disc $\D^\ast$. Let $\theta: \H\to A$  be the covering map. Assume $\tilde{\psi}\colon A\to \D^\ast$ is a surjective holomorphic map such that $\tilde{\psi}_\ast\colon\Pi_1(A)\to \Pi_1(\D^\ast)$ is the identity map.  Then $\tilde{\psi}\circ \theta$ has a lifting $\psi: \H \to \H$ such that $\psi(\lambda w)=\psi(w)+p$ for some $p\in \Z$. Since $\tilde{\psi}_\ast={\sf id}$, it follows that $p=1$, and since $\tilde{\psi}$ is surjective and $\psi(\lambda^m w)=\psi(w)+m$ for any $m\in \Z$, it is easy to see that $\psi$ has to be  surjective as well.

Therefore we are left to construct such $\tilde{\psi}$. Let $\epsilon>0$ and let $M_\epsilon:=\{\zeta\in \C: \Re \zeta>0\}\cup \{\zeta\in \D: |\Im \zeta|<\epsilon\}$. Then let $A_{\epsilon,a}:=M_\epsilon\setminus [a,1]$ for some $a<1$. If $\epsilon<<1$,  there exists $a\in (0,1)$ such that $A_{\epsilon,a}$ has the same modulus of $A$, and therefore there exists   a biholomorphic map $g: A \to A_{\epsilon,a}$. Now, let us consider $A\ni w\mapsto g(w)^2$. The image of $A$ under such a map is  given by $\C\setminus ((-\infty, -\epsilon^2]\cup \{1\})$. Let $h:\D \to \C\setminus (-\infty, -\epsilon^2]$ be the Riemann map such that $h(0)=1$, $h'(0)>0$. Define $\tilde{\psi}(w):=h^{-1}(g(w)^2)$. By construction $\tilde{\psi}: A\to \D^\ast$ is surjective, and it is easy to see that $\tilde{\psi}_\ast={\sf id}$.
\end{proof}

\subsection{A generalization to hyperbolic maps of complex manifolds}\label{sec-ipermani}

The results obtained in the case of hyperbolic self-maps of $\B^q$ can be generalized to any complex  manifold, once we define what a hyperbolic self-map is in this contest.

\begin{definition}\label{defiperbolico}
Let $X$ be a complex manifold. Let $f\in \Hol(X,X)$. We say that $f$ is {\sl hyperbolic} if $c(f)>0$.
\end{definition}

Putting together the results of the previous sections, it is easy to prove the following results.

\begin{theorem}
Let $X$ be a complex manifold of dimension $q$. Let $f\in \Hol (X,X)$ be hyperbolic. Assume there exist an $f$-absorbing  domain $A\subset X$ biholomorphic to $\B^q$ such that   $f|_A$ is univalent. Then
it admits a canonical Kobayashi hyperbolic semi-model $(\B^k,\ell,\tau)$, where $1\leq k\leq q$, and where  $\tau\in {\sf Aut}(\B^k)$ is hyperbolic with dilation $e^{-c(f)}$.
\end{theorem}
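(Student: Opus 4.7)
The plan is to reduce to the unit-ball setting via the $f$-absorbing domain $A$ and then string together the general categorical construction (Theorem \ref{tifa}), the Forn\ae ss--Sibony structure theorem (Theorem \ref{aeris}) in its ball-specific form, and the invariance of the divergence rate under the canonical Kobayashi hyperbolic semi-model (Lemma \ref{usare}).

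First, since $f|_A\colon A\to A$ is univalent and $A$ is $f$-absorbing in $X$, Theorem \ref{tifa} produces a model $(\Omega,h,\psi)$ for $f$, with $h|_A\colon A\to\Omega$ univalent. Identifying $A$ with $\B^q$, we are in the situation of Theorem \ref{canonical}: $\B^q$ is Kobayashi hyperbolic and $\B^q/{\sf Aut}(\B^q)$ is compact. Theorem \ref{canonical} then produces the canonical Kobayashi hyperbolic semi-model $(Z,\ell,\tau)$ with $\ell=r\circ h$, where $r\colon \Omega\to Z$ is the submersion from Theorem \ref{aeris}. Because $A\simeq\B^q$, item (iv) of Theorem \ref{aeris} guarantees that $Z$ is biholomorphic to $\B^k$ for some $0\le k\le q$, and $\tau$ is an automorphism of $\B^k$.

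Next I would rule out $k=0$ and pin down the type and dilation of $\tau$ simultaneously, using the divergence rate as the central invariant. By Lemma \ref{usare}, $c(\tau)=c(f)>0$ since $f$ is hyperbolic in the sense of Definition \ref{defiperbolico}. If $k$ were $0$, then $Z$ is a point, $\tau={\sf id}$, and $c(\tau)=0$, contradicting $c(f)>0$; hence $k\ge 1$. Now $\tau\in{\sf Aut}(\B^k)$ cannot be elliptic, because any elliptic automorphism fixes a point of $\B^k$ and therefore has $c(\tau)=0$ by Remark \ref{c-elliptic}. Thus $\tau$ has no interior fixed point and Proposition \ref{uguale} applies, giving
\begin{equation*}
c(\tau)=-\log\lambda_\tau.
\end{equation*}
Since $c(\tau)=c(f)>0$, we deduce $\lambda_\tau=e^{-c(f)}<1$, so $\tau$ is hyperbolic with dilation precisely $e^{-c(f)}$, as required.

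The only genuine point to check is that everything set up for $f|_A$ transfers to $f$ itself; this is already guaranteed by the construction in Theorem \ref{tifa} (the model extends from $A$ to all of $X$) and by Lemma \ref{climbhazard}, which yields $c(f|_A)=c(\psi)=c(f)$, so the divergence rate computed on $A$ is the same one used to characterize hyperbolicity on $X$. There is no real obstacle here; the content is entirely in recognizing that Lemma \ref{usare} combined with the trichotomy elliptic/parabolic/hyperbolic for $\Aut(\B^k)$, together with $c(\tau)=-\log\lambda_\tau$ from Proposition \ref{uguale}, forces $\tau$ to be hyperbolic with the prescribed dilation.
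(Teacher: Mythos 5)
Your proposal is correct and is essentially the proof the paper intends: the theorem is stated without proof ("putting together the results of the previous sections"), and the analogous ball case, Theorem \ref{iper-modello}, is proved by exactly the same chain — Theorem \ref{tifa} and Theorem \ref{canonical} to get the canonical Kobayashi hyperbolic semi-model with base $\B^k$ via Theorem \ref{aeris}(iv), then Lemma \ref{usare} together with $c(\tau)=-\log\lambda_\tau$ from Proposition \ref{uguale} to force $k\geq 1$ and $\tau$ hyperbolic with dilation $e^{-c(f)}$. Your explicit exclusion of the elliptic case for $\tau$ via Remark \ref{c-elliptic} is a worthwhile detail the paper leaves implicit.
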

\begin{theorem}
Let $X$ be a complex manifold of dimension $q$. Let $f\in \Hol (X,X)$ be hyperbolic. Assume there exist an $f$-absorbing  domain $A\subset X$ biholomorphic to $\B^q$ such that   $f|_A$ is univalent.  Then:
\begin{enumerate}
  \item there exists $\sigma: X\to \H$ holomorphic such that
  \begin{equation}\label{Valiron}
  \sigma \circ f=e^{c(f)}\sigma
  \end{equation} Moreover, $\bigcup_{n\in \N}e^{-n c(f)}\sigma(X)=\H$.
  \item there exists $\theta: X \to \H$ holomorphic such that
  \begin{equation}\label{Abel}
  \theta \circ f=\theta+1.
  \end{equation}
   Moreover, $\bigcup_{n\in \N}(\sigma(X)-n)=\H$.
\end{enumerate}
\end{theorem}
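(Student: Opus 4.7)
The plan is to apply the preceding theorem in Section \ref{sec-ipermani} to obtain a canonical Kobayashi hyperbolic semi-model $(\B^k,\ell,\tau)$ for $f$, where $\tau\in{\sf Aut}(\B^k)$ is hyperbolic with dilation $\lambda_\tau=e^{-c(f)}$, and then derive both functional equations from this single object by projecting to $\H$ and, for the Abel equation, post-composing with a suitable auxiliary map on $\H$.

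For part (1), I would mimic the proof of Corollary \ref{valiron}. By Remark \ref{esplicito}, one may replace $(\B^k,\ell,\tau)$ by an isomorphic semi-model $(\H^k,\gamma\circ\ell,\varphi)$, with $\gamma\colon\B^k\to\H^k$ a Cayley-type biholomorphism and $\varphi(z_1,z')=(z_1/\lambda_\tau,\,U(z')/\sqrt{\lambda_\tau})$ for some unitary $U$. Setting $\sigma:=\pi_1\circ\gamma\circ\ell\colon X\to\H$ with $\pi_1(z_1,z'):=z_1$, the intertwining relation $\ell\circ f=\tau\circ\ell$ immediately gives $\sigma\circ f=e^{c(f)}\sigma$. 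For the filling property, note that $\pi_1\circ\varphi^{-m}=e^{-mc(f)}\pi_1$, because $\varphi$ respects the splitting $\C\times\C^{k-1}$ and acts by multiplication by $1/\lambda_\tau$ on the first factor; applying $\pi_1$ to the identity $\bigcup_{m\geq 0}\varphi^{-m}(\gamma(\ell(X)))=\H^k$ coming from the definition of a semi-model then yields $\bigcup_{n\in\N}e^{-nc(f)}\sigma(X)=\H$.

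For part (2), I would reuse the one-variable construction in the proof of Proposition \ref{controes}. Set $\lambda:=e^{c(f)}>1$; that argument produces a surjective holomorphic $\psi\colon\H\to\H$ satisfying $\psi(\lambda w)=\psi(w)+1$ for every $w\in\H$, obtained by lifting a surjective map $\tilde\psi\colon\H/\langle w\mapsto\lambda w\rangle\to\H/\langle w\mapsto w+1\rangle$ inducing the identity on fundamental groups, where the two quotients are identified with an annulus and a punctured disc respectively. Taking $\sigma$ from part (1) and defining $\theta:=\psi\circ\sigma\colon X\to\H$, one computes
\[
\theta\circ f=\psi(\lambda\sigma)=\psi\circ\sigma+1=\theta+1,
\]
and from $\psi(\lambda^{-n}w)=\psi(w)-n$ together with $\bigcup_{n\in\N}\lambda^{-n}\sigma(X)=\H$ and surjectivity of $\psi$ one obtains
\[
\bigcup_{n\in\N}(\theta(X)-n)=\psi\!\left(\bigcup_{n\in\N}\lambda^{-n}\sigma(X)\right)=\psi(\H)=\H.
\]

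The genuine work is already absorbed into the two ingredients being invoked: the existence of the canonical Kobayashi hyperbolic semi-model with the correct dilation $e^{-c(f)}$, which rests on the whole divergence-rate machinery of Sections \ref{step}--\ref{kob-semi} and on the identity $c(f)=c(\tau)$ that was the central technical point of the paper; and the covering-space construction of $\psi$ underlying Proposition \ref{controes}. Once both are granted, the present theorem reduces to projection and composition and no further analytic estimate is required. The only point that deserves a moment of attention is that $c(f)>0$ forces $\tau$ to be hyperbolic (so $\lambda>1$ and part (1) is not vacuous), which is exactly the content of the preceding theorem in Section \ref{sec-ipermani}.
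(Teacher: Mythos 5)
Your proposal is correct and follows exactly the route the paper intends: the theorem is stated without proof as an assembly of the preceding results, and you assemble it in the natural way, projecting the canonical Kobayashi hyperbolic semi-model $(\B^k,\ell,\tau)$ (with $c(\tau)=c(f)$) to $\H$ as in Corollary \ref{valiron} for the Valiron equation, and post-composing with the covering-space map $\psi$ of Proposition \ref{controes} for the Abel equation. The only remark worth adding is that the filling property in part (2) should read $\bigcup_{n\in\N}(\theta(X)-n)=\H$ (the $\sigma$ in the statement is a typo), which is what you prove.
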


\section{Semigroups}\label{sec-semigroup}

In this section we apply our results to semigroups of holomorphic self-maps. For details about semigroups we refer the reader to \cite{A} and \cite{RS}.

\begin{definition}
Let $X$ be a complex manifold.
A {\sl (one-parameter) semigroup} $(\phi_t)$ of holomorphic self-maps of $X$ is a continuous homomorphism from the additive
semigroup $\R^+$ of non-negative real numbers into $\Hol(X,X)$  endowed with the
compact-open topology.
\end{definition}
\begin{remark}
If $(\phi_t)$ is a  semigroup of holomorphic self-maps in $X$, then $\phi_t: X\to X$ is a univalent mapping for all $t\geq 0$ (see, {\sl e.g.}, \cite[Proposition 2.5.18]{A}).
\end{remark}

We need the following continuous-time version of the Fekete lemma (see for example \cite[Theorem 16.2.9]{kuczma}). 
\begin{theorem}\label{pippo}
Let $f\colon \R^+\to \R$ be a measurable subadditive function. Then $\lim_{t\to \infty} \frac{1}{t}f(t)$ exists and 
$$\lim_{t\to \infty} \frac{f(t)}{t}=\inf_{t\in\R^+} \frac{f(t)}{t}.$$
\end{theorem}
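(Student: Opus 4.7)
The plan is to extend the classical Fekete argument, using measurability only to control the remainder term that arises in the continuous setting. Setting $L := \inf_{s > 0} f(s)/s \in [-\infty, \infty)$, the inequality $\liminf_{t \to \infty} f(t)/t \geq L$ is immediate since $f(t)/t \geq L$ for every $t > 0$, so the substance of the proof is showing $\limsup_{t\to\infty} f(t)/t \leq L$.

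First I would establish that a measurable subadditive $f$ is bounded above on every compact subinterval of $(T_0, \infty)$ for some $T_0 > 0$. The level sets $A_n := \{x > 0 : f(x) \leq n\}$ are measurable and cover $(0, \infty)$, so some $A_n$ has positive Lebesgue measure inside a bounded interval; by the Steinhaus sumset theorem $A_n + A_n$ contains an open interval $(c, d)$, on which subadditivity forces $f \leq 2n$. Iterating (the sum of two intervals on which $f$ is bounded above is itself such an interval, by subadditivity) produces the bound $f \leq 2kn$ on $(kc, kd)$ for every $k \in \N$; once $k$ is large enough these intervals overlap, yielding local boundedness above on a half-line $(T_0, \infty)$.

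Next, I would mimic the Fekete trick. Given $\epsilon > 0$, pick $s_0 > 0$ with $f(s_0)/s_0 < L + \epsilon$ (for the case $L = -\infty$, replace $L + \epsilon$ by an arbitrary negative constant throughout). Since $f(k s_0) \leq k f(s_0)$ by iterated subadditivity, the same ratio bound holds at $s := k s_0$ for every $k \in \N$, so we may assume $s > T_0$ is large enough that $[s, 2s]$ lies in the region where $f$ has an upper bound $C$. For $t \geq 2s$ write $t = ns + r$ with $n \in \N$ and $r \in [s, 2s)$; subadditivity yields
\[
\frac{f(t)}{t} \;\leq\; \frac{n f(s)}{t} + \frac{f(r)}{t} \;\leq\; \frac{ns}{t}\cdot\frac{f(s)}{s} + \frac{C}{t}.
\]
Since $ns/t \to 1$ and $C/t \to 0$ as $t \to \infty$, this gives $\limsup_{t\to\infty} f(t)/t \leq f(s)/s < L + \epsilon$. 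Letting $\epsilon \to 0$ concludes the argument.

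The main obstacle is the local boundedness step: in the discrete Fekete lemma the remainder is trivially controlled, whereas in the continuous case $f$ may have no a priori bound over the fractional-part interval. Measurability is essential exactly here, since Hamel-basis constructions produce additive (hence subadditive) functions $\R \to \R$ that are unbounded on every interval, ruling out any purely algebraic workaround.
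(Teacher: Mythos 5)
Your proof is correct. Note that the paper does not actually prove this statement: it is quoted as a known result with a citation to Kuczma's book, so there is no in-paper argument to compare against. What you give is the standard complete proof of that cited result, and you correctly isolate the one genuinely delicate point of the continuous-time Fekete lemma, namely that the remainder term $f(r)$ over $r$ in a fixed compact interval must be controlled; your use of the Steinhaus sumset theorem to get boundedness above of a measurable subadditive function on compact subintervals of a half-line $(T_0,\infty)$, followed by the usual division $t=ns+r$ with $r\in[s,2s)$, is exactly the textbook route, and the degenerate case $\inf_{s>0}f(s)/s=-\infty$ is handled properly.
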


\begin{lemma}\label{coeff-semi}
Let $X$ be a complex manifold. Let $(\phi_t)$ be a  semigroup of holomorphic self-maps on $X$. 
Then for all $x\in X$ we have that $\lim_{t\to\infty} \frac{k_X(x,\phi_t(x))}{t}$ exists, and satisfies
$$c(\phi_1)=\lim_{t\to\infty} \frac{k_X(x,\phi_t(x))}{t}=\inf_{t\in \R^+}\frac{k_X(x,\phi_t(x))}{t}.$$
Moreover for all $t\geq 0$ we have $c(\phi_t)=t c(\phi_1)$.
\end{lemma}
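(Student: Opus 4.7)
Fix $x\in X$ and define $F\colon \R^+\to \R$ by $F(t):=k_X(x,\phi_t(x))$. The plan is to apply Theorem \ref{pippo} to $F$, and then match the resulting limit with $c(\phi_1)$ via the discrete definition of the divergence rate.

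The first step is subadditivity. For all $s,t\geq 0$, using the semigroup property $\phi_{s+t}=\phi_s\circ \phi_t$ and the fact that holomorphic self-maps are Kobayashi contractions,
\begin{equation*}
F(s+t)=k_X(x,\phi_s(\phi_t(x)))\leq k_X(x,\phi_s(x))+k_X(\phi_s(x),\phi_s(\phi_t(x)))\leq F(s)+F(t).
\end{equation*}
The second step is measurability: since $t\mapsto \phi_t$ is continuous into $\Hol(X,X)$ (compact-open topology), the evaluation $t\mapsto \phi_t(x)$ is continuous into $X$; the Kobayashi pseudo-distance $k_X$ is upper semi-continuous, so $F$ is upper semi-continuous and in particular Borel measurable. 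Theorem \ref{pippo} then gives
\begin{equation*}
\lim_{t\to\infty}\frac{F(t)}{t}=\inf_{t\in\R^+}\frac{F(t)}{t}.
\end{equation*}

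To identify this limit with $c(\phi_1)$, observe that $\phi_1^m=\phi_m$ for all $m\in \N$, hence by Definition \ref{divrate},
\begin{equation*}
c(\phi_1)=\lim_{m\to\infty}\frac{k_X(x,\phi_1^m(x))}{m}=\lim_{m\to\infty}\frac{F(m)}{m},
\end{equation*}
and this restriction to integer values agrees with the full real limit $\lim_{t\to\infty}F(t)/t$ just established. This proves the displayed equality of the statement.

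Finally, for the scaling property $c(\phi_t)=t\,c(\phi_1)$, if $t=0$ both sides vanish since $\phi_0={\sf id}$. If $t>0$, then $\phi_t^m=\phi_{tm}$ for all $m\in \N$, so
\begin{equation*}
c(\phi_t)=\lim_{m\to\infty}\frac{k_X(x,\phi_{tm}(x))}{m}=t\lim_{m\to\infty}\frac{F(tm)}{tm}=t\,c(\phi_1),
\end{equation*}
where the last equality uses that $tm\to\infty$ and the limit of $F(s)/s$ exists as $s\to\infty$ (so any subsequence has the same limit). The only subtle point in the whole argument is securing measurability of $F$ to apply the continuous Fekete lemma, which is why the upper semi-continuity of $k_X$ is invoked.
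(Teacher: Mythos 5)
Your proposal is correct and follows essentially the same route as the paper: establish subadditivity of $t\mapsto k_X(x,\phi_t(x))$ via the semigroup property and the contraction property of $k_X$, check measurability, apply Theorem \ref{pippo}, and then identify the limit with $c(\phi_1)$ and deduce the scaling. The only cosmetic difference is that the paper simply notes the function is continuous (the Kobayashi pseudo-distance is in fact continuous, not merely upper semi-continuous), but your weaker observation already suffices for measurability.
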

\begin{proof}
The function $\R^+\ni t\mapsto k_X(x,\phi_t(x))$ is continuous and thus measurable. Moreover
\[
k_X(x,\phi_{t+s}(x))\leq k_X(x,\phi_t(x))+ k_X(\phi_t(x),\phi_{t+s}(x))\leq  k_X(x,\phi_t(x))+  k_X(x,\phi_s(x)),
\]
hence Theorem \ref{pippo} yields the result.
\end{proof}

\begin{definition}
Let $X$ be a complex manifold and let $(\phi_t)$ be a  semigroup of holomorphic self-maps of $X$. The semigroup is called {\sl hyperbolic} if $c(\phi_1)>0$.
\end{definition}

\begin{remark}
By Lemma \ref{coeff-semi}, a semigroup $(\phi_t)$ is hyperbolic if and only if for all $t>0$ the map $\phi_t$ is hyperbolic.
\end{remark}

For semigroups, the definition of models is as follows.

\begin{definition}
Let $X$ be a complex manifold. Let $(\phi_t)$ be a  semigroup of holomorphic self-maps of $X$. A {\sl semi-model} for $(\phi_t)$ is a triple  $(\Omega,h,\psi_t)$  where
$\Omega$ is a complex manifold, $h\in \Hol (X,\Omega)$ is a holomorphic mapping, $(\psi_t)$ is a  group of automorphisms of $X$ such that
\begin{equation}\label{unosemigroup}
h\circ \phi_t=\psi_t\circ h,\quad t\geq 0
\end{equation}
and
\begin{equation}\label{duesemigroup}
\bigcup_{t\geq 0} \psi_{-t}(h(X))=\Omega.
\end{equation}

We call the manifold $\Omega$ the {\sl base space} and the mapping $h$ the {\sl intertwining mapping}. If $h\in\Hol(X,\Omega)$ is univalent we call the triple $(\Omega,h,\psi_t)$ a {\sl model} for $(\phi_t)$.

 Let $(\Omega,h,\psi_t)$ and $(\Lambda, k,\varphi_t)$ be two semi-models for $(\phi_t)$. A {\sl morphism of models} $\hat\eta\colon(\Omega,h,\psi_t)\to(\Lambda, k,\varphi_t)$ is given by
 $\eta\in\Hol(\Omega,\Lambda)$ such that
$$ \eta\circ h=k,$$ and $$\phi_t\circ \eta=\eta\circ \psi_t,\quad t\geq 0.$$ An {\sl isomorphism of models} is a morphism which admits an inverse.
\end{definition}

The following result is analogous to Theorem \ref{tifa} and Proposition \ref{uniq} and we omit the proof.

\begin{proposition}
Let $X$ be a complex manifold and let $(\phi_t)$ be a  semigroup of holomorphic self-maps of $X$. Then there exists a model $(\Omega,h,\psi_t)$ for $(\phi_t)$.
The model $(\Omega,h,\psi_t)$ satisfies the following universal property: if $(\Lambda, h, \varphi_t)$ is a semi-model for $(\phi_t)$, then there exists a morphism of models $\hat\eta\colon (\Omega,h,\psi_t)\to (\Lambda, h, \varphi_t)$.
\end{proposition}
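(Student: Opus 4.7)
The plan is to reduce the continuous-time case to the already-established discrete-time result by applying Theorem \ref{tifa} to the single univalent map $\phi_1$ and then lifting the action of the whole semigroup to the base space using the universal property. Since each $\phi_t$ is univalent, $\phi_1$ is in particular univalent on the absorbing domain $A=X$, so Theorem \ref{tifa} produces a model $(\Omega,h,\psi)$ of $\phi_1$, with $h\colon X\to\Omega$ univalent and $\psi\in{\sf Aut}(\Omega)$ satisfying $\psi\circ h=h\circ\phi_1$.

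Fix $t\geq 0$ and consider the triple $(\Omega,h\circ\phi_t,\psi)$. I claim it is again a model of $\phi_1$. The intertwining $(h\circ\phi_t)\circ\phi_1=\psi\circ(h\circ\phi_t)$ holds because $\phi_t$ and $\phi_1$ commute in the semigroup; the map $h\circ\phi_t$ is univalent; and for the absorbing condition $\bigcup_{n\in\N}\psi^{-n}(h(\phi_t(X)))=\Omega$, given $y\in\Omega$ pick $n$ with $\psi^n(y)=h(x)$, then for any integer $k\geq t$ one has $\psi^{n+k}(y)=h(\phi_k(x))=h(\phi_t(\phi_{k-t}(x)))\in h(\phi_t(X))$. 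By Corollary \ref{unique-model}, the two models of $\phi_1$ are isomorphic, and the unique isomorphism is an automorphism $\psi_t\in{\sf Aut}(\Omega)$ commuting with $\psi$ and satisfying $\psi_t\circ h=h\circ\phi_t$.

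Next I verify that $(\psi_t)_{t\geq 0}$ is a continuous one-parameter semigroup. For the semigroup law, both $\psi_{t+s}$ and $\psi_t\circ\psi_s$ commute with $\psi$ and intertwine $h$ with $h\circ\phi_{t+s}$; hence they agree on $h(X)$, and since both commute with every $\psi^{-n}$, they agree on all of $\Omega=\bigcup_n\psi^{-n}(h(X))$. By the same argument $\psi_0={\sf id}$ and $\psi_1=\psi$. Setting $\psi_{-t}:=\psi_t^{-1}$ extends $(\psi_t)$ to a group; the absorbing condition $\bigcup_{t\geq 0}\psi_{-t}(h(X))=\Omega$ holds because $\psi_{-n}=\psi^{-n}$ already gives it. Continuity in $t$ is proved first on $h(X)$, where $\psi_t(h(x))=h(\phi_t(x))$ depends continuously on $t$ by continuity of the original semigroup, and then propagated to $\Omega$ via the relation $\psi_t\circ\psi^{-n}=\psi^{-n}\circ\psi_t$.

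For the universal property, given a semi-model $(\Lambda,k,\varphi_t)$ of $(\phi_t)$, observe that $(\Lambda,k,\varphi_1)$ is a semi-model for the single map $\phi_1$; Proposition \ref{uniq} then yields a morphism of discrete-time models $\hat\eta\colon(\Omega,h,\psi)\to(\Lambda,k,\varphi_1)$. To see that the same $\eta$ intertwines $\psi_t$ with $\varphi_t$ for every $t$, compute
\[
\eta\circ\psi_t\circ h=\eta\circ h\circ\phi_t=k\circ\phi_t=\varphi_t\circ k=\varphi_t\circ\eta\circ h,
\]
so $\eta\circ\psi_t$ and $\varphi_t\circ\eta$ agree on $h(X)$; since both commute with $\psi^{-n}$ on the left and $\varphi_1^{-n}$ on the right, they agree on $\Omega$. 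The main obstacle is not conceptual—the whole argument reduces to careful application of the uniqueness results already established in the discrete setting—but rather the bookkeeping required to check that the absorbing property, the semigroup law, and the continuity in $t$ all transfer faithfully from $\phi_1$ to the whole family $(\phi_t)$.
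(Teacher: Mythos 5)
Your argument is correct. Note that the paper actually omits the proof of this proposition, remarking only that it is ``analogous to Theorem \ref{tifa} and Proposition \ref{uniq}'', i.e.\ the intended route is to rerun the abstract-basin construction and the morphism construction directly in continuous time. You instead reduce everything to the time-one map: you take the discrete model of $\phi_1$ from Theorem \ref{tifa} (with $A=X$), observe that $(\Omega,h\circ\phi_t,\psi)$ is again a model of $\phi_1$, and invoke the uniqueness machinery (Corollary \ref{unique-model}, Lemma \ref{two-one}) to manufacture each $\psi_t$ and to force the group law $\psi_{t+s}=\psi_t\circ\psi_s$, $\psi_0={\sf id}$, $\psi_1=\psi$. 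This is a genuinely different and arguably cleaner decomposition: it avoids redoing the quotient construction with real time and makes the semigroup law come for free from rigidity of morphisms, at the cost of having to transfer the absorbing property and continuity from $\phi_1$ to the family, which you do. Two small points you should make explicit: (i) in the universal property you assert that $(\Lambda,k,\varphi_1)$ is a semi-model for $\phi_1$, which requires checking that $\bigcup_{t\geq 0}\varphi_{-t}(k(X))=\Lambda$ implies $\bigcup_{n\in\N}\varphi_1^{-n}(k(X))=\Lambda$; this holds because if $\varphi_t(y)=k(x)$ then $\varphi_{\lceil t\rceil}(y)=\varphi_{\lceil t\rceil-t}(k(x))=k(\phi_{\lceil t\rceil-t}(x))\in k(X)$, the same trick you already use for the absorbing condition of $h\circ\phi_t$; (ii) for continuity of $(\psi_t)$ in the compact-open topology one should note that every compact subset of $\Omega$ lies in some $\Omega_n=\psi^{-n}(h(X))$, on which $\psi_t=\psi^{-n}\circ h\circ\phi_t\circ h^{-1}\circ\psi^{n}$, so joint continuity follows from that of $(t,x)\mapsto\phi_t(x)$. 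With these details spelled out the proof is complete.
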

\begin{remark}
Let $f\in \Hol(X,X)$ be univalent. Then by Theorem \ref{tifa} it admits a model $(\Omega,h,\psi)$. By the uniqueness  up to isomorphisms of models and the previous proposition, it follows that $f$ is embeddable in a semigroup of holomorphic self-maps of $X$ if and only if $\psi$ belongs to a group of automorphisms of $\Omega$.
\end{remark}

The results in Section \ref{kob-semi} can be easily adapted to semigroups. We state here without proof the result on the existence of the canonical Kobayashi hyperbolic semi-model for semigroups in case $X/{\sf Aut}(X)$ is compact (similar results hold in case the semigroup has an absorbing open subset $A\subseteq X$  such that $A/{\sf Aut}(A)$ is compact):

\begin{theorem}\label{canonicalsemigroup}
Let $X$ be a complex manifold such that $X/{\sf Aut}(X)$ is compact. Let $(\phi_t)$ be a  semigroup of holomorphic self-maps of $X$. Let  $(\Omega,h,\psi_t)$ be a model for $(\phi_t)$. Let $r\colon A\to Z$ be the map defined in Theorem \ref{aeris} and let $\mathcal F$ be the non-singular holomorphic foliation on $\Omega$ induced by $r\colon \Omega\to Z$. Then $(\psi_t)$ preserves the foliation $\mathcal{F}$ of $\Omega$, and thus   induces a group of  automorphisms $(\tau_t\colon Z\to Z)$ such that
\begin{equation}
\tau_t\circ r=r\circ \psi_t, \quad t\geq 0.
\end{equation}
The triple $(Z, r\circ h, \tau_t)$ is a canonical Kobayashi hyperbolic semi-model for each $\phi_t$, $t\geq 0$, and it is called the {\sl canonical Kobayashi hyperbolic semi-model for  $(\phi_t)$}.
\end{theorem}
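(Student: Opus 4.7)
The approach is to reduce this continuous-parameter statement to Theorem \ref{canonical} applied to each individual $\phi_t$, exploiting the fact that the map $r\colon\Omega\to Z$ and the foliation $\mathcal F$ produced by Theorem \ref{aeris} depend only on $\Omega$, and hence not on $t$. Since $X/\Aut(X)$ is compact, the whole manifold $X$ plays the role of the $f$-absorbing domain $A$ in Theorem \ref{canonical} for every $\phi_t$: we have $\phi_t(X)\subset X$ trivially, every $\phi_t$ is univalent on $X$ (being an element of a one-parameter semigroup of holomorphic self-maps), and $(\Omega,h,\psi_t)$ is a model for $\phi_t$ by hypothesis. Thus, for each fixed $t\geq 0$, Theorem \ref{canonical} yields that $\psi_t$ preserves $\mathcal F$, that there is an induced $\tau_t\in\Aut(Z)$ with $\tau_t\circ r=r\circ\psi_t$, and that $(Z,r\circ h,\tau_t)$ is a canonical Kobayashi hyperbolic semi-model for $\phi_t$. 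This already covers the part of the statement concerning each individual $t$.

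Next I would check that the family $(\tau_t)$ inherits the group structure from $(\psi_t)$. Using the surjectivity $r(\Omega)=Z$ from Theorem \ref{aeris}, the chain
$$\tau_{s+t}\circ r=r\circ\psi_{s+t}=r\circ\psi_s\circ\psi_t=\tau_s\circ r\circ\psi_t=\tau_s\circ\tau_t\circ r$$
forces $\tau_{s+t}=\tau_s\circ\tau_t$; analogously $\tau_0={\sf id}$ and $\tau_{-t}=\tau_t^{-1}$, so $(\tau_t)_{t\in\R}$ is a one-parameter group of automorphisms of $Z$. For continuity of $t\mapsto\tau_t$, needed so that $(\tau_t)$ is a genuine semigroup in the sense used earlier in the paper, I would argue locally: around any $z_0\in Z$, the implicit function theorem used in the proof of Theorem \ref{canonical} supplies a holomorphic section $\gamma\colon U\to\Omega$ of $r$, whence $\tau_t|_U=r\circ\psi_t\circ\gamma$ transfers continuity of $t\mapsto\psi_t$ to continuity of $t\mapsto\tau_t$.

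The main obstacle I foresee is not the single-$t$ content, which is handed to us by Theorem \ref{canonical}, but ensuring that the resulting $(\tau_t)$ is a bona fide \emph{continuous} one-parameter group of automorphisms of $Z$. The algebraic group law comes for free from $(\psi_t)$, whereas continuity requires the separate observation that the group $(\psi_t)$ appearing in a semi-model of a one-parameter semigroup is itself continuous in $t$; this can be traced back through the abstract-basin-of-attraction construction in Theorem \ref{tifa} (performed simultaneously for every $t$) to the continuity of $t\mapsto\phi_t$ in the compact-open topology on $\Hol(X,X)$. Once these technical points are settled, the conclusion is essentially a word-for-word transcription of Theorem \ref{canonical} to the continuous-time setting.
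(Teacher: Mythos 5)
Your proposal is correct and is essentially the argument the paper intends: the authors state this theorem without proof, explicitly describing it as an easy adaptation of Theorem \ref{canonical}, and your reduction to the single-map case for each fixed $t$, followed by the verification of the group law via surjectivity of $r$ and of continuity via local sections of $r$, is exactly that adaptation. The only step you assert rather than check is that the semigroup-model condition $\bigcup_{t\geq 0}\psi_{-t}(h(X))=\Omega$ implies the single-map condition $\bigcup_{n\geq 0}\psi_{-nt_0}(h(X))=\Omega$ for each fixed $t_0>0$, but this follows in one line since $\psi_{s}(h(X))\subset h(X)$ for $s\geq 0$.
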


\begin{proposition}\label{invariante-fol}
Let $X$ be a complex manifold of dimension $q$ such that $X/{\sf Aut}(X)$ is compact. Let $(\phi_t)$ be a semigroup  of holomorphic self-maps of $X$, and assume  $(Z,\ell,\tau_t)$ is a canonical Kobayashi semi-model for $(\phi_t)$.  Set $k:= {\rm dim}\ Z$, and  assume there exists  $z\in Z$ such that $\tau(z)=z$.
Then  $\ell^{-1}(z)$ is  a  connected  complex submanifold  of $X$   of dimension $q-k$  which is invariant by $(\phi_t)$.
\end{proposition}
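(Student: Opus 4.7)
The plan is to work inside an ambient model $(\Omega, h, \psi_t)$ for $(\phi_t)$, which by the semigroup version of Theorem \ref{canonicalsemigroup} realises the canonical Kobayashi hyperbolic semi-model as $(Z, r \circ h, \tau_t)$, with $r : \Omega \to Z$ the Forn\ae ss--Sibony submersion of Theorem \ref{aeris}. Setting $F := r^{-1}(z)$, I would reduce everything to the study of $F \cap h(X)$ as a subset of the connected cell $F$, via the identification $\ell^{-1}(z) = h^{-1}(F \cap h(X))$.

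The submanifold structure and dimension count are formal. Each $\phi_t$ is univalent, and as in the construction of Theorem \ref{tifa} adapted to semigroups, $h : X \to h(X) \subset \Omega$ is a biholomorphism onto its image, hence an open embedding. Combined with $r$ being a holomorphic submersion of constant rank $k$ by Theorem \ref{aeris}(i), the composition $\ell = r \circ h$ is itself a submersion of rank $k$, so every non-empty fiber $\ell^{-1}(z)$ is a complex submanifold of $X$ of dimension $q-k$. The $\phi_t$-invariance $\phi_t(\ell^{-1}(z)) \subset \ell^{-1}(z)$ is immediate from $\ell \circ \phi_t = \tau_t \circ \ell$ and the hypothesis $\tau_t(z) = z$.

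The only non-formal point, and the main obstacle, is connectedness of $F \cap h(X)$. Since $\tau_t(z) = z$ for all $t$, the set $F$ is $\psi_t$-invariant for every $t \in \R$. From $\phi_t(X) \subset X$ and the intertwining relation one gets $\psi_t(h(X)) \subset h(X)$ for $t \geq 0$, and together with property \eqref{duesemigroup} this yields $F = \bigcup_{t \geq 0} \psi_{-t}(F \cap h(X))$. Let $\{C_i\}_{i \in I}$ be the connected components of the open subset $F \cap h(X)$ of the locally connected space $F$. The key observation is that for any $p \in C_i$ the continuous curve $t \mapsto \psi_t(p)$, $t \geq 0$, stays in $F \cap h(X)$ by the two invariance properties above and is connected, so it remains in the single component $C_i$; this gives $\psi_t(C_i) \subseteq C_i$ for every $t \geq 0$. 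Consequently $D_i := \bigcup_{t \geq 0} \psi_{-t}(C_i)$ is an increasing union of connected open sets, hence connected, and a short check shows the $D_i$ are pairwise disjoint: if $\psi_{-t}(C_i) \cap \psi_{-s}(C_j) \neq \varnothing$ with $0 \leq t \leq s$, then applying $\psi_s$ gives $\psi_{s-t}(C_i) \cap C_j \neq \varnothing$, and since $\psi_{s-t}(C_i) \subseteq C_i$ this forces $i = j$. Thus $F = \bigsqcup_{i \in I} D_i$ decomposes the connected set $F$ into disjoint non-empty open sets, so $|I| = 1$; therefore $F \cap h(X)$, and hence $\ell^{-1}(z) = h^{-1}(F \cap h(X))$, is connected.

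The hardest step is this last connectedness argument. Once one recognises that forward-time orbits under $\psi_t$ trap each component of $F \cap h(X)$ inside itself, the backward translates by $(\psi_{-t})_{t \geq 0}$ assemble into disjoint connected subsets of $F$ whose union fills $F$ by the exhaustion \eqref{duesemigroup}, and connectedness of the cell $F$ then forces a single component.
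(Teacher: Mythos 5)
Your proposal is correct and follows essentially the same route as the paper: both realize the canonical semi-model via the Forn\ae ss--Sibony submersion $r$, obtain the submanifold structure from $\ell^{-1}(z)=h^{-1}(r^{-1}(z))$, and derive connectedness from the connectedness of the fiber $r^{-1}(z)$ together with the forward-invariance of components (via connectedness of continuous-time orbits) and the backward exhaustion $r^{-1}(z)=\bigcup_{t\geq 0}\psi_{-t}(r^{-1}(z)\cap h(X))$. The only cosmetic difference is that you decompose $F\cap h(X)$ into all its components and show the resulting open sets partition the cell $F$, whereas the paper runs the same argument by contradiction with a two-set partition.
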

\begin{proof}
Up to isomorphisms of semi-model, we can assume that the canonical Kobayashi semi-model for $(\phi_t)$ is given by Theorem \ref{canonicalsemigroup}, with $\ell=r \circ h$.

Since  $\tau(z)=z$, the set $\ell^{-1}(z)$ is clearly invariant under $(\phi_t)$.
We have  $\ell^{-1}(z)=h^{-1}(r^{-1}(z))$, and by Theorem \ref{aeris}.iii) the fiber $r^{-1}(z)$ is a connected $(q-k)$-dimensional complex submanifold of $\Omega$ such that $h(X)\cap r^{-1}(z)\neq\emptyset$. Thus $\ell^{-1}(z)$ is  a   complex submanifold  of $X$   of dimension $q-k$. Consider the semigroup of holomorphic self-maps $(g_t:= \phi_t|_{\ell^{-1}(z)}\colon \ell^{-1}(z)\to \ell^{-1}(z))$. Then $(r^{-1}(z),h|_{\ell^{-1}(z)}, \psi_t|_{r^{-1}(z)})$ is a model for $(g_t)$, and $$\bigcup_{t\geq 0}\psi_{-t}(h(\ell^{-1}(z)))=r^{-1}(z).$$  Assume by contradiction that $\ell^{-1}(z)$ is not connected. Then there exists a partition $\ell^{-1}(z)=U\cup V$ of $\ell^{-1}(z)$ in two  disjoint open nonempty subsets.  Since the $(\phi_t)$-orbit of every point is connected, $U$ and $V$ are  invariant by $(\phi_t)$. But then $$r^{-1}(z)=\bigcup_{t\geq 0}\psi_{-t}(h(U))\cup
\bigcup_{t\geq 0}\psi_{-t}(h(V))$$ is a partition of $r^{-1}(z)$ in two disjoint open nonempty subsets, which gives a contradiction.
\end{proof}

\begin{corollary}
Let $X$ be a complex manifold of dimension $q$ such that $X/{\sf Aut}(X)$ is compact. Let $(\phi_t)$ be a semigroup of holomorphic self-maps of $X$. Let $(Z, \ell, \tau)$ be  a canonical Kobayashi hyperbolic semi-model and assume  $\tau={\sf id}_Z$. Let $k:= \dim Z$.
Then $X$ admits a $(q-k)$-dimensional non-singular holomorphic  foliation $\mathcal{G}$ whose leaves are  $(\phi_t)$-invariant.
\end{corollary}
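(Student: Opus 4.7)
The plan is to realize the foliation $\mathcal{G}$ as the foliation by the connected fibers of $\ell\colon X\to Z$. Thus the two tasks are: (a) verify that $\ell$ is a holomorphic submersion so that its fibers actually define a non-singular holomorphic foliation of codimension $k$, and (b) verify that every fiber is $(\phi_t)$-invariant.

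For (b), since by hypothesis $\tau_t={\sf id}_Z$ for every $t\geq 0$, each point $z\in Z$ is a fixed point of each $\tau_t$. Applying Proposition \ref{invariante-fol} to every map $\phi_t$ of the semigroup shows that each fiber $\ell^{-1}(z)$ is a connected complex submanifold of $X$ of dimension $q-k$ which is invariant under $\phi_t$ for all $t\geq 0$.

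For (a), I would use that $(\phi_t)$ is a semigroup, so every $\phi_t$ is univalent, and thus $X$ itself is a $\phi_t$-absorbing domain on which $\phi_t$ is univalent. By the semigroup analogue of Theorems \ref{tifa} and \ref{canonicalsemigroup}, the canonical Kobayashi hyperbolic semi-model comes from a model $(\Omega, h, \psi_t)$ with $h=r\circ (\text{intertwining map})$ undone: set $\ell=r\circ h$. Lemma \ref{uni-abs}(2), applied with $B=X$, gives that $h\colon X\to\Omega$ is univalent. Since $\dim X=\dim\Omega=q$, a univalent holomorphic map between equidimensional complex manifolds is automatically a local biholomorphism (the Jacobian determinant cannot vanish on an open set without destroying injectivity), so $h$ has constant rank $q$ everywhere. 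By Theorem \ref{aeris}.i) the holomorphic submersion $r\colon\Omega\to Z$ has constant rank $k$. Hence $\ell=r\circ h$ has constant rank $k$ on all of $X$, i.e.\ it is a holomorphic submersion onto $Z$.

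Once (a) is in place, the fibers of $\ell$ are $(q-k)$-dimensional complex submanifolds of $X$ which locally, by the holomorphic submersion theorem, look like slices $\{z={\rm const}\}$ in a product chart. Thus they are the leaves of a non-singular holomorphic foliation $\mathcal{G}$ of $X$ of dimension $q-k$, whose leaves are precisely the sets $\ell^{-1}(z)$ described in (b), and which are therefore $(\phi_t)$-invariant.

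The main obstacle in this argument is step (a): the statement of Theorem \ref{aeris} supplies the submersion $r$ on $\Omega$, but one still has to guarantee that composing with $h$ preserves the submersion property, and this is where the univalence of $h$ (guaranteed in the semigroup case by Lemma \ref{uni-abs}(2)) combined with the equidimensionality of $X$ and $\Omega$ is essential.
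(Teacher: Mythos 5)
Your proof is correct and follows essentially the same route as the paper, which simply declares that the corollary ``follows directly from Proposition \ref{invariante-fol}, with the foliation $\mathcal{G}$ defined by the fibration $\ell\colon X\to Z$.'' Your step (a) — using the univalence of $h$ (valid here since semigroup elements are univalent on all of $X$) together with the constant rank of $r$ from Theorem \ref{aeris}.i) to see that $\ell=r\circ h$ is a submersion — is a detail the paper leaves implicit, and it is correctly filled in.
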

\begin{proof}
It follows directly from Proposition \ref{invariante-fol}, with  the foliation $\mathcal{G}$  defined by the fibration $\ell\colon X\to Z$.
\end{proof}

\subsection{Semigroups in the unit ball}

Recall the following result from \cite{AS}, see also \cite[Thm. 2.5.27]{A} and \cite[Thm. A.1]{BCD}
\begin{theorem}\label{fix-one}
Let $(\phi_t)$ be a semigroup of holomorphic self-maps of $\B^q$. Then
\begin{enumerate}
  \item if there exists $z_0\in \B^n$ such that $\phi_1(z_0)=z_0$ then there exists $z_1\in \B^n$ such that $\phi_t(z_1)=z_1$ for all $t\geq 0$ and $c(\phi_t)=0$ for all $t\geq 0$.
  \item If $\phi_1$ has Denjoy-Wolff point $p\in \de \B^q$ and dilation $e^{-\lambda}$ for some $\lambda\geq 0$, then $\phi_t$ has Denjoy-Wolff point $p\in \de \B^q$ and dilation $e^{-\lambda t}$ for all $t>0$.
\end{enumerate}
\end{theorem}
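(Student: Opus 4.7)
The plan is to work inside the fixed-point set of $\phi_1$ and exploit the fact that the restricted semigroup has period one. Let $F := \{z \in \B^q : \phi_1(z) = z\}$. Since $z_0 \in F$, the set $F$ is non-empty, and by Cartan's theorem on fixed-point sets in bounded domains $F$ is a connected complex submanifold; in the ball it is affine (after an automorphism of $\B^q$, the intersection with a linear subspace), hence biholomorphic to some $\B^k$ with $0 \leq k \leq q$. The commutation $\phi_t \circ \phi_1 = \phi_1 \circ \phi_t$ forces $\phi_t(F) \subseteq F$, so $(\phi_t|_F)_{t \geq 0}$ is a continuous semigroup on $F \cong \B^k$ satisfying $\phi_1|_F = {\sf id}_F$. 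The identity $\phi_t|_F \circ \phi_{1-t}|_F = {\sf id}_F$ shows each $\phi_t|_F$ is an automorphism, and $(\phi_t|_F)$ extends to a continuous periodic one-parameter subgroup of $\Aut(\B^k)$ of period one. Its closure is a compact subgroup of $\Aut(\B^k)$, necessarily conjugate to a subgroup of the isotropy group $U(k)$ of some interior point, and hence admits a common fixed point $z_1 \in F$. Then $\phi_t(z_1) = z_1$ for all $t \geq 0$, and $c(\phi_t) = 0$ follows from Remark \ref{c-elliptic}.

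\textbf{Part (2), absence of interior fixed points and Denjoy-Wolff point.} The very same argument, with $1$ replaced by any $s > 0$, shows that an interior fixed point of $\phi_s$ would produce a common fixed point of the whole semigroup, in particular of $\phi_1$, contradicting the hypothesis that $\phi_1$ has Denjoy-Wolff point in $\de \B^q$. Hence every $\phi_t$, $t > 0$, is non-elliptic and admits a Denjoy-Wolff point $q_t \in \de \B^q$ together with a dilation $\lambda_{\phi_t} \in (0,1]$. Since $\phi_t$ and $\phi_1$ commute and neither has an interior fixed point, the theorem of Abate on commuting holomorphic self-maps of $\B^q$ without interior fixed points yields $q_t = p$.

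\textbf{Part (2), dilation and main obstacle.} By Proposition \ref{uguale} applied to $\phi_t$ we have $c(\phi_t) = -\log \lambda_{\phi_t}$; by Lemma \ref{coeff-semi}, $c(\phi_t) = t \cdot c(\phi_1) = t\lambda$. Combining these identities gives $\lambda_{\phi_t} = e^{-t\lambda}$, as required. The most delicate point in the whole argument is the identification $q_t = p$: an element $\phi_s$ need not extend continuously to $\de \B^q$, and its Denjoy-Wolff point is defined only through iteration, so the equality does not follow mechanically from the semigroup law. The cleanest route is Abate's commuting-maps theorem; an alternative would be a direct horosphere argument via Julia's lemma, using the decomposition $\phi_t^n(z) = \phi_{nt - \lfloor nt \rfloor}(\phi_1^{\lfloor nt \rfloor}(z))$ together with the Kobayashi-contractivity of the compact family $\{\phi_s : s \in [0,1]\}$ to transport the convergence $\phi_1^m(z) \to p$ to $\phi_t^n(z) \to p$.
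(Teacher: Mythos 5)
The paper offers no proof of Theorem \ref{fix-one}: it is recalled from Aizenberg--Shoikhet \cite{AS} and \cite[Thm. 2.5.27]{A}, so there is no internal argument to compare yours against. Judged on its own, your part (1) is correct: ${\rm Fix}(\phi_1)$ is a holomorphic retract of $\B^q$, hence an affine slice biholomorphic to some $\B^k$, invariant under every $\phi_t$ by commutativity; the relations $\phi_t|_F\circ\phi_{1-t}|_F=\phi_{1-t}|_F\circ\phi_t|_F={\sf id}_F$ make the restriction a periodic one-parameter group whose compact image in $\Aut(\B^k)$ has a common fixed point; and $c(\phi_t)=0$ then follows from Remark \ref{c-elliptic}. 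Likewise, your exclusion of interior fixed points for every $\phi_s$, $s>0$, and your dilation computation via Proposition \ref{uguale} and Lemma \ref{coeff-semi} are exactly the observations the paper itself records in the Remark following the theorem.

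The genuine soft spot is the identification $q_t=p$ of the Denjoy--Wolff points. There is no commuting-maps theorem for $\B^q$ that you can invoke in the required generality: already in the disc, Behan's theorem admits the exception of commuting hyperbolic automorphisms with swapped fixed points (think of $\phi_1$ and $\phi_1^{-1}$ inside a group of automorphisms), so any such statement needs hypotheses that must themselves be verified using the semigroup law; and for two commuting \emph{parabolic} self-maps of $\B^q$ a Behan-type theorem is not available at all, so the case $\lambda=0$ cannot be handled this way. The ``alternative'' you relegate to a closing remark is therefore not optional --- it is the proof. It closes easily: writing $m=\lfloor nt\rfloor$ and $s=nt-m\in[0,1)$, the semigroup law and the Kobayashi contractivity of $\phi_1^m$ give
$$k_{\B^q}\bigl(\phi_t^n(z),\phi_1^{m}(z)\bigr)=k_{\B^q}\bigl(\phi_1^{m}(\phi_{s}(z)),\phi_1^{m}(z)\bigr)\le \sup_{u\in[0,1]}k_{\B^q}(\phi_u(z),z)=:C<\infty,$$
with $C$ finite by continuity of $u\mapsto\phi_u(z)$ on the compact interval $[0,1]$. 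Since $\phi_1^{m}(z)\to p$ and Kobayashi balls of fixed radius $C$ centred at points tending to $\de\B^q$ have Euclidean diameter tending to zero, we get $\phi_t^n(z)\to p$; as $\phi_t$ is non-elliptic, the whole sequence of its iterates converges to its Denjoy--Wolff point, whence $q_t=p$. With that substitution your proof is complete.
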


\begin{definition}
A semigroup $(\phi_t)$  of holomorphic self-maps of $\B^q$  is {\sl elliptic} (resp. {\sl hyperbolic}, resp. {\sl parabolic}) provided $\phi_1$ is elliptic (resp. hyperbolic, resp. parabolic) in $\B^q$. As customary, in case $(\phi_t)$ is not elliptic, and $\phi_1$ has Denjoy-Wolff point $p\in \de \B^q$ and dilation $e^{-\lambda}$, we call $p$ the {\sl Denjoy-Wolff point of $(\phi_t)$} and $(e^{-\lambda t})$ the {\sl dilation} of $(\phi_t)$.
\end{definition}

\begin{remark}
Let $(\phi_t)$ be a non-elliptic semigroup of holomorphic self-maps of $\B^q$ and for $t>0$ let $\lambda_t$ denote the dilation of $\phi_t$ at the Denjoy-Wolff point $p\in \de \B^q$. By Lemma \ref{coeff-semi} and Proposition \ref{uguale}, it follows directly that $\lambda_t=e^{-c(\phi_t)}=e^{-t \lambda}$ where $\lambda=c(\phi_1)\geq 0$.
\end{remark}

A canonical Kobayashi hyperbolic semi-model $(\B^k, \ell, \tau_t)$ for a semigroup $(\phi_t)$ of holomorphic self-maps of $\B^q$ is called {\sl trivial}, of {\sl elliptic} type, of {\sl hyperbolic} type or of {\sl parabolic} type provided $(\B^k, \ell, \tau_1)$ is trivial, of elliptic type, of hyperbolic type or of parabolic type.

The results in Section \ref{sec-ball} can be easily adapted to the semigroup case. We state here without proof a general result:

\begin{theorem}
Let $(\phi_t)$ be a semigroup of holomorphic self-maps of $\B^q$.
\begin{enumerate}
  \item If $(\phi_t)$ is elliptic then it admits either a trivial or a  canonical Kobayashi hyperbolic semi-model of elliptic type. The latter case happens if and only if there exists a complex affine submanifold $V$ in $\C^q$ such that $W:=V\cap \B^q$ is invariant under $(\phi_t)$ and   $\phi_t|_W \in {\sf Aut}(W)$ for all $t\geq 0$.
  \item If $(\phi_t)$ is hyperbolic with Denjoy-Wolff point $a\in \de \B^q$ and dilation $(e^{-\lambda t})$, $\lambda>0$, then it admits a canonical  Kobayashi hyperbolic semi-model $(\B^k, \ell, \tau)$, $1\leq k\leq q$ of hyperbolic type  with  dilation $(e^{-\lambda t})$. Moreover, if $b\in \de \B^k$ is the Denjoy-Wolff point of $(\tau_t)$, then $K\hbox{-}\lim_{z\to a}\ell(z)=b$.
  \item If $(\phi_t)$ is parabolic with Denjoy-Wolff point $a\in \de \B^q$ then it admits either a trivial, or of elliptic type, or of parabolic type canonical Kobayashi semi-model. The latter case happens if and only if $s(\phi_1(z))>0$ for all $z\in \B^n$. Moreover, in such a case, if $(\B^k, \ell, \tau_t)$ is a canonical Kobayashi hyperbolic semi-model of parabolic type and $b\in \de \B^k$ is the Denjoy-Wolff point of $(\tau_t)$, then $\angle_K\lim_{z\to a}\ell(z)=b$.
\end{enumerate}
\end{theorem}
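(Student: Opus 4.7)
The plan is to invoke Theorem \ref{canonicalsemigroup} applied to $X=\B^q$ (note $\B^q/\Aut(\B^q)$ is a single point, hence trivially compact): this furnishes simultaneously, for every $t\geq 0$, the canonical Kobayashi hyperbolic semi-model of $\phi_t$ on one common base space $Z$, with a one-parameter group $(\tau_t)\subset\Aut(Z)$ intertwining all the $\phi_t$. By Theorem \ref{aeris}(iv), $Z$ is biholomorphic to $\B^k$ for some $0\leq k\leq q$. Once this scaffolding is in place, each of the three cases reduces to the corresponding discrete-time statement applied to $\phi_1$, combined with the fact proved in Theorem \ref{fix-one} that the type and Denjoy-Wolff data of $\phi_t$ are independent of $t>0$.

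For \textbf{(1)}, by Theorem \ref{fix-one}(1) the map $\phi_1$ has an interior fixed point, so $\{\phi_1^n\}$ is not compactly divergent and Proposition \ref{notdivergent} identifies its canonical semi-model as $(M,\rho,\phi_1|_M)$, where $M$ is the limit manifold and $\rho$ the canonical retraction. By uniqueness (Proposition \ref{unico-semimodel}) the triple $(Z,\ell,\tau_1)$ is isomorphic to $(M,\rho,\phi_1|_M)$. Since every holomorphic retract of $\B^q$ is the intersection of $\B^q$ with a complex affine subspace (a classical result, see \cite{Ru}), $M=V\cap \B^q$ for some affine $V\subset\C^q$, yielding the trivial/elliptic alternative according to whether $M$ is a point or not; moreover the transported group action gives $\phi_t|_M\in\Aut(M)$ for all $t$. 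Conversely, if an affine $V$ exists with $W=V\cap\B^q$ invariant and $\phi_t|_W\in\Aut(W)$, then $W$ is contained in the limit manifold of $\phi_1$ (any $\phi_1$-invariant domain on which $\phi_1$ acts as an automorphism must sit inside $M$), so $M$ is nontrivial and the canonical semi-model is of elliptic type.

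For \textbf{(2)}, by Theorem \ref{fix-one}(2) every $\phi_t$ with $t>0$ is hyperbolic with Denjoy-Wolff point $a$ and dilation $e^{-\lambda t}$. Applying Theorem \ref{iper-modello} to $\phi_t$ yields that $(Z,\ell,\tau_t)$ is of hyperbolic type with dilation $e^{-\lambda t}$; in particular $k\geq 1$, each $\tau_t$ is a hyperbolic automorphism of $\B^k$, and the case $t=1$ supplies $K\hbox{-}\lim_{z\to a}\ell(z)=b$ with $b$ the Denjoy-Wolff point of $\tau_1$. Since the $\tau_t$ commute, each $\tau_t$ must permute the two-element fixed-point set of $\tau_1$ on $\de\B^k$; continuity of $t\mapsto\tau_t$ and the connectedness of $\R^+$ then force every $\tau_t$ to fix both boundary points individually, so $b$ is the common Denjoy-Wolff point of the whole group.

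For \textbf{(3)}, by Theorem \ref{fix-one}(2) every $\phi_t$ is parabolic with Denjoy-Wolff point $a$. Applying Theorem \ref{dicotomia} to $\phi_1$ yields the three-way classification: either there exists $z_0$ with $s(z_0)=0$ and $(Z,\ell,\tau_1)$ is trivial or of elliptic type, or $s(z)>0$ for every $z$ and $(Z,\ell,\tau_1)$ is of parabolic type with $\angle_K\lim_{z\to a}\ell(z)=b$. The criterion in the statement matches this because the defining limit $s(z)=\lim_{n\to\infty} k_{\B^q}(\phi_1^n(z),\phi_1^{n+1}(z))$ is shift-invariant, giving $s(\phi_1(z))=s(z)$. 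In the nontrivial parabolic sub-case, Corollary \ref{diminuisco} rules out hyperbolic $\tau_t$ (otherwise $\phi_t$ would be hyperbolic), and the absence of an interior fixed point for $(\tau_t)$---which would pull back to interior fixed points of every $\phi_t$, contradicting parabolicity---rules out the elliptic alternative, forcing each $\tau_t$ to be parabolic. The only genuinely subtle step, concentrated in case (2), is the matching of the Denjoy-Wolff points of all the $\tau_t$; elsewhere the argument simply transports the discrete-time theorems through the semigroup scaffolding of Theorem \ref{canonicalsemigroup}.
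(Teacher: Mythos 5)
The paper states this theorem explicitly without proof (``The results in Section \ref{sec-ball} can be easily adapted to the semigroup case. We state here without proof a general result''), and your proposal is precisely the intended adaptation: set up the common base space and group $(\tau_t)$ via Theorem \ref{canonicalsemigroup} (with $\B^q/{\sf Aut}(\B^q)$ compact by transitivity), use Theorem \ref{fix-one} to see that the type and Denjoy--Wolff data of $\phi_t$ do not depend on $t$, and then quote the discrete-time results (Proposition \ref{notdivergent}, Theorem \ref{iper-modello}, Theorem \ref{dicotomia}) for $\phi_1$, noting that the type of the semigroup semi-model is by definition the type of $\tau_1$. This is correct in structure and in substance; the observation $s(\phi_1(z))=s(z)$ and the identification of holomorphic retracts of $\B^q$ with affine slices are exactly the right bridging remarks.

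Two points deserve a caveat. First, in the converse direction of (1), the parenthetical claim that ``any $\phi_1$-invariant domain on which $\phi_1$ acts as an automorphism must sit inside $M$'' is the one genuinely nontrivial step and you do not justify it: the correct argument is that $W=V\cap\B^q$ is closed in $\B^q$ and biholomorphic to a ball, so $\rho|_W=\lim\phi_1^{m_n}|_W$ is a non--compactly-divergent limit of automorphisms of $W$, hence (Cartan) an automorphism of $W$, and being idempotent it is ${\sf id}_W$; thus $W\subset M$ and $M$ has positive dimension. Second, in (3) your remark that an interior fixed point of $\tau_t$ ``would pull back to interior fixed points of every $\phi_t$'' is false: by Proposition \ref{puntifissi} a fixed point of $\tau$ corresponds to a point $x$ with $s(x)=0$, not to a fixed point of $\phi_t$ (the intertwining map $\ell$ is neither injective nor proper). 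This slip is harmless here because the conclusion you want for $\tau_1$ is already contained in Theorem \ref{dicotomia}, and the statement's notion of type only involves $\tau_1$, but the sentence as written should be deleted or replaced by an appeal to Proposition \ref{puntifissi}.
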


In particular, for   hyperbolic semigroups of the unit ball it is always possible to solve the Valiron equation:

\begin{corollary}
Let $f\colon \B^q\to \B^q$ be a hyperbolic semigroup of holomorphic self-maps with Denjoy-Wolff point $a\in \de \B^q$ and dilation  $(e^{-\lambda t})$ for some $\lambda>0$. Then the {Valiron equation}
\begin{equation}\label{Val-semi}
\Theta\circ f_t=e^{\lambda t}\Theta,\quad t\geq0
\end{equation}
has a holomorphic solution $\Theta\colon \B^q\to\H$ such that $K\hbox{-}\lim_{z\to a} \Theta(z)=\infty$ and
\begin{equation}\label{riempi-semi}
\bigcup_{t\geq 0} e^{-\lambda t}\Theta(\B^q)=\H.
\end{equation}
Moreover, any holomorphic solution $\Theta\colon \H^q\to \H$ of \eqref{Val-semi} satisfies \eqref{riempi-semi}.
\end{corollary}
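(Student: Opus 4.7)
The plan is to mimic the argument of Corollary \ref{valiron}, replacing the single hyperbolic map with the whole semigroup and using the semigroup version of the canonical Kobayashi hyperbolic semi-model. First I would apply the main theorem on semigroups just above to obtain a canonical Kobayashi hyperbolic semi-model $(\B^k, \ell, \tau_t)$, $1\leq k\leq q$, of hyperbolic type with dilation $(e^{-\lambda t})$, and with $K\hbox{-}\lim_{z\to a}\ell(z)=b$, where $b\in\de\B^k$ is the common Denjoy-Wolff point of the group $(\tau_t)$. Since $(\tau_t)$ is a one-parameter group of hyperbolic automorphisms of $\B^k$ sharing the two boundary fixed points and having the prescribed dilation, I would use the Cayley-type identification of Remark \ref{esplicito}: there is a biholomorphism $\gamma\colon\B^k\to\H^k$ sending $b$ to $\infty$, together with a strongly continuous one-parameter group $(U_t)$ of unitary $(k-1)\times(k-1)$ matrices, such that
\[
\varphi_t(z):=\gamma\circ\tau_t\circ\gamma^{-1}(z)=\bigl(e^{\lambda t}z_1,\;e^{\lambda t/2}U_t(z')\bigr).
\]
(The existence of $(U_t)$ as a group follows from applying Remark \ref{esplicito} to $\tau_1$ to fix the form, and then from the continuity of $t\mapsto\tau_t$ together with the uniqueness of the $(z_1,z')$-splitting imposed by the common pair of boundary fixed points.)

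Next I would define
\[
\Theta:=\pi_1\circ\gamma\circ\ell\colon\B^q\longrightarrow\H,
\]
where $\pi_1(z_1,z')=z_1$ is the projection onto the first coordinate of $\H^k$. Applying $\pi_1\circ\gamma$ to the semi-model intertwining relation $\ell\circ\phi_t=\tau_t\circ\ell$ gives $\Theta\circ\phi_t=e^{\lambda t}\Theta$ for every $t\geq0$, which is the Valiron equation \eqref{Val-semi}. The $K$-limit at $a$ follows from $K\hbox{-}\lim_{z\to a}\ell(z)=b$, because $\gamma(b)=\infty\in\de\H^k$ and hence $\pi_1\circ\gamma(\ell(z))\to\infty$ in $\H$ along Kor\'anyi regions at $a$. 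For the filling property \eqref{riempi-semi}, I would use that $(\B^k,\ell,\tau_t)$ being a semi-model means $\bigcup_{t\geq0}\tau_{-t}(\ell(\B^q))=\B^k$, so applying $\gamma$ gives $\bigcup_{t\geq0}\varphi_{-t}(\gamma(\ell(\B^q)))=\H^k$, and then projecting by $\pi_1$ yields
\[
\bigcup_{t\geq0}e^{-\lambda t}\Theta(\B^q)=\pi_1(\H^k)=\H.
\]

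For the \emph{moreover} part, let $\Theta\colon\B^q\to\H$ be any holomorphic solution of \eqref{Val-semi}, and set $\Omega':=\bigcup_{t\geq0}e^{-\lambda t}\Theta(\B^q)\subset\H$. Since $\Theta$ intertwines $(\phi_t)$ with the group $(w\mapsto e^{\lambda t}w)$, the union $\Omega'$ is open and invariant under this group, so $(\Omega',\Theta, w\mapsto e^{\lambda t}w)$ is a Kobayashi hyperbolic semi-model for $(\phi_t)$. By the universal property in Theorem \ref{canonicalsemigroup}, there exists a surjective holomorphic map $\sigma\colon\B^k\to\Omega'$ with $\Theta=\sigma\circ\ell$ and $\sigma\circ\tau_t(\cdot)=e^{\lambda t}\sigma(\cdot)$ for all $t\geq 0$. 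Passing to the Siegel picture $\sigma\circ\gamma^{-1}\colon\H^k\to\H$ and restricting to the slice $\{z'=0\}$, the function $\zeta\mapsto(\sigma\circ\gamma^{-1})(\zeta,0)$ satisfies $(\sigma\circ\gamma^{-1})(e^{\lambda t}\zeta,0)=e^{\lambda t}(\sigma\circ\gamma^{-1})(\zeta,0)$ for every $t\geq 0$; exactly as in Corollary \ref{valiron}, the Heins theorem \cite{heins} applied to this homogeneous map of $\H$ forces $(\sigma\circ\gamma^{-1})(\zeta,0)=c\zeta$ for some $c>0$, so $\Omega'\supseteq\sigma(\B^k)\supseteq\H$, giving $\Omega'=\H$.

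The main obstacle I anticipate is step 2, namely verifying that the \emph{entire} one-parameter group $(\tau_t)$ can be normalized simultaneously to the form above with a continuous unitary group $(U_t)$. This is essentially a matter of observing that once the two common boundary fixed points of $(\tau_t)$ are sent to $0$ and $\infty$ by $\gamma$, the action on the $z_1$-coordinate must be the dilation by $e^{\lambda t}$ (this is the content of Lemma \ref{coeff-semi} combined with Theorem \ref{fix-one}), and the action transverse to it is automatically of the unitary form $e^{\lambda t/2}U_t$ by preservation of the Siegel geometry; the continuity of $t\mapsto U_t$ then comes for free from that of $(\tau_t)$. The remaining parts are then straightforward formal consequences of the semigroup version of the canonical model.
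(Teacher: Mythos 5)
Your proposal is correct and is essentially the proof the paper intends: the corollary is left as the semigroup adaptation of Corollary \ref{valiron}, and you carry out exactly that adaptation (canonical Kobayashi hyperbolic semi-model of hyperbolic type, Siegel normalization as in Remark \ref{esplicito}, projection $\pi_1$, and Heins' theorem on the slice $\{z'=0\}$ for the ``moreover'' part). The normalization of the whole group $(\tau_t)$ that you flag as the main obstacle is indeed the only point needing care, and your argument for it (common pair of boundary fixed points plus the dilation $e^{-\lambda t}$ forcing the diagonal form) is the standard and correct one.
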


It is interesting to note that, contrarily to the case of univalent self-mappings (see Proposition \ref{controes}), a hyperbolic semigroup of $\B^q$ does not always admit a Kobayashi hyperbolic semi-model with base space $\B^k$ and of parabolic type.
In order to show that, we need a preliminary lemma.

\begin{lemma}\label{inj}
 Let $c,d>0$ and $\varepsilon =\pm 1$.
\begin{enumerate}
\item If $\sigma\in \Hol(\H,\H)$ is such that $$\sigma(e^{ct}z)=\sigma(z)+ \varepsilon t,\quad \forall t\geq 0,$$  then $\sigma$ is univalent.
\item If $\sigma\in \Hol(\H,\H)$ is such that $$\sigma(e^{ct}z)=e^{dt}\sigma(z),\quad \forall t\geq 0,$$ then $\sigma$ is univalent.
\end{enumerate}
\end{lemma}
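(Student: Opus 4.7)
The strategy for both parts is to convert the functional equation into a first-order ODE by differentiating in $t$ at $t=0$, then integrate explicitly on the simply connected domain $\H$ and read off univalence from the resulting formula. The differentiation is legitimate because $\sigma$ is holomorphic, so for each fixed $z\in\H$ the map $t\mapsto \sigma(e^{ct}z)$ is smooth in $t$ (one may also first extend the functional equation to all $t\in\R$ by applying it at time $s>0$ to the point $e^{-cs}z\in\H$). Applying the chain rule at $t=0$ yields $cz\,\sigma'(z)=\varepsilon$ in case (1) and $cz\,\sigma'(z)=d\,\sigma(z)$ in case (2).

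In case (1), the ODE $\sigma'(z)=\varepsilon/(cz)$ integrates on $\H$ to $\sigma(z)=(\varepsilon/c)\log z + A$ for some constant $A\in\C$, where $\log$ is the principal branch on $\H$. Since $\log$ is a biholomorphism of $\H$ onto the strip $\{w\in\C:0<\Im w<\pi\}$ and $\varepsilon/c\neq 0$, the map $\sigma$ is univalent.

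In case (2), the hypothesis $\sigma(\H)\subset\H$ forces $\sigma$ to be nowhere zero on $\H$, so the logarithmic derivative $\sigma'/\sigma=d/(cz)$ integrates on the simply connected domain $\H$ to $\sigma(z)=B z^{d/c}$, where $z^{d/c}:=\exp((d/c)\log z)$ with the principal branch of $\log$, and $B\in\C^{\ast}$. The map $z\mapsto (d/c)\log z$ sends $\H$ biholomorphically onto the strip $\{w\in\C:0<\Im w<(d/c)\pi\}$, and the exponential is injective on any horizontal strip of width at most $2\pi$. Hence univalence of $\sigma$ follows once $(d/c)\pi\leq 2\pi$, which is in turn a consequence of $\sigma(\H)\subset\H$: the image $\{B z^{d/c}:z\in\H\}$ is the rotation by $\arg B$ of an open sector at the origin of angular width $(d/c)\pi$, and for such a sector to fit inside the half-plane $\H$ of angular width $\pi$ one must even have $(d/c)\pi\leq\pi$.

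The only subtle point---rather than a genuine obstacle---is the passage in case (2) from the local statement $\sigma'\neq 0$ (immediate from the ODE) to global univalence, which is handled by observing that the image constraint $\sigma(\H)\subset\H$ automatically bounds the exponent $d/c$, placing $(d/c)\log z$ inside a strip on which $\exp$ remains injective.
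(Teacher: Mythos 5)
Your proof is correct, but it takes a genuinely different route from the paper's. You differentiate the semigroup relation in $t$ at $t=0$ to get the ODEs $cz\,\sigma'(z)=\varepsilon$ and $cz\,\sigma'(z)=d\,\sigma(z)$, integrate them on the simply connected domain $\H$, and read off univalence from the explicit formulas $\sigma(z)=(\varepsilon/c)\log z+A$ and $\sigma(z)=Bz^{d/c}$ (with the constraint $\sigma(\H)\subset\H$ forcing $d/c\leq 1$ in the second case, so that $\exp$ stays injective on the relevant strip). The paper instead argues softly: the functional equation (extended to all $t\in\R$) forces $\Im\sigma$ to be constant on each ray $L_\theta=\{re^{i\theta}:r>0\}$ and $\sigma$ to be injective on each such ray; if $\sigma(w_0)=\sigma(w_1)$ with $w_0,w_1$ on distinct rays, then $\Im\sigma$ takes the same value on the two bounding rays of an annular sector $Q$, and since $\Im\sigma$ is harmonic, nonconstant, and radially constant, it would attain an interior extremum on $Q$, contradicting the maximum principle. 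Your approach buys more — a complete classification of the intertwining maps $\sigma$, in the spirit of the Heins rigidity result the paper invokes elsewhere — at the price of the explicit integration; the paper's argument avoids solving anything and rests only on the maximum principle, which makes it look more robust but yields only the qualitative conclusion. Both arguments need, and both supply, the extension of the functional equation from $t\geq 0$ to all $t\in\R$; you handle this correctly by applying the identity at $e^{-cs}z$.
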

\begin{proof}
(1) Let $\theta\in (0,\pi)$ and let $L_\theta:=\{re^{i\theta}: r>0\}$. For $p>0$ let $H_p:=\{r+ip: r\in \R\}$. Then $\sigma(L_\theta)=H_{\Im \sigma (e^{i\theta})}$ and $\sigma$ is univalent on $L_\theta$. Assume there exists $w_0\neq w_1\in \H$ such that $\sigma(w_0)=\sigma(w_1):=a+ib$. Then $w_0\in L_{\theta_0}$ and $w_1\in L_{\theta_1}$ for some $\theta_0< \theta_1\in (0,\pi)$ and $\Im \sigma(L_{\theta_0})=\Im \sigma(L_{\theta_1})=\{a\}$. Let $Q:=\{z\in \H: z=re^{i\theta}, r\in [1,2], \theta\in [\theta_0,\theta_1]\}$. The function $Q\ni w\mapsto \Im \sigma(w)$ is harmonic in the interior of $Q$ and continuous on $Q$. Moreover, since $\sigma$ is not constant, $\Im \sigma$ cannot be constant in $Q$. Then $\Im \sigma$ has a maximum or a minimum (or both) different from $a$. Let $w'=r'e^{i\theta'}\in Q$ be such a point, with $\theta'\in (\theta_0,\theta_1)$, $r'\in [1,2]$. Since $\Im\sigma (r'e^{i\theta'})$ does not depend on $r'$, it follows that $\Im \sigma (\frac{3}{2}e^{i\theta'})$ is ether   a maximum or a minimum in the interior of $Q$, a contradiction. Thus $\sigma$ is univalent.

The proof of (2) is similar.
\end{proof}

\begin{proposition}
Let $(\phi_t)$ be a semigroup of holomorphic self-maps of the unit disc $\D$, and let $(\D, \eta,\varphi_t)$ be a Kobayashi hyperbolic semi-model for $(\phi_t)$. If  $(\phi_t)$ is elliptic (resp. parabolic, resp.  hyperbolic with dilation $(e^{-\lambda })$ for some $\lambda>0$), then $(\varphi_t)$ is elliptic (resp. parabolic, resp.  hyperbolic with dilation $(e^{-\lambda })$).
\end{proposition}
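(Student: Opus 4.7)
\emph{Plan.} The idea is to reduce to Proposition~\ref{cara-semi} wherever possible and to dispose of the residual cases by combining the continuous semigroup structure with the universal property of the canonical Kobayashi hyperbolic semi-model.

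First I would handle the elliptic case: if $(\phi_t)$ is elliptic, Theorem~\ref{fix-one}(1) furnishes a common fixed point $z_1\in\D$ with $\phi_t(z_1)=z_1$ for all $t\ge 0$, so $\eta(z_1)$ is fixed by every $\varphi_t$ and $(\varphi_t)$ is elliptic. Conversely, if $(\phi_t)$ is non-elliptic and $(\varphi_t)$ were elliptic with common fixed point $z_0\in\D$, then $\eta$ would be nonconstant (a constant $\eta$ would make $\bigcup_{t\ge 0}\varphi_{-t}(\eta(\D))$ a single $(\varphi_t)$-orbit, which cannot equal $\D$), so $\eta^{-1}(z_0)$ would be a discrete analytic subset of $\D$. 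The semi-model identity forces $z_0\in\eta(\D)$, and for any $w\in\eta^{-1}(z_0)$ the continuous curve $\{\phi_t(w)\}_{t\ge 0}$ would lie inside this discrete set, hence be constant, contradicting the absence of interior fixed points of $(\phi_t)$. Together with Proposition~\ref{cara-semi}(2), this disposes of the parabolic case.

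The remaining case---$(\phi_t)$ hyperbolic with dilation $(e^{-\lambda t})$, $\lambda>0$---is where I expect the main obstacle. I would let $(\D,\ell,\tau_t)$ be the canonical Kobayashi hyperbolic semi-model of $(\phi_t)$ obtained from Theorem~\ref{canonicalsemigroup} applied to $X=\D$ (whose quotient by $\Aut(\D)$ is a point). Since $(\D,\ell,\tau_1)$ is the canonical Kobayashi hyperbolic semi-model of the hyperbolic univalent map $\phi_1$, Lemma~\ref{usare} combined with Proposition~\ref{uguale} gives that $\tau_1$ is hyperbolic with dilation $e^{-\lambda}$, hence $(\tau_t)$ is hyperbolic with dilation $(e^{-\lambda t})$. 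After conjugating the base to $\H$ so that $\tau_t$ becomes $w\mapsto e^{\lambda t}w$, the universal property together with Lemma~\ref{morf-su} yields a surjective holomorphic morphism $\sigma\colon\H\to\D$ with $\sigma(e^{\lambda t}w)=\varphi_t(\sigma(w))$ for all $t\in\R$.

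To finish the proof I would rule out the two remaining sub-cases. If $(\varphi_t)$ were parabolic, I would conjugate it via a biholomorphism $\alpha\colon\D\to\H$ to $w\mapsto w+t$ and set $\tilde\sigma:=\alpha\circ\sigma\colon\H\to\H$; then $\tilde\sigma$ is surjective and satisfies $\tilde\sigma(e^{\lambda t}w)=\tilde\sigma(w)+t$. Setting $F(\zeta):=\tilde\sigma(e^\zeta)-\zeta/\lambda$ on the strip $\{0<\Im\zeta<\pi\}$, one checks $F(\zeta+\lambda t)=F(\zeta)$ for all $t\in\R$, so $F$ is constant; hence $\tilde\sigma(w)=(\log w)/\lambda + C$, whose image is a horizontal strip of width $\pi/\lambda<\infty$, contradicting surjectivity. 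If instead $(\varphi_t)$ is hyperbolic with dilation $(e^{-\mu t})$, the analogous $\tilde\sigma$ is nowhere zero (a zero $w_0$ would force $\tilde\sigma$ to vanish along the continuous curve $\{e^{\lambda t}w_0\}_{t\in\R}$, hence identically, contradicting surjectivity), so a branch of $\log\tilde\sigma$ exists and the same periodicity argument produces $\tilde\sigma(w)=c\,w^{\mu/\lambda}$, whose image is a sector of opening $(\mu/\lambda)\pi$. Surjectivity onto $\H$ then forces $\mu=\lambda$ (and $c>0$). The hardest step is precisely this last one, turning the continuous semigroup identity into a closed-form solution of the functional equation on $\H$.
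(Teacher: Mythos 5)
Your proof is correct, and the overall skeleton (elliptic case by pushing forward a common fixed point; ruling out elliptic targets in the non-elliptic cases via an identity-theorem/discreteness argument; reducing to a surjective morphism $\sigma$ from the canonical model via the universal property) matches the paper's. Where you genuinely diverge is the key step in the hyperbolic case. The paper proves a separate univalence lemma (Lemma~\ref{inj}): any $\sigma\in\Hol(\H,\H)$ intertwining $z\mapsto e^{ct}z$ with a non-elliptic one-parameter group is injective, proved by a maximum-principle argument for the harmonic function $\Im\sigma$ on rays; combined with surjectivity this makes $\sigma$ an automorphism of $\H$, and conjugate automorphism groups must have the same type and dilation. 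You instead solve the intertwining functional equation in closed form: passing to the strip via $\zeta\mapsto e^{\zeta}$ and subtracting the obvious particular solution, the semigroup relation makes the difference invariant under all real translations, hence constant, so $\tilde\sigma$ is forced to be $(\log w)/\lambda+C$ or $c\,w^{\mu/\lambda}$, whose images (a strip of finite height, resp.\ a sector of opening $\mu\pi/\lambda$) contradict surjectivity unless $\mu=\lambda$. Both arguments rest ultimately on surjectivity of $\sigma$; yours is more computational and yields the explicit form of $\sigma$ (incidentally recovering Heins-type rigidity directly), while the paper's is softer and reusable as a standalone lemma. Two cosmetic remarks: the non-vanishing of $\tilde\sigma$ in your hyperbolic sub-case is automatic since $\tilde\sigma$ takes values in $\H\not\ni 0$, so the orbit argument there is unnecessary; and one should note (as you do) that the intertwining relation extends to all $t\in\R$ because $(\varphi_t)$ is a group, which is what makes the translation-invariance argument close.
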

\begin{proof}
If $(\phi_t)$ is elliptic, all $\phi_t$'s share a common   fixed point $z_0\in \D$. Then $\eta(z_0)$ is fixed by $\varphi_t$ for all $t\geq 0$, hence $(\varphi_t)$ is elliptic.

Assume that $(\phi_t)$ is hyperbolic with dilation $(e^{\lambda t})$.
Let  $(\H, h, z\mapsto e^{\lambda t}z)$ be a canonical Kobayashi hyperbolic model for $(\phi_t)$. Then there exists a morphism of models $\hat \sigma\colon (\H, h, z\mapsto e^{ct}z)\to (\D, \eta,\varphi_t)$.
The  group of automorphisms $(\v_t)$ cannot be elliptic, since $\sigma$ cannot map a half-line to a point. Since $\sigma\colon \H\to \D$ is surjective, Lemma \ref{inj} shows that $(\v_t)$ is hyperbolic with dilation $(e^{\lambda t})$.

Finally suppose $(\phi_t)$ is parabolic. Then $(\v_t)$ cannot be hyperbolic since we have $c(\phi_t)\geq c(\v_t)$ for all $t\geq 0$ by Lemma \ref{c}.  Let  $(\H, h, z\mapsto z + \varepsilon t)$ be a canonical Kobayashi hyperbolic model for $(\phi_t)$, where  $\varepsilon=\pm 1$. Then there exists a morphism of models $\hat \sigma\colon (\H, h, z\mapsto z + \varepsilon t)\to (\D, \eta,\varphi_t)$.
The group of automorphisms $(\v_t)$ cannot be elliptic, since $\sigma$ cannot map a horizontal line to a point.
\end{proof}

\bibliographystyle{amsplain}

\end{document}